\theoremstyle{plain}
\newtheorem{theorem}{Theorem}
\newtheorem{proposition}{Proposition}
\newtheorem{corollary}{Corollary}
\newtheorem{lemma}{Lemma}
\theoremstyle{definition}
\newtheorem{definition}{Definition}
\newtheorem{remark}{Remark}
\newtheorem{example}{Example}
\def\cat{\mathbf}
\newcommand{\N}{\mathbb{N}}
\newcommand{\rel}{\cat{Rel}}
\newcommand{\relX}{\cat{Rel}_{X}}
\newcommand{\mul}{\cat{Mul}}
\newcommand{\mulX}{\cat{Mul}_{X}}
\newcommand{\pis}{\pi_{*}}
\newcommand{\hgt}{\text{hgt}}
\title{Metric Comparisons of Relations}
\author{Kenneth P. Ewing}
\address{4201 Cathedral Ave, NW Apt 715E Washington, DC 20016}
\email{kenneth.p.ewing@gmail.com}
\author{Michael Robinson}
\address{Mathematics and Statistics\\
American University\\
Washington, DC, USA}
\email{michaelr@american.edu}
\begin{document}
\begin{abstract}
  This paper defines a new pseudometric for binary relations between finite sets that measures consensus among subsets. The main results are (1) a concise restatement of this pseudometric with an intuitively appealing interpretation via a full and faithful functor from the category of relations to a category of relation multisets and (2) that the pseudometric can be bounded without an expensive search of possible mappings, based solely on the dimensions of the relations themselves. Additionally, (3) an algorithm is described to calculate this bound with time and memory complexity at worst proportional to the product of those dimensions: $\mathcal{O}(m \times n)$.  The tools developed in this paper should find broad application in exploratory data analysis.  We provide one such application by briefly exploring \emph{ad hoc} consensus specifications for the well-known PDF file format.
\end{abstract}

\maketitle

\tableofcontents

\section{Introduction}

We classify people, things, ideas all the time. At the heart of classification is some rule for discriminating: ``$\alpha$ is in group $X$, because $\alpha$ exhibits features $\{a, c, q\}$.'' It can be hard enough to measure and handle uncertainties about the features of $\alpha$, and so on; but what if we are unsure what the discrimination rule even is? What is moral and what is immoral? One common solution is to look at classifications made by others we trust to find a consensus rule, some specification that is perhaps not universal but axiomatically ``enough'' to act with confidence. Mathematical tools to help find consensus rules like that abound. For example, principal component analysis and factor analysis use the covariance of feature measurements to identify principal or unobserved variables that independently accumulate variance in data (see, e.g., \cite[]{dunteman_principal_1989}, \cite[]{hanafi_connections_2011}). Formal concept analysis uses the partial ordering of common feature sets in incidence matrices to construct a hierarchy of formal concepts that organize and cover the data (see, e.g., \cite[]{ganter_formal_1999}, \cite[]{skopljanac-macina_formal_2014}, \cite[]{dias_concept_2015}). Topological data analysis uses persistent homology and other techniques to identify stable or invariant characteristics of a space sampled by data (see, e.g., \cite[]{ghrist_elementary_2014}, \cite[]{curry_topological_2015}). All can be used to discover or construct a set of possibly latent characteristics and relationships among them that can describe all or most of the data (see, e.g., \cite[]{janostik_interface_2020}). Applied to data from classifications by trusted agents, these tools can give us a consensus rule to guide our own decisions.

A concrete example, which motivated this paper, arises from the humble ``PDF'' document. The Portable Document Format, originally developed by Adobe Systems in the 1990s and released for standardization by the International Organization for Standardization as ISO 32000-1:2008 \cite[]{iso_pdf_2008},\footnote{The standard was recently updated to ISO 32000-2:2021. (\href{https://www.iso.org/standard/75839.html}{https:\slash\slash www.iso.org\slash standard\slash75839.html}).} specifies how to represent electronic documents so that they can be displayed on screen or paper uniformly, independent of the software or hardware environment. Each PDF encodes what and where to place marks on a page in part using a subset of the PostScript page description programming language. The scheme works so well that PDFs have become a format of choice for replacing paper with electronic versions.
PDF parsers are required by the standard to support powerful capabilities like the ability to encapsulate data and to calculate transformations to accommodate different reading environments.
However, these capabilities and the wide adoption of the standard make PDFs attractive vectors for electronic attacks on computer systems.
Indeed, malicious payloads have been demonstrated since at least 2001 (see, e.g., \cite[]{wikipedia_pdf_nodate} (citing announcement on Adobe Systems online forum August 15, 2001); \cite[]{muller_processing_2021} (discussing attacks using compliant PostScript)). This has led to research into fast and extremely scalable techniques to classify PDFs as ``safe''or to quarantine potentially ``unsafe'' ones (see \cite[]{bratus_safe_nodate}). Unfortunately, the PDF standard is so complex that ensuring complete compliance with it has not been possible, at least to date. Consequently, PDF readers and parsers do not always render the same PDF identically, and, more ominously, they do not always agree on whether or why to reject a PDF. In short, we do not have a universal specification of a ``safe'' PDF. We need to find a consensus rule.

In recent research our team has developed the concept of a \emph{weighted Dowker complex} to represent binary relations in order to use topological tools to reveal latent consensus rules. For instance, in the context of classifying PDFs as safe, the data can be binary decisions---``error'' v. no ``error''---about each of several hundred thousand sample PDFs by a dozen different readers or parsers \cite[]{robinson_2021_looking}. This amounts to a relation among documents and parsers, for which a weighted Dowker complex can be constructed with parsers as the vertex set \cite[]{Ambrose_2020}.  We recently demonstrated that a cosheaf of abstract simplicial complexes constructed from the weighted Dowker complex faithfully represents the relation \cite[]{robinson_cosheaf_2020}.

An ideal consensus, of course, should be stable in the sense that relatively small changes in the relation should not destroy the consensus. To help assess the degree of consensus we define a pseudometric distance for relations akin to a Hamming distance (definition \ref{df:distance}).
Like a Hamming distance, this relation distance is a function of potential mappings between spaces represented by the relations. We explore this pseudometric in an attempt to find practical ways to calculate or estimate it without requiring an exhaustive search of mappings, particularly in light of the potential application to large relations. The paper explores limitations of the Dowker complex representation of a relation and then uses a variation on Monro's seminal categorial definition of multisets \cite[]{monro_concept_1987} to analyze our relation distance. Petrovsky has outlined a range of other metrics of similarity or dissimilarity among multisets \cite[]{petrovsky_metrics_2019}. The focus here, however, is on the distance metric we have defined. Viewing relations as boolean matrices, it has been shown that decomposing them into a set of basis matrices---which can be interpreted as representing a latent consensus---is an NP-hard problem (see \cite[]{trnecka_data_2018}, fn. 1 (citing \cite{Stockmeyer_1975})) and computing formal concept lattices can be extremely expensive because of their combinatorial complexity \cite[]{dias_methodology_2017}. The aim here is not to compute the latent consensus itself but to help make our distance metric practical in the search for topological methods to identify a latent consensus.

The main results are that
\begin{enumerate}
  \def\labelenumi{(\arabic{enumi})}
  \item a natural metric can be defined on the space of relations (definition \ref{df:distance}), akin to a Hamming distance,
  \item a full and faithful functor from the category of relations to a category of relation multisets enables a concise restatement of the pseudometric with an intuitively appealing interpretation (theorem \ref{theorem:mulRfunctor});
  \item relation distance can be bounded based solely on the dimensions of the relations themselves, without expensively searching the space of possible mappings (theorem \ref{theorem:bounds}); and
  \item the $\kappa$ algorithm calculates this bound with time and memory complexity proportional to the product of those dimensions: $\mathcal{O}(m \times n)$ (section \ref{section:algorithm}). A \texttt{python3} script implementing the algorithm may be found at \href{https://github.com/kpewing/relations.git}{https:\slash\slash github.com\slash kpewing\slash relations.git}.
\end{enumerate}

The plan of the paper is as follows. After a brief motivation in Section \ref{sec:motivation}, we define our category of relations and explore some examples to tease out some of the complexities they embody in Section \ref{sec:relations}. We then consider Dowker complexes and their limitations for analyzing distance between relations in Section \ref{sec:dowker}. With these preliminaries in hand, we define a category of relation multisets and prove the existence of a full and faithful functor to it from the category of relations in Section \ref{sec:multiset_functor}. The heart of the paper is Section \ref{sec:calculating}, which uses the multiset representation to restate our distance in terms that enable both an intuitive interpretation and bounds that can be calculated practically using the algorithm. In Section \ref{sec:conclusion}, we conclude with some general observations and suggestions for further research.

\section{Motivation}
\label{sec:motivation}

Consider the PDF file classification problem mentioned in the introduction.  Suppose we would like to disposition a set of files $X$ as ``good'' or ``bad'' given two additional sets of files for comparison: one set $S$ of ``safe'' files and one set $U$ of ``unsafe'' files.  Given a pseudometric $d$ for sets of files, this problem reduces to that of comparing $d(X,S)$ with $d(X,U)$.

The methodology proposed in this paper is that we regard not just a set $X$ of files, but rather a relation between a set of files and a set of messages produced by a collection of parsers.  That is, for each file and for each potential message, we record whether that message was produced when attempting to read the file.  Note that a message could be, but need not be, an \emph{error} message that indicates a problem with the file.  Parsers also often produce \emph{informational} messages that do not indicate a judgement about the validity of a file.  We can store this information in a tabular matrix format, with a row for each message and a column for each file.  Such a matrix is a convenient representation of the relation between files and messages.  Patterns in the presence or absence of messages can reveal subtle information about the safety of a given file.  For instance, some files are good but produce harmless error messages from particularly stringent parsers, while dangerous files may evade detection when read by more lenient parsers.

\begin{figure}
  \begin{center}
    \includegraphics[width=5in]{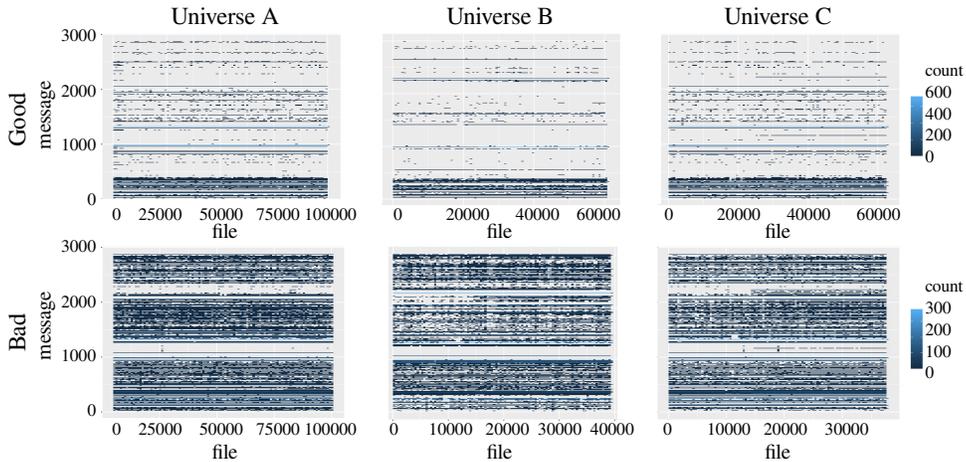}
    \caption{Matrices for the six relations discussed in the text.  The meanings of the rows are the same across all six matrices, and correspond to the set of $3023$ standardized regular expressions matched against file output.  The columns correspond to each file present in the corresponding set, and differ between matrices.  Dark colors correspond to the number of times a corresponding regular expression was found in the output for a given file.  Gray corresponds to the absence of a match for a particular regular expression on a given file.}
    \label{fig:relation_matrices}
  \end{center}
\end{figure}

In a recent processing campaign, which was a follow-on effort to the one discussed in \cite{robinson_2021_looking}, $400004$ files were provided to us by the DARPA SafeDocs Test and Evaluation Team.  The Test and Evaluation Team divided the files into three ``universes'' based on expected semantic and syntactic features, and further divided these universes into ``good'' and ``bad'' files, as follows:
\begin{itemize}
  \item Universe A: $100001$ good files, $100001$ bad files,
  \item Universe B: $60737$ good files, $39264$ bad files, and
  \item Universe C: $62408$ good files, $37593$ bad files.
\end{itemize}
The Test and Evaluation Team ensured that the Universe A bad files had problems of a rather general nature, while the other two sets of bad files were of several specific varieties, many of which did not produce error messages upon parsing.

We read the $40004$ PDF files using a set of $28$ parsers.  Rather than recording one row per parser, which would seem rather coarse, we classified the error messages produced by the parsers by using a standardized set of $3023$ regular expressions run against the output of each file.  This process is easy to deploy uniformly across all files and parsers.  These regular expressions became the rows of our relation's matrix representation.  Conversely, each file corresponds to a column in the relation's matrix representation.

Figure \ref{fig:relation_matrices} shows the resulting six relation matrices: three for files that are good, three for files that are bad.  Visually, the good and bad files' matrices are quite distinct.  The good files generally tend to produce fewer messages that match our regular expressions, which explains the presence of gray in their corresponding frames in Figure \ref{fig:relation_matrices}.  (Indeed, the good files produced far fewer messages on average than the bad files, not just fewer matches against our regular expression set.)  Notice that all of the six matrices share the same row definitions, since these correspond to the regular expressions, but the columns cannot be directly compared between relations.  Managing the incompatibility of columns forms the bulk of the effort in the latter sections of this paper.

\begin{figure}
  \begin{center}
    \includegraphics[height=3in]{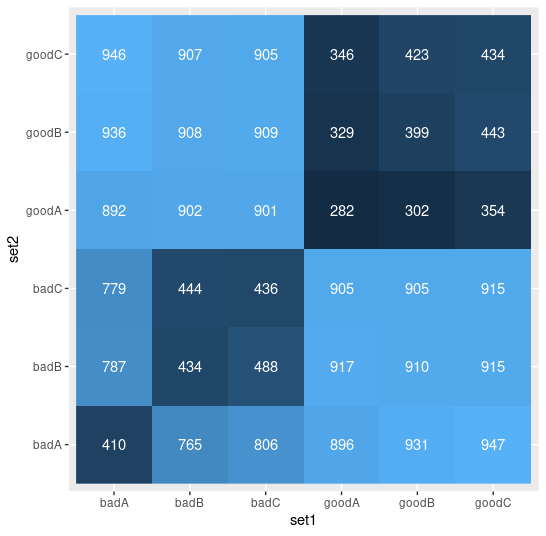}
    \caption{The upper bound (see Theorem \ref{theorem:bounds} in Section \ref{sec:general_statement}) on relation distance between each pair of relations in Figure \ref{fig:relation_matrices}.  The bound was computed using $1000$ files drawn from each relation in each pair. Lighter cell colors reflect higher relation distance.}
    \label{fig:relation_distances}
  \end{center}
\end{figure}

Using the upper bound presented in Theorem \ref{theorem:bounds} in Section \ref{sec:general_statement} of this paper, we can estimate the distance between each pair of relations in Figure \ref{fig:relation_matrices}.  The results of this calculation are shown in Figure \ref{fig:relation_distances}.  Before interpreting the results, a few cautions are worthy of note.  Although the matrix of pairwise distances should ideally have zero diagonal, this is not the case for an upper bound on the distance.  The bound on the distance from a relation to itself could be (and is) larger than zero in our case.  Typically the diagonal is the smallest distance in each row of Figure \ref{fig:relation_distances} with the exception of the first two rows, where the diagonal is still quite close to the smallest value.  Secondly, the matrix of pairwise distances ought to be symmetric across the diagonal, even though Figure \ref{fig:relation_distances} exhibits a slight asymmetry.  This is entirely due to sampling error.  For efficiency reasons, each pairwise distance was computed independently with a random draw of $1000$ files (columns) from each relation being compared.  These draws were redone for each pair of relations, which results in a small change in distance whenever the calculation is repeated.  Finally, the values of the distances exhibited in Figure \ref{fig:relation_distances} reflect the maximum of $1000$, since this is the number of columns drawn for the estimate.

Aside from these cautions, the large values (corresponding to highest relation distance and lightest colors) in the off-diagonal blocks of Figure \ref{fig:relation_distances} justify our intuition that the good files and bad files are quite different.  Furthermore, it is evident that the good files are all quite similar to each other, having the lowest relation distances and darkest cells in the upper right quadrant.  The bad files are much more variable, with the bad files in Universe A being rather distinct from the others, as reflected in the separation of lighter and darker cells within the lower left quadrant.  Indeed, this separation is precisely what we would expect since Universe A bad files had problems that generated error messages, while the other two Universes' bad files did so less frequently.

The remainder of the paper provides all necessary details to compute the relation pseudometric and its upper bound.  Additionally, it proves several theoretical guarantees about the behavior of the pseudometric.

\section{Relations}
\label{sec:relations}

Let $R \subseteq X \times Y$ be a relation between finite sets $X$ and $Y$, which can be represented as a Boolean matrix $(r_{x,y})$.

\begin{definition}
  \label{df:rel_cat} (which has \cite[Sec 3.3]{JoyofCats} or \cite[pg. 54]{Rydeheard_1988} as a special case, and is manifestly the same as what appears in \cite{brun2019sparse})
  The \emph{category of relations} ${\bf Rel}$ has triples $(X,Y,R)$ for objects.  A morphism $(X,Y,R) \to (X',Y',R')$ in ${\bf Rel}$ is defined by a pair of functions $f:X\to X'$, $g:Y\to Y'$ such that $(f(x),g(y))\in  R'$ whenever $(x,y)\in R$.  Composition of morphisms is simply the composition of the corresponding pairs of functions, which means that ${\bf Rel}$ satisfies the axioms for a category.
\end{definition}

  It will be useful to consider the full subcategory ${\bf Rel_+}$ of ${\bf Rel}$ in which each object $(X,Y,R)$ has the property that for each $x \in X$, there is a $y \in Y$ such that $(x,y) \in R$, and conversely for each $y \in Y$, there is an $x \in X$ such that $(x,y) \in R$. For objects in this subcategory the matrix representation of $R$ has no zero rows or columns.

The situation we want to consider is that of two relations $R_1 \subseteq (X \times Y_1)$ and $R_2 \subseteq (X \times Y_2)$, sharing the set $X$.  (In the case of the PDF file analysis in Section \ref{sec:motivation}, $X$ consists of the set of message regular expressions.)  This suggests that we restrict our attention from the category ${\bf Rel}$ of all relations to the category ${\bf Rel}_X$ of relations on a fixed set $X$. The subcategory ${\bf Rel}_X$ has pairs $(Y,R)$ for objects, in which $R \subseteq (X \times Y)$.  Abusing notation slightly, we will think of each object $(Y,R)$ of ${\bf Rel}_X$ as an object $(X,Y,R)$ of ${\bf Rel}$.  Each morphism of ${\bf Rel}_X$ is a morphism $(id_X,g)$ of ${\bf Rel}$ in which the first map is an identity map on $X$.  We will usually simply write $g: (Y,R) \to (Y',R')$ for a morphism of ${\bf Rel}_X$.  Specifically a function $g: Y \to Y'$ will be considered a morphism of ${\bf Rel}_X$ if $(x,g(y)) \in R'$ whenever $(x,y) \in R$.

Notice that ${\bf Rel}_X$ is not a full subcategory of ${\bf Rel}$, since we do not allow a morphism of ${\bf Rel}_X$ to transform $X$ even though a morphism of ${\bf Rel}$ could indeed do that even if two relations share $X$ as their first set.

We can consider comparing two objects $(Y,R)$ and $(Y',R')$ in $\relX$ by using something like an edit distance, but permitting elements of $Y$ to be permuted. That is, we can use a measure of changes induced by any function mapping $Y$ components of the two objects,
called \emph{weight}, to define a \emph{distance} between $\relX$ objects that is a pseudometric.

\begin{definition}
  \label{df:weight}
Given two $\relX$ objects $r_1 = (Y_1, R_1)$ and $r_2=(Y_2,R_2)$ and a function $g: Y_1 \to Y_2$, we will usually write $g: r_1 \to r_2$, and will define the \emph{weight} of $g$ as
  \begin{equation*}
    w(g|r_1, r_2) := \max_{x\in X} \left\{ \#Y_2\!\setminus\! g(Y_1) +\! \sum_{y\in Y_1}\! \left\{\begin{array}{l}0\ \text{if}\ ( (x,y)\in R_1\ \text{and}\ (x, g(y))\in R_2))\\\phantom{0}\ \text{or}\ ((x,y)\notin R_1\ \text{and}\ (x,g(y)\notin R_2))\\1\ \text{otherwise}\end{array} \!\!\right\} \!\right\}
  \end{equation*}
\end{definition}

\begin{definition}
  \label{df:distance} The \emph{distance} between two $\relX$ objects $r_1$ and $r_2$ is defined as \begin{equation*}
    d(r_1,r_2) := \max\ \{\min_{g: r_1\to r_2}\negthickspace w(g|r_1, r_2),\ \min_{g': r_2\to r_1}\negthickspace w(g|r_1, r_2)\ \}
  \end{equation*} where the $g$ and $g'$ are allowed to range over all functions, without restriction.
\end{definition}

Note that these definitions apply not only to $\relX$ morphisms but also to functions that are not $\relX$ morphisms. As will be discussed shortly (see Example \ref{eg:eg_nomorph_emptyrel}) and explored more deeply later (see Proposition \ref{proposition:morphismweight}), this general applicability is important, because morphisms do not always exist between two arbitrary $\relX$ objects. Consequently, we will use ``morphism'' only when referring to a category morphism and will use ``function'' or ``mapping'' when not restricted to morphisms, \emph{even when used with respect to category objects.}

The weight combines for each $x\in X$ the number of times $x$'s relations change under the mapping $g:Y_1\to Y_2$ and the number of wholly ``new'' relations for $x$ in the target $r_2$ (since outside the range of $g$), and then yields the largest of these sums. It can thus be understood as measuring the largest change induced by $g$ for any $x$; and the smallest weight over all functions $g\colon r_1\to r_2$ can be understood as counting edits from $r_1\to r_2$. Taking the larger of the edit counts in both directions, i.e., from $r_1\to r_2$ and from $r_2\to r_1$, is what defines the distance between the two relations; this is indeed a \emph{pseudometric}.

\begin{proposition}
  \label{prop:disapseudometric} The distance $d$ is a pseudometric for $\relX$, i.e., it satisfies for all $\relX$ objects $r_1, r_2, r_3$:
  \begin{description}
  \item[\textbf{Nonnegativity}] $d(r_1, r_2) \ge 0$
  \item[\textbf{Symmetry}] $d(r_1, r_2) = d(r_2, r_1)$
  \item[\textbf{Reflexivity}] $d(r_1, r_1) = 0$
  \item[\textbf{Triangle inequality}]
    $d(r_1, r_3)\le d(r_1, r_2) + d(r_2, r_3)$.
  \end{description}
  \begin{proof}
    \label{proof:pseudometricd}
    \noindent
    \begin{description}
      \item[\textbf{Nonnegativity}] follows from the fact that weight is constructed from a sum of 0s and 1s.
      \item[\textbf{Symmetry}] follows from the symmetrical construction of the distance formula, which is unchanged by exchanging the two $\relX$ objects.
      \item[\textbf{Reflexivity}] if $g$ and $g'$ are identity functions, then both weights are zero.
    \item[\textbf{Triangle inequality}] calls for a somewhat more intricate argument!
    \end{description}

    Let $g_1\colon Y\to Y'$ and $g_2\colon Y'\to Y''$ be given and let $(r_{x,y})$ be the binary matrix representation of a relation $(X,Y,R)$. First, observe that in the weight of any function $g$, the summation term
    \begin{equation*}
      \sum_{y\in Y}\! \left\{\begin{array}{l}0\ \text{if}\ ( (x,y)\in R\ \text{and}\ (x, g(y))\in R')\\\phantom{0}\ \text{or}\ ((x,y)\notin R\ \text{and}\ (x,g(y)\notin R'))\\1\ \text{otherwise}\end{array} \!\!\right\} = \sum_{y\in Y} |(r'_{x,g(y)}) - (r_{x,y})|.
    \end{equation*}
    Since the RHS expresses a 1-norm between vectors, it inherits the triangle inequality. So we have
    \begin{align*}
      \sum_{y\in Y} |(r''_{x,(g_2\circ g_1)(y)}) - (r_{x,y})| &= \sum_{y\in Y} |(r''_{x,(g_2\circ g_1)(y)}) - (r_{x,y}) + (r'_{x,g_1(y)}) - (r'_{x,g_1(y)})|\\
      &\le \sum_{y\in Y} |(r''_{x,(g_2\circ g_1)(y)}) - (r'_{x,g_1(y)})| + \sum_{y\in Y} |(r'_{x,g_1(y)}) - (r_{x,y})|\\
      &\le \sum_{y\in Y'} |(r''_{x,g_2(y')}) - (r'_{x,g_1(y')})| + \sum_{y\in Y} |(r'_{x,g'(y)}) - (r_{x,y})|. \tag{1}
    \end{align*}

    Next, to show that
    \begin{equation*}
      \#(Y''\setminus (g_2\circ g_1)(Y)) \le \#(Y''\setminus g_2(Y')) + \#(Y\setminus g_1(Y))
    \end{equation*}
    consider an element in the image of $Y'$ under $g_2$ that is not in the image of $Y$ under $g_2\circ g_1$, i.e., $y''\in g_2(Y')\setminus (g_2\circ g_1)(Y)$. Its preimage cannot overlap the image of $Y$ under $g_1$, i.e., $g_2^{-1}(y'')\cap g_1(Y) = \emptyset$. Therefore every element of $g_2^{-1}(y'')$ is in $Y'\setminus g_1(Y)$. So, since $g_1$ is a function, every element of $Y'$ must correspond to exactly one element of $Y''$ and hence
    \begin{equation*}
      \#(Y'\setminus g_1(Y)) \ge \#(g_2(Y')\setminus (g_2\circ g_1)(Y)). \tag{2}
    \end{equation*}
    But since $(g_2\circ g_1)(Y)\subseteq g_2(Y')$, we have $(Y''\setminus g_2(Y')) \subseteq (Y''\setminus (g_2\circ g_1)(Y))$, which in terms of cardinalities means
    \begin{equation*}
      \#(Y''\setminus (g_2\circ g_1)(Y)) - \#(Y''\setminus g_2(Y')) \le \#((g_2\circ g_1)(Y)\subseteq g_2(Y')).
    \end{equation*}
    Combined with (2) and rearranging this means that
    \begin{equation*}
      \#(Y''\setminus (g_2\circ g_1)(Y)) \le \#(Y''\setminus g_2(Y')) + \#(Y\setminus g_1(Y)). \tag{3}
    \end{equation*}
    Taking the maximum over $x\in X$ of the sum of (1) and (3) preserves their inequality and so the weight function exhibits the triangle inequality.

    What remains is to show that the triangle inequality is preserved by the distance function's maximization of the minimization of the weights of $g_3$ and of $g_1$ and $g_2\circ g_1$. For the minimization, observe that every composition $g_2\circ g_1$ is a function $Y\to Y''$, but not conversely. Therefore the minimum over all functions $g_2\colon Y\to Y''$ is taken over a larger set (by inclusion) than the minimums taken separately over functions $g_1\colon Y\to Y'$ and $g_2\colon Y'\to Y''$ and cannot be larger than their sum. The minimization thus preserves the triangle inequality, as, again, does the final maximization. Thus the distance function satisfies the triangle inequality.
  \end{proof}
\end{proposition}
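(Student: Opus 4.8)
The plan is to dispatch nonnegativity, symmetry, and reflexivity directly from the shape of the definitions and to reserve the real work for the triangle inequality. Nonnegativity is immediate: each weight $w(g\mid r_1,r_2)$ is a maximum over $x\in X$ of a cardinality $\#(Y_2\setminus g(Y_1))\ge 0$ plus a sum of $0/1$ indicators, so every weight, every minimum of weights, and the outer maximum are all nonnegative. Symmetry holds because the defining expression for $d(r_1,r_2)$ is literally invariant under swapping $r_1$ and $r_2$: exchanging them merely transposes the two arguments of the outer $\max$. Reflexivity follows by exhibiting a single witness: taking $g=g'=\mathrm{id}_Y$ makes the cardinality term $\#(Y\setminus Y)=0$ and every indicator $0$, so $w(\mathrm{id}\mid r_1,r_1)=0$, and since weights are nonnegative this minimum is attained at $0$ in both directions.

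For the triangle inequality I would argue in two stages. The first stage establishes a triangle inequality for the weight itself with respect to composition: given $g_1\colon Y_1\to Y_2$ and $g_2\colon Y_2\to Y_3$, I want
\[
  w(g_2\circ g_1\mid r_1,r_3)\le w(g_1\mid r_1,r_2)+w(g_2\mid r_2,r_3).
\]
The weight splits into a Hamming-like summation term and an image-complement cardinality term, and I would treat them separately before recombining under the maximum over $x$. For the summation term, the key observation is that, fixing $x$, the inner sum equals $\sum_{y}|r'_{x,g(y)}-r_{x,y}|$, a genuine $1$-norm of a difference of $0/1$ vectors; inserting and subtracting the intermediate vector $r'_{x,g_1(y)}$ and applying the triangle inequality for the $1$-norm yields the bound on the summation parts.

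The harder half of the first stage is the cardinality term, where I must show
\[
  \#(Y_3\setminus(g_2\circ g_1)(Y_1))\le \#(Y_3\setminus g_2(Y_2))+\#(Y_2\setminus g_1(Y_1)).
\]
I expect this to be the main obstacle, since it is the one piece that does not follow formally from norm axioms and instead requires a careful counting argument about images of composites. The idea I would pursue is to classify each element of $Y_3$ missed by the composite according to whether it is already missed by $g_2$ (contributing to the first term) or is hit by $g_2$ only from points of $Y_2$ lying outside $g_1(Y_1)$; for the latter, every preimage under $g_2$ of such an element must avoid $g_1(Y_1)$, and since $g_2$ is a function it cannot increase complement cardinalities, so this group is bounded by $\#(Y_2\setminus g_1(Y_1))$. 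Summing indicators over $x$ and taking the maximum then preserves both inequalities, giving the weight-level triangle inequality.

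The final stage lifts this to the distance. Writing $A_{ij}=\min_{h\colon r_i\to r_j} w(h\mid r_i,r_j)$ over \emph{all} functions $h$, I would first note that every composite $g_2\circ g_1$ is one admissible function $Y_1\to Y_3$, so minimizing over all functions can only be smaller than minimizing over composites; combined with the weight-level inequality and the freedom to minimize the two factors independently, this gives $A_{13}\le A_{12}+A_{23}$, and symmetrically $A_{31}\le A_{32}+A_{21}$. The outer maximum is then handled by the elementary fact that if $A_{13}\le A_{12}+A_{23}$ and $A_{31}\le A_{21}+A_{32}$ then $\max\{A_{13},A_{31}\}\le \max\{A_{12},A_{21}\}+\max\{A_{23},A_{32}\}$, which is exactly $d(r_1,r_3)\le d(r_1,r_2)+d(r_2,r_3)$. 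The point worth flagging is that allowing \emph{arbitrary} functions, rather than only $\relX$ morphisms, in the minimization is precisely what legitimizes the composition argument, since a morphism need not exist between two given objects.
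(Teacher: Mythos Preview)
Your proposal is correct and follows essentially the same route as the paper: dispatch the first three axioms directly, then for the triangle inequality rewrite the summation term as an $\ell^1$ difference to invoke the ordinary triangle inequality, handle the image-complement cardinality by a separate preimage-counting argument, combine under $\max_{x}$ to obtain a weight-level subadditivity under composition, and finally lift to $d$ by noting that minimizing over all functions dominates minimizing over composites and that the outer $\max$ preserves the bound. Your articulation of that last lifting step (the elementary inequality $\max\{A_{13},A_{31}\}\le \max\{A_{12},A_{21}\}+\max\{A_{23},A_{32}\}$) is in fact more explicit than the paper's, which simply asserts that the final maximization preserves the inequality.
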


\begin{proposition}
  The pseudometric $d$ respects isomorphism classes of ${\bf Rel}_X$; all elements of an isomorphism class are distance zero apart.
\end{proposition}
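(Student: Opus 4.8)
The plan is to exploit nonnegativity (already established in Proposition~\ref{prop:disapseudometric}), so that it suffices to exhibit, for any two isomorphic objects, functions of weight zero in each direction. Since isomorphism is transitive, any two objects lying in a common isomorphism class are directly isomorphic; I may therefore assume I am given an isomorphism $g\colon r_1 \to r_2$ in $\relX$ between $r_1 = (Y_1, R_1)$ and $r_2 = (Y_2, R_2)$, meaning a morphism with a two-sided inverse morphism $g^{-1}\colon r_2 \to r_1$ satisfying $g^{-1}\circ g = \mathrm{id}_{Y_1}$ and $g\circ g^{-1} = \mathrm{id}_{Y_2}$.

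First I would compute $w(g \mid r_1, r_2)$ directly from Definition~\ref{df:weight}. Because an isomorphism is in particular a bijection, $g(Y_1) = Y_2$, so the term $\#(Y_2 \setminus g(Y_1))$ vanishes for every $x \in X$. The heart of the argument is that the summation term also vanishes, i.e. that for each $x \in X$ and each $y \in Y_1$ one has $(x,y) \in R_1$ if and only if $(x, g(y)) \in R_2$. The forward implication is exactly the defining condition that $g$ is a morphism of $\relX$. For the converse I would invoke the morphism condition for $g^{-1}$: if $(x, g(y)) \in R_2$ then $(x, g^{-1}(g(y))) \in R_1$, and since $g^{-1}\circ g = \mathrm{id}_{Y_1}$ this reads $(x,y) \in R_1$. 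Thus every summand is $0$, the sum is $0$ for each $x$, and $w(g \mid r_1, r_2) = \max_{x \in X}\{0 + 0\} = 0$.

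The inverse morphism $g^{-1}$ is itself an isomorphism $r_2 \to r_1$, so the identical computation gives weight $0$ in the reverse direction. Since weight is a sum of nonnegative terms, each of the two minima appearing in Definition~\ref{df:distance} is bounded below by $0$ and, witnessed by $g$ and $g^{-1}$ respectively, bounded above by $0$; hence both equal $0$ and $d(r_1, r_2) = \max\{0, 0\} = 0$.

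I do not expect a serious obstacle. The one point requiring care is recognizing that a morphism of $\relX$ supplies only the one-directional implication $(x,y)\in R_1 \Rightarrow (x,g(y))\in R_2$, so that promoting it to the biconditional that annihilates the summation term genuinely requires the inverse morphism together with the fact that the two are mutually inverse. A bare bijective morphism would not suffice, which is precisely why the hypothesis must be isomorphism rather than mere bijection.
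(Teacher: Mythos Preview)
Your argument is correct and follows essentially the same route as the paper: exhibit the isomorphism and its inverse as witnessing functions of weight zero in each direction, so that both minima in Definition~\ref{df:distance} are zero. If anything, you are more careful than the paper, which simply asserts that isomorphism in $\relX$ amounts to the biconditional $(x,y)\in R \Leftrightarrow (x,\phi(y))\in R'$; you explicitly derive this from the one-directional morphism conditions for $g$ and $g^{-1}$ together with $g^{-1}\circ g = \mathrm{id}$, which is exactly the point worth spelling out.
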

\begin{proof}
  If $(Y,R)$ and $(Y',R')$ are isomorphic in ${\bf Rel}_X$, this means that there are functions $\phi: Y \to Y'$ and $\psi = \phi^{-1}$ such that $(x,y) \in R$ if and only if $(x,\phi(y)) \in R'$ for all $x$.  Choosing these as the functions in the definition of $d((Y,R),(Y',R'))$ results in all zeros for both sums, which is the minimum in both cases.  Therefore, $d((Y,R),(Y',R'))=0$.
\end{proof}

\begin{proposition}
  \label{prop:same_cardinality}
  If $Y$ and $Y'$ are the same cardinality, then $d\left((Y,R),(Y',R')\right)$ reduces to
  \begin{equation*}
    d\left((Y,R),(Y',R')\right) = \max \left\{\begin{aligned}
    \min_{\phi : Y \to Y' \atop \text{ bijective}}\max_{x \in X} \sum_{y\in Y} \left\{\begin{aligned}0 & \text{ if }((x,y)\in R \text{ and } (x,\phi(y)) \in R')\\& \text{ or } ((x,y) \notin R \text{ and } (x,\phi(y)) \notin R')\\ 1 &\text{ otherwise}\end{aligned}\right\},\\
    \min_{\psi : Y' \to Y \atop \text{ bijective}}\max_{x \in X}\sum_{y'\in Y'} \left\{\begin{aligned}0 & \text{ if }((x,y')\in R' \text{ and } (x,\psi(y')) \in R)\\& \text{ or } ((x,y') \notin R' \text{ and } (x,\psi(y')) \notin R)\\ 1 &\text{ otherwise}\end{aligned}\right\}
      \end{aligned}\right\}.
  \end{equation*}
\end{proposition}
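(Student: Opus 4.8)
The plan is to show that when $|Y|=|Y'|$ the minimum of the weight over all functions is already attained at a bijection, which forces the term $\#(Y'\setminus g(Y))$ to drop out. Write $n=|Y|=|Y'|$. Since any bijection $\phi\colon Y\to Y'$ between finite sets of equal cardinality is surjective, we have $\#(Y'\setminus\phi(Y))=0$, so $w(\phi\mid r_1,r_2)=\max_{x\in X}\sum_{y\in Y}|r'_{x,\phi(y)}-r_{x,y}|$, using the matrix reformulation of the summation term established in the proof of Proposition~\ref{prop:disapseudometric}. Hence the right-hand side of the claimed identity is exactly $\max\{\min_{\phi\text{ bij}}w(\phi),\ \min_{\psi\text{ bij}}w(\psi)\}$, and because the two mapping directions are symmetric it suffices to prove, for one direction, that $\min_{g\colon Y\to Y'}w(g)=\min_{\phi\text{ bij}}w(\phi)$.

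One inequality is immediate, since bijections form a subset of all functions: $\min_{g}w(g)\le\min_{\phi}w(\phi)$. For the reverse inequality I would show that every function can be converted into a bijection of no greater weight. If $g$ is not a bijection then, as $|Y|=|Y'|$, it is neither injective nor surjective, so there exist distinct $y_1,y_2\in Y$ with $g(y_1)=g(y_2)$ and a missed target $z\in Y'\setminus g(Y)$. I would define $g'$ to agree with $g$ everywhere except that $g'(y_2)=z$. This single reroute reduces the image defect by exactly one, since $z$ is now covered while $g(y_1)$ remains covered by $y_1$; that is, $\#(Y'\setminus g'(Y))=\#(Y'\setminus g(Y))-1$.

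The crux is the accounting showing $w(g')\le w(g)$. For each $x\in X$, the summation term changes only in the single summand indexed by $y_2$, from $|r'_{x,g(y_2)}-r_{x,y_2}|$ to $|r'_{x,z}-r_{x,y_2}|$. Because the matrix entries lie in $\{0,1\}$, each such summand lies in $\{0,1\}$, so the summation term can grow by at most one. Combined with the exact decrease of one in the $x$-independent image-defect term, the bracketed quantity
\[
\#(Y'\setminus g'(Y))+\sum_{y\in Y}|r'_{x,g'(y)}-r_{x,y}|
\]
does not exceed its counterpart for $g$, and this holds for every $x$; taking the maximum over $x$ then gives $w(g')\le w(g)$. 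Iterating reduces the image defect by one at each step and so terminates after at most $n$ steps at a bijection $\phi$ with $w(\phi)\le w(g)$. Since $g$ was arbitrary, $\min_{\phi\text{ bij}}w(\phi)\le\min_{g}w(g)$, and equality follows.

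I expect this accounting step to be the main obstacle: one must verify that the unit decrement of the image-defect term compensates for the worst-case unit increment of the summation term \emph{uniformly in $x$}, before the maximum over $x$ is taken, and this hinges essentially on the entries being Boolean, so that no single summand can change by more than one. Applying the same reroute argument to the direction $\psi\colon Y'\to Y$ and recombining through the outer maximum then yields the stated reduction.
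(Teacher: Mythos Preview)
Your proposal is correct and follows essentially the same approach as the paper: both arguments convert an arbitrary minimizing function into a bijection of no greater weight by exploiting that each summand is $\{0,1\}$-valued, so rerouting a single domain point can raise the sum by at most one while the image-defect term drops by exactly one. The only cosmetic difference is that you perform this conversion iteratively (one collision/missed-target pair at a time), whereas the paper constructs the bijection in a single step by choosing one representative from each fiber of the minimizer and sending the remaining points to the uncovered targets; the accounting is the same in both cases.
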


Specifically, the Proposition gives a condition for when the terms $\# Y' \backslash \phi (Y)$ and  $\# Y \backslash \psi(Y')$ can be omitted when computing the distance.

\begin{proof}
  Suppose that $\phi': Y \to Y'$ is the minimizer among all functions of
  \begin{equation*}
    \label{eq:foo}
    \max_{x \in X}\left\{ \#\{Y' \backslash \phi'(Y)\} + \sum_{y\in Y} \left\{\begin{aligned}0 & \text{ if }((x,y)\in R \text{ and } (x,\phi'(y)) \in R')\\& \text{ or } ((x,y) \notin R \text{ and } (x,\phi'(y)) \notin R')\\ 1 &\text{ otherwise}\end{aligned}\right\}\right\}.
  \end{equation*}
  Since $Y$ and $Y'$ have the same cardinality, there is at least one bijection between them.  If $\phi$ the minimizer among all bijections of
  \begin{equation*}
    \max_{x\in X} \left\{\sum_{y\in Y} \left\{\begin{aligned}0 & \text{ if }((x,y)\in R \text{ and } (x,\phi(y)) \in R')\\& \text{ or } ((x,y) \notin R \text{ and } (x,\phi(y)) \notin R')\\ 1 &\text{ otherwise}\end{aligned}\right\}\right\},
  \end{equation*}
  we claim that $\phi$ and $\phi'$ achieve the same value in the above expressions. Because $\phi: Y \to Y'$ is bijective, this means that
  \begin{equation*}
    \#\{Y' \backslash \phi(Y)\} =0.
  \end{equation*}
  Additionally, the image of $\phi$ contains the image of $\phi'$, since bijections are necessarily surjective.  We may therefore select a bijection $\phi$ that maximizes its agreement with $\phi'$.  Specifically, let us choose a bijection $\phi$ such that if $z$ is in the image of $\phi'$, then there is exactly one $y \in \phi'^{-1}(z)$ such that $\phi(y) = \phi'(y) = z$.  For such a $\phi$, the subset of $Y$ on which $\phi$ and $\phi'$ disagree is at least as large as the number of elements outside the image of $\phi'$, or in other words,
  \begin{equation*}
    \label{eq:bar}
    \#\{y \in Y : \phi(y) \not= \phi'(y) \} \le \# \{Y' \backslash \phi'(Y)\}.
  \end{equation*}

  Consider the $x\in X$ that achieves the maximum value in expression \eqref{eq:foo}.  We can split the sum in that expression into three terms:
  \begin{equation*}
    \begin{aligned}
      \#\{Y' \backslash \phi'(Y)\} &+ \sum_{y : \phi(y)=\phi'(y)} \left\{\begin{aligned}0 & \text{ if }((x,y)\in R \text{ and } (x,\phi'(y)) \in R')\\& \text{ or } ((x,y) \notin R \text{ and } (x,\phi'(y)) \notin R')\\ 1 &\text{ otherwise}\end{aligned}\right\} \\&+ \sum_{y : \phi(y)\not=\phi'(y)} \left\{\begin{aligned}0 & \text{ if }((x,y)\in R \text{ and } (x,\phi'(y)) \in R')\\& \text{ or } ((x,y) \notin R \text{ and } (x,\phi'(y)) \notin R')\\ 1 &\text{ otherwise}\end{aligned}\right\}
    \end{aligned}
  \end{equation*}
  Notice that the middle term is shared by the corresponding sum for $\phi$.

  Moreover because of the inequality \eqref{eq:bar},
  \begin{equation*}
    \sum_{y : \phi(y)\not=\phi'(y)} \left\{\begin{aligned}0 & \text{ if }((x,y)\in R \text{ and } (x,\phi(y)) \in R')\\& \text{ or } ((x,y) \notin R \text{ and } (x,\phi(y)) \notin R')\\ 1 &\text{ otherwise}\end{aligned}\right\} \le \#\{Y' \backslash \phi'(Y)\}.
  \end{equation*}
  Therefore,
  \begin{align*}
    \sum_{y\in Y} & \left\{ \begin{aligned}0 & \text{ if }((x,y)\in R \text{ and } (x,\phi(y)) \in R')\\& \text{ or } ((x,y) \notin R \text{ and } (x,\phi(y)) \notin R')\\ 1 &\text{ otherwise}\end{aligned}\right\}\\ &\le \#\{Y' \backslash \phi'(Y)\} + \sum_{y : \phi(y)=\phi'(y)} \left\{\begin{aligned}0 & \text{ if }((x,y)\in R \text{ and } (x,\phi'(y)) \in R')\\& \text{ or } ((x,y) \notin R \text{ and } (x,\phi'(y)) \notin R')\\ 1 &\text{ otherwise}\end{aligned}\right\} \\
    &\le \#\{Y' \backslash \phi'(Y)\} + \sum_{y : \phi(y)=\phi'(y)} \left\{\begin{aligned}0 & \text{ if }((x,y)\in R \text{ and } (x,\phi'(y)) \in R')\\& \text{ or } ((x,y) \notin R \text{ and } (x,\phi'(y)) \notin R')\\ 1 &\text{ otherwise}\end{aligned}\right\} \\
    & \qquad +\sum_{y : \phi(y)\not=\phi'(y)} \left\{\begin{aligned}0 & \text{ if }((x,y)\in R \text{ and } (x,\phi'(y)) \in R')\\& \text{ or } ((x,y) \notin R \text{ and } (x,\phi'(y)) \notin R')\\ 1 &\text{ otherwise}\end{aligned}\right\}
  \end{align*}
  This implies that if $\phi'$ is the minimizer as claimed, then there is a bijection taking the same value.
\end{proof}

It is important to be wary of the distinction between morphisms and functions in the definition of the pseudometric $d$. Specifically, although the pseudometric can measure the distance between $R_1 = \begin{pmatrix} 1 \end{pmatrix}$ and $R_2 = \begin{pmatrix} 0 \end{pmatrix}$, there are no morphisms at all from $R_1 = \begin{pmatrix} 1 \end{pmatrix}$ to $R_2 = \begin{pmatrix} 0 \end{pmatrix}$.

Observing that distance is well defined for all pairs of $\relX$ objects, one might ask whether there will always be a $\relX$ morphism between two $\relX$ objects, whose weight is no greater than the distance between the objects? It turns out that the answer is No, as the following two examples demonstrate.


\begin{example}
  \label{eg:eg_nomorph_emptyrel}
  Consider the case of $\relX$ objects $r_1=(Y, R_1), r_2=(Y, R_2)$ with single-element sets $X=\{1\},Y=\{a\}$ and relations $R_1=\{(1,a)\}$ and $R_2=\emptyset$. No function $h\colon Y\to Y$ can give rise to a valid $\relX$ morphism $(Y,R_1)\to (Y,R_2)$, because the definition of a $\relX$ morphism requires that $(x,h(y))\in R_2$ whenever $(x,y)\in R_1$ but this is impossible since $R_2=\emptyset$. However, because weight and distance metrics are defined in terms of any function on $Y$, not just those honoring relations (i.e., not just morphisms), and because for any single-element $Y$ the only function $Y\to Y$ is the identity function $id_{Y}$, we can calculate
  \begin{align*}
    w(id_{Y}|r_1,r_2) &= 0 + \left\{\begin{array}{l}0\ \text{if}\ (1,a)\in \{(1,a)\}\ \text{and}\ (1,a)\in \emptyset\dots\\1\ \text{otherwise}\end{array}\right\}\\
    &= 0 + 1 = 1\\
    d(r_1,r_2) &= \max\{\displaystyle{\min_{\phi\in\{id_{Y}\}}} \{w(\phi|r_1,r_2)\}\}\\
    &= w(id_{Y}|r_1,r_2)\\
    &= 1
  \end{align*}
\end{example}

\begin{example}
  \label{eg:eg_nomorph_diffrels}
  For a second example, consider $\relX$ objects $R_1, R_2$ defined by $X=\{a,b,c,d\},\ Y_1=\{1,2,3,4\}$, and $Y_2=\{5,6,7\}$, and relations
  \begin{equation*}
    R_1 = \begin{pmatrix} 1 & 1 & 1 & 0\\1 & 0 & 0 & 0\\0 & 1 & 1 & 1\\0 & 0 & 1 & 1\end{pmatrix} \quad\text{and}\quad R_2 = \begin{pmatrix} 1 & 1 & 0\\1 & 0 & 1\\0 & 1 & 0\\0 & 1 & 1\end{pmatrix}
  \end{equation*}
  labeling rows $X$ and columns $Y_1, Y_2$ from top left. Calculating the distance requires considering $3^4=81$ functions $\phi\colon Y_1\to Y_2$ and $4^3=64$ functions $\psi\colon Y_2\to Y_1$. As may be verified by the reader,\footnote{This may be done by an exhaustive search, or, for instance, by following the strategy to prove lemma \ref{lemma:kappabound}, outlined in section \ref{partitionform} at page \pageref{proofstrategy}.} however, the functions given by
  \begin{equation*}
    \phi(1)=5,\ \phi(2)=6,\ \phi(3)=6,\ \phi(4)=7
  \end{equation*} and \begin{equation*}
    \psi(5)=2,\ \psi(6)=3,\ \psi(7)=1
  \end{equation*}
  yield the required minimum values. This means that
  \begin{equation*}
    \#(Y_2\!\setminus\!\phi(Y_1)) = 0\quad\text{and}\quad \#(Y_1\!\setminus\!\psi(Y_2)) = 1.
  \end{equation*}
  As prefigured in the proof of Proposition \ref{prop:disapseudometric}, we can calculate the second term in the weight of a function by comparing the binary matrix of its domain to a matrix constructed of columns from its range indexed by the function. For $\phi$ this matrix is:
  \begin{equation*}
    (R_2)_{x,\phi(y_1)} = \begin{pmatrix}1 & 1 & 1 & 0\\1 & 0 & 0 & 0\\0 & 1 & 1 & 1\\0 & 0 & 1 & 1\end{pmatrix}
  \end{equation*}
  and for $\psi$ it is:
  \begin{equation*}
    (R_1)_{x,\psi(y_2)} = \begin{pmatrix}1 & 1 & 1\\0 & 0 & 1\\1 & 1 & 0\\0 & 1 & 0\end{pmatrix}.
  \end{equation*}
  These matrices differ from $(R_1)_{x,y_1}$ and $(R_2)_{x,y_2}$, respectively, at most one time per row. Hence the distance is
  \begin{equation*}
    d(r_1, r_2) = \max\{0 + 1, 1 + 1\} = 2.
  \end{equation*}

  Now, $\phi$ does not define a valid morphism $r_1 \to r_2$, because $(c,\phi(4)=7)\notin R_2$. We can, however, construct an alternative $\phi'$ that is a morphism by substituting $\phi'(4)=6$, which yields the same maximum difference per row of 1 and hence the same distance $d(r_1,r_2)$. In the other direction, $\psi$ also fails to define a valid morphism because, \emph{inter alia}, $(d,\psi(7)=1)\notin R_1$. No substitution can render $\psi$ a proper morphism, however---indeed, no morphism is possible in this direction---because there is no $y\in Y_1$ for which both $(b,y)\in R_1$ and $(d,y)\in R_1$.
\end{example}

As a bit of a preview, Figure \ref{fig:eg_nomorph_diffrels} illustrates the Dowker complexes $D(r_1)$ and $D(r_2)$ for the $\relX$ objects $r_1$ and $r_2$ of example \ref{eg:eg_nomorph_diffrels}.  (See definition \ref{df:dowker} for \emph{Dowker complex}.) 

\begin{figure}[ht]
  \includegraphics[height=1.5in]{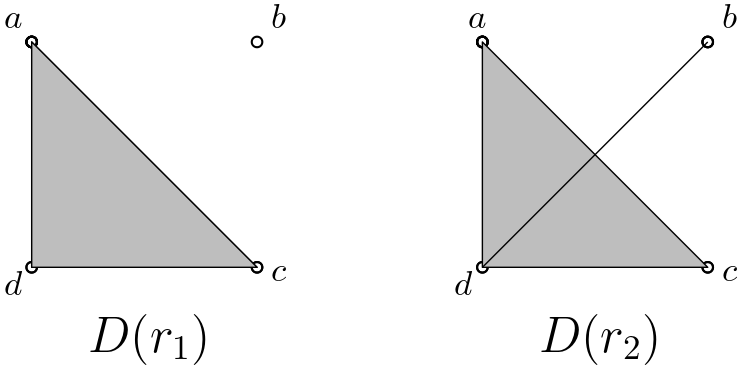}
  \caption{Dowker complexes $D(r_1)$ and $D(r_2)$ for Example \ref{eg:eg_nomorph_diffrels}.}
  \label{fig:eg_nomorph_diffrels}
\end{figure}

The preceding two examples demonstrated that we can calculate distance between $\relX$ relations regardless of whether a $\relX$ morphism exists between them, and both a morphism and a non-morphism can give rise to the same weight and distance. Example \ref{eg:eg_nomorph_emptyrel} showed that there can fail to be a morphism if one of the $\relX$ objects has an empty relation. What if neither relation is empty? Is there always at least one morphism between two objects $(Y_1,R_1), (Y_2,R_2)$ of $\relX$ if neither has an empty relation, i.e., so long as $R_1\neq\emptyset\neq R_2$? As the following example demonstrates, the answer is No.

\begin{example}
  \label{eg:eg_nomorph_bidir}
  Given $X=\{a,b,c\}$ and $Y=\{1,2\}$ consider the pair of $\relX$ objects $r_1=(Y,R_1)$ and $r_2=(Y,R_2)$ with
  \begin{equation*}
    R_1=\begin{pmatrix} 1 & 0\\ 1 & 0\\ 0 & 1\\ \end{pmatrix}\quad\text{and}\quad R_2=\begin{pmatrix} 1 & 0\\ 0 & 1\\ 0 & 1\\ \end{pmatrix}
  \end{equation*}
  There can be no morphism $r_1\to r_2$ because there is no $y'\in Y_2$ for which both $(a,y')$ and $(b,y')$, nor any morphism in the other direction $r_2\to r_1$ because there is no $y\in Y_1$ for which both $(b,y)$ and $(c,y)$.
\end{example}

\section{Dowker Complexes}
\label{sec:dowker}

The discussion so far demonstrates that we need to be careful when applying weight and distance to $\relX$ objects, because we can calculate these quantities for pairs of $\relX$ objects, regardless of whether valid $\relX$ morphisms exist between them.  It is already known that the Dowker complex is a covariant functor from $\rel$ to the category of abstract simplicial complexes \cite[12, p. 9 (Theorem 3)]{robinson_cosheaf_2020}. Although the functor is not faithful---non-isomorphic relations can have the same Dowker complex see \cite[12, p. 11 (explaining how a $\rel$ morphism can change both $X$ and $Y$ without changing the Dowker complex)]{robinson_cosheaf_2020}---can we guarantee the existence of a $\relX$ morphism between two $\relX$ objects if we restrict ourselves to objects that have a valid simplicial map (Definition \ref{df:simplicialmap}) between their Dowker complexes? It turns out that the answer is, No, and, in fact, the implication proceeds in the converse direction: The presence of a $\relX$ morphism ensures the existence of a valid simplicial map between Dowker complexes.

\begin{definition}
  An \emph{abstract simplicial complex $X$ on a set $V_X$} consists of a set $X$ of subsets of $V_X$ such that if $\sigma \in X$ and $\tau \subseteq \sigma$, then $\tau \in X$.  Each $\sigma \in X$ is called a \emph{simplex of $X$}, and each element of $V_X$ is a \emph{vertex of $X$}.  Every subset $\tau$ of a simplex $\sigma$ is called \emph{face} of $\sigma$.
\end{definition}

It is usually tiresome to specify all of the simplices in a simplicial complex.  Instead, it is much more convenient to supply a \emph{generating set} $S$ of subsets of the vertex set.  The unique smallest simplicial complex containing the generating set is called the \emph{abstract simplicial complex generated by $S$}.

\begin{definition}
  \label{df:dowker}
  The \emph{Dowker complex} $D(X,Y,R)$ is the abstract simplicial complex on vertex set $X$ whose simplices $\alpha=[x_{0}, \dots, x_{k}]$ are composed of vertices all of which share a relation in $R$ with the same $y\in Y$:
  \begin{equation*}
    D(X,Y,R) = \{[x_{0}, \dots, x_{k}]\colon\ \text{there exists a}\ y\in Y\ \text{s.t.}\ (x_{i}, y)\in R\ \text{for all}\ i=0, \dots, k\}.
  \end{equation*}
  The \emph{total weight} \cite{robinson_cosheaf_2020} is a function $t: D(X,Y,R)\to \mathbb{N}$ given by
  \begin{equation*}
    t_R(\sigma) = \# \{y \in Y : (x,y) \in R\text{ for all }x \in \sigma\}.
  \end{equation*}
  The \emph{differential weight} \cite{Ambrose_2020} is a function $d : D(X,Y,R) \to \mathbb{N}$ given by
  \begin{equation*}
    d_R(\sigma) = \# \{y \in Y: \left((x,y)\in R\text{ if } x\in \sigma\right) \text{ and } \left((x,y)\notin R\text{ if } x\notin \sigma\right) \}.
  \end{equation*}
  When only one relation is being discussed, we will often write $t(\sigma)$ for $t_R(\sigma)$ and $d(\sigma)$ for $d_R(\sigma)$.
\end{definition}

For convenience, let us begin by defining
\begin{equation*}
  Y_\sigma = \{y\in Y : (x,y) \in R\text{ for all }x\in\sigma\}
\end{equation*}
for a simplex $\sigma$ of $D(X,Y,R)$.  The total weight function is simply the cardinality of this set: $t(\sigma) = \# Y_\sigma$.

\begin{remark}
    Is the total weight a discrete Morse function?  It is not; consider the following example.  Let $X=\{A,B,C\}$, $Y=\{a\}$, and $R$ be defined by the matrix
    \begin{equation*}
      R = \begin{pmatrix} 1 \\ 1 \\ 1 \end{pmatrix}.
    \end{equation*}
    In this case, the Dowker complex $D(X,Y,R)$ is the complete simplex $[A,B,C]$.  The total weight on every simplex is the same, namely $1$.  This violates the discrete Morse function condition.
\end{remark}

The pseudometric we defined on ${\bf Rel}_X$ is compatible with the total weight functions for the Dowker complex developed previously \cite{robinson_cosheaf_2020}.

\begin{proposition}
  \label{prop:weight_bound}
  The pseudometric $d$ bounds the difference in total weight on any face of the Dowker complex---unused faces given a weight of zero.  Specifically, consider two relations $R$ and $R'$ on the same pair of sets $X$ and $Y$.  Then for each simplex $\sigma \in 2^X$, the total weight $t_R(\sigma)$ can differ from the total weight of $t_{R'}(\sigma)$ by the following,
  \begin{equation*}
    |t_R(\sigma)-t_{R'}(\sigma)| \le d((Y,R),(Y,R')) (\dim (\sigma)+1)
  \end{equation*}
  where the total weight is taken to be zero if $\sigma$ is not in one of the Dowker complexes.  Moreover, the same bound holds for the differential weight functions as well, namely
  \begin{equation*}
    |d_R(\sigma)-d_{R'}(\sigma)| \le d((Y,R),(Y,R')) (\dim (\sigma)+1).
  \end{equation*}
\end{proposition}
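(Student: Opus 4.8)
The plan is to collapse both directions of the distance onto a single, well-chosen bijective relabeling of the columns and then argue by counting columns. Because $R$ and $R'$ are defined on the same set $Y$, they trivially have equal cardinality, so Proposition \ref{prop:same_cardinality} applies: the distance $d((Y,R),(Y,R'))$ is realized by bijections, and the ``new columns'' terms $\#Y\setminus\phi(Y)$ vanish. I would fix a bijection $\phi\colon Y\to Y$ attaining the minimum weight in the $r_1\to r_2$ direction and let $\tilde R'$ be $R'$ with its columns relabeled by $\phi$, so $\tilde R'_{x,y}=R'_{x,\phi(y)}$. The crucial observation is that both $t$ and the differential weight are invariant under a bijective relabeling of columns, since each merely counts columns exhibiting a prescribed pattern; hence $t_{R'}(\sigma)=t_{\tilde R'}(\sigma)$ and $d_{R'}(\sigma)=d_{\tilde R'}(\sigma)$. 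Moreover, because $\phi$ attains the minimizing weight, $d((Y,R),(Y,R'))\ge \max_x\sum_y|\tilde R'_{x,y}-R_{x,y}|$, so for \emph{every} row $x$ the number of columns at which $R$ and $\tilde R'$ differ is at most $d((Y,R),(Y,R'))$.

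For the total weight, write $A=\{y:R_{x,y}=1\text{ for all }x\in\sigma\}$ and $B=\{y:\tilde R'_{x,y}=1\text{ for all }x\in\sigma\}$, so that $t_R(\sigma)=\#A$ and $t_{R'}(\sigma)=\#B$. Any column in the symmetric difference $A\triangle B$ must carry, at some row $x\in\sigma$, an entry on which $R$ and $\tilde R'$ disagree. Summing the per-row disagreement bound over the $\dim(\sigma)+1$ rows belonging to $\sigma$ shows that the number of such columns is at most $(\dim(\sigma)+1)\,d((Y,R),(Y,R'))$, and since $|\#A-\#B|\le\#(A\triangle B)$ the total-weight inequality follows. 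The degenerate case $\sigma=\emptyset$ is consistent, as then $t_R(\emptyset)=\#Y=t_{R'}(\emptyset)$ and the bound reads $0\le 0$.

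For the differential weight I would attempt the identical column-counting argument with $A=\{y:\supp y=\sigma\text{ in }R\}$ and $B$ the analogue for $\tilde R'$. Here is where I expect the main obstacle. Unlike the total weight, membership of a column in the differential set is governed by \emph{all} of $X$: a column can be forced out of $A\triangle B$ by a flipped entry in a row \emph{outside} $\sigma$, because a spurious $1$ appearing off $\sigma$ destroys the ``support exactly $\sigma$'' pattern. Consequently the natural estimate localizes the disagreements only to the whole of $X$, yielding the weaker bound $\#(A\triangle B)\le\#X\cdot d((Y,R),(Y,R'))$ rather than the claimed factor $\dim(\sigma)+1$. Closing this gap is the crux of the statement, and before committing to the stated constant I would stress-test it on small examples --- for instance a relation whose columns all have support $\{x_0\}$, split apart by single off-$\sigma$ flips in distinct columns, which changes $d_R(\{x_0\})$ by many while keeping each row's disagreement count (and hence $d$) at one. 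If such configurations cannot be absorbed into the factor $\dim(\sigma)+1$, then the differential-weight assertion would require either an extra hypothesis on $\sigma$ or replacement of $\dim(\sigma)+1$ by a factor accounting for the rows off $\sigma$ as well.
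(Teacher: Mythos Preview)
Your total-weight argument is correct and is, in essence, the paper's own proof made precise. The paper simply asserts that if $d((Y,R),(Y,R'))=k$ then ``there may be at most $k$ differences in each row of the matrices for $R$ and $R'$,'' so restricting to the $\dim(\sigma)+1$ rows indexed by $\sigma$ leaves at most $k(\dim(\sigma)+1)$ columns that can change. Your explicit appeal to Proposition~\ref{prop:same_cardinality} to pass to a bijective relabeling $\tilde R'$, together with the symmetric-difference estimate $\lvert\#A-\#B\rvert\le\#(A\triangle B)$, supplies the details the paper leaves implicit; the two arguments are the same in substance.

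Your skepticism about the differential-weight half is well founded, and the counterexample you sketch is genuine. The paper's proof of that half is exactly the hand-wave you flag: it reuses the sentence ``at most $k(\dim(\sigma)+1)$ columns may differ'' without addressing rows outside $\sigma$. Concretely, take $X=\{x_0,x_1,\dotsc,x_n\}$, $Y=\{y_1,\dotsc,y_n\}$, let every column of $R$ have support $\{x_0\}$, and let column $y_j$ of $R'$ have support $\{x_0,x_j\}$. Any bijection produces zero disagreements in row $x_0$ and exactly one disagreement in each row $x_j$ with $j\ge 1$, so $d((Y,R),(Y,R'))=1$; yet for $\sigma=\{x_0\}$ one has $d_R(\sigma)=n$ and $d_{R'}(\sigma)=0$, whereas the claimed bound is $1\cdot(\dim(\sigma)+1)=1$. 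Thus the differential-weight inequality with factor $\dim(\sigma)+1$ fails for $n\ge 2$; your diagnosis that a factor reflecting the rows off $\sigma$ (in effect $\#X$) is what the column-counting argument actually delivers is the correct one, and the paper's statement as written does not hold for the differential weight.
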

The fact that the same bound works for both weights indicates that the bound is often loose for one or the other kind of weight function.
\begin{proof}
  If $d((Y,R),(Y,R'))=k$, this means that there may be at most $k$ differences in each row of the matrices for $R$ and $R'$.  These differences need not occur in the same columns in each row.  Since $\sigma \in 2^X$ corresponds to $(\dim (\sigma)+1)$ rows (elements of $X$), then a maximum of $k (\dim(\sigma)+1)$ columns may differ when we restrict attention to the columns related to $\sigma$.  Therefore, the total weight for $\sigma$ may not differ more than that amount.  Since the differential weight counts exact matches rather than inclusions of columns, at most $k (\dim(\sigma)+1)$ columns may differ, so this is the most the differential weight may change on any given simplex.
\end{proof}

Combining Proposition \ref{prop:weight_bound} with Proposition \ref{prop:disapseudometric} and the fact that the weight functions are complete isomorphism invariants yields the following pleasing result.

\begin{corollary}
  The pseudometric $d$ is a metric on isomorphism classes of $\cat{Rel}$.  That is, two relations are isomorphic if and only if their $d$-distance is zero.
\end{corollary}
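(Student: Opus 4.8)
The plan is to observe that, since Proposition \ref{prop:disapseudometric} already establishes nonnegativity, symmetry, reflexivity, and the triangle inequality, the only thing standing between $d$ and a genuine metric on isomorphism classes is the \emph{separation} property: $d(r_1,r_2)=0$ must be equivalent to $r_1\cong r_2$. One implication is already in hand, namely the proposition immediately following Proposition \ref{prop:disapseudometric}, which shows that isomorphic $\relX$ objects are distance zero apart. So the entire content of the corollary reduces to the converse, that vanishing distance forces an isomorphism.

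For that converse I would argue through the weight functions, exactly as the surrounding prose suggests. Suppose $d((Y,R),(Y,R'))=0$. Proposition \ref{prop:weight_bound} bounds both $|t_R(\sigma)-t_{R'}(\sigma)|$ and $|d_R(\sigma)-d_{R'}(\sigma)|$ by $d((Y,R),(Y,R'))\,(\dim\sigma+1)$, and with the distance equal to zero this collapses to zero for every $\sigma\in 2^X$. Hence the total weight functions agree on every simplex, and so do the differential weight functions. Because the differential weight is a complete isomorphism invariant -- it records, for each subset of $X$, how many elements of $Y$ realize that subset as their exact relational support, i.e.\ it encodes the multiset of columns of the Boolean matrix -- equality of these functions yields a bijection between the columns of $R$ and those of $R'$ that preserves relation membership, which is precisely a $\relX$ isomorphism. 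Combined with the reverse implication and the pseudometric axioms, this shows $d$ descends to a metric on isomorphism classes.

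The hard part will be the mismatch in domains, which is the step I would treat most carefully. Proposition \ref{prop:weight_bound} is stated for two relations on the \emph{same} pair of sets $X$ and $Y$, whereas a generic pair of $\relX$ objects $(Y_1,R_1)$ and $(Y_2,R_2)$ may have $Y_1\neq Y_2$. I would repair this by noting that the total and differential weight functions are defined purely as functions on $2^X$ and never reference the labelling of $Y$, so the conclusion I actually need can be extracted directly from Definition \ref{df:weight}: a weight-zero function $g\colon Y_1\to Y_2$ must have $\#(Y_2\setminus g(Y_1))=0$, so $g$ is surjective, and must satisfy $(x,y)\in R_1\iff (x,g(y))\in R_2$ for all $x$, with the symmetric statement for the minimizing $g'\colon Y_2\to Y_1$. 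Two surjections between finite sets force equal cardinality and bijectivity, at which point $g$ is a relation-preserving bijection whose inverse is also relation-preserving, hence an isomorphism. This direct route both sidesteps the domain restriction and recovers the same conclusion the weight-invariant argument supplies. A final point I would flag is the distinction between $\relX$ isomorphism and $\rel$ isomorphism: since $d$ and the weight functions are all defined relative to a fixed $X$, the metric statement is properly about isomorphism classes in which $X$ is held fixed.
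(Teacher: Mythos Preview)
Your argument is correct and follows the paper's own route: the paper simply asserts that the corollary follows from combining Proposition~\ref{prop:disapseudometric}, Proposition~\ref{prop:weight_bound}, and the fact that the (differential) weight functions are complete isomorphism invariants, and you have spelled out exactly that combination. Your additional direct argument---extracting surjections from weight-zero functions, then using finiteness to force a relation-preserving bijection---is a genuine improvement over the paper's sketch, since it cleanly bypasses the same-$Y$ hypothesis in Proposition~\ref{prop:weight_bound} that you correctly flag; the paper does not address this gap. Your closing remark about $\relX$ versus $\rel$ is also apt: $d$ is only defined relative to a fixed $X$, so the statement is really about isomorphism classes in $\relX$.
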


\begin{example}
  \label{eg:eg_tight_dist_bound}
  The bound given in Proposition \ref{prop:weight_bound} is tight, which is to say that there are pairs of relations whose weight functions attain it.  As an example, consider the relations $R_1$ and $R_2$ on $X+\{a,b\}$, $Y=\{1,2\}$ given by
  \begin{equation*}
    R_1 = \begin{pmatrix}
      1 & 1\\
      1 & 1\\
      \end{pmatrix}
  \end{equation*}
  and
  \begin{equation*}
    R_2 = \begin{pmatrix}
      1 & 0\\
      0 & 1\\
      \end{pmatrix}.
  \end{equation*}
  It is easy to see that $d((Y,R_1),(Y,R_2)) = 1$, since regardless of the function $\phi$ or $\psi$ chosen to transform columns, at least $1$ difference occurs in each row.  The total and differential weights of $[a,b]$ are $d_{R_1}([a,b]) = 2$, $t_{R_1}([a,b]) = 2$, $t_{R_2}([a,b]) = 0$, and $d_{R_2}([a,b]) = 0$.  These observations agree with Proposition \ref{prop:weight_bound} because $d((Y,R_1),(Y,R_2)) (\dim ([a,b])+1) = 2$.
\end{example}

\begin{figure}[ht]
  \includegraphics[height=1.5in]{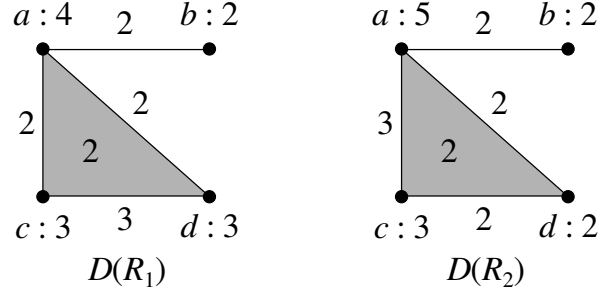}
  \caption{The Dowker complexes $D(R_1)$ and $D(R_2)$ and total weight functions $t_{R_1}$ and $t_{R_2}$ for Example \ref{eg:eg_total_weight}.}
  \label{fig:eg_total_weight}
\end{figure}

\begin{example}
  \label{eg:eg_total_weight}
  Consider the two relations $R_1$ and $R_2$ given by the following matrices:
  \begin{equation*}
    R_1 = \begin{pmatrix}
      1&1&1&1&0\\
      1&1&0&0&0\\
      0&0&1&1&1\\
      0&0&1&1&1\\
    \end{pmatrix}, \;
    R_2 =\begin{pmatrix}
    1&1&1&1&1\\
    1&1&0&0&0\\
    0&0&1&1&1\\
    0&0&0&1&1\\
    \end{pmatrix}.
  \end{equation*}
  A somewhat arduous calculation (made a bit easier by Proposition \ref{prop:same_cardinality}) reveals that $d(R_1,R_2)=1$.  If we index the rows by $X=\{a,b,c,d\}$, then the two Dowker complexes and corresponding total weights on each simplex are shown in Figure \ref{fig:eg_total_weight}.  The Figure shows that the total weights differ on four simplices, and each by not more than $1$.  This is in accordance with Proposition \ref{prop:weight_bound}, since the smallest dimension (that of a vertex) is zero, the total weights may differ by $1$.
\end{example}

The morphisms in the category of abstract simplicial complexes are \emph{simplicial maps}.  These maps are the specialization of continuous maps to the context of abstract simplicial complexes, and as such admit a combinatorial definition.

\begin{definition}
  \label{df:simplicialmap}
  Given abstract simplicial complexes $X$ and $Y$ with vertex sets $V_{X}$ and $V_{Y}$, respectively, a function $f\colon V_{X}\to V_{Y}$ is a \emph{simplicial map} if it maps every simplex $[v_0,\dots,v_k]\in X$ to a simplex $[f(v_0),\dots,f(v_k)]\in Y$ after removing duplicate vertices.
\end{definition}

The relationship between $\relX$ morphisms and simplicial maps is somewhat tenuous.  As further consideration of Example \ref{eg:eg_nomorph_bidir} shows, the existence of a simplicial map does not guarantee the existence of a morphism in $\relX$.  Specifically, given $\relX$ objects $(Y_1,R_1)$ and $(Y_2,R_2)$ with a simplicial map $\phi$ such that $\phi(D(X,Y_1,R_1))=D(X,Y_2,R_2)$, is there always at least one morphism $h\colon(Y_1,R_1)\to(Y_2,R_2)$?  Example \ref{eg:eg_nomorph_smplclmap} answers this question in the negative.

\begin{example}
  \label{eg:eg_nomorph_smplclmap}
  Recall that this Example involved the pair of $\relX$ objects $r_1=(Y,R_1)$ and $r_2=(Y,R_2)$ with $X=\{a,b,c\},\ Y=\{1,2\}$ and
  \begin{equation*}
    R_1=\begin{pmatrix} 1 & 0\\ 1 & 0\\ 0 & 1\\ \end{pmatrix}\quad\text{and}\quad R_2=\begin{pmatrix} 1 & 0\\ 0 & 1\\ 0 & 1\\ \end{pmatrix}.
  \end{equation*}
  Passing them through the Dowker functor yields
  \begin{equation*}
    d_1 = D(r_1) = \{[a],[b],[c],[a,b]\}\quad \text{and}\quad d_2 = D(r_2) = \{[a],[b],[c],[b,c]\}.
  \end{equation*}
  As illustrated in figure \ref{fig:eg_nomorph_smplclmap} the vertex map
  \begin{equation*}
    \phi(a)=c,\ \phi(b)=b,\ \phi(c)=a
  \end{equation*}
  results in a simplicial map $\phi(D(r_1))=D(r_2)$. But as demonstrated before, there is no morphism $h\colon r_1\to r_2$.
\end{example}

\begin{figure}[ht]
  \includegraphics[height=1.5in]{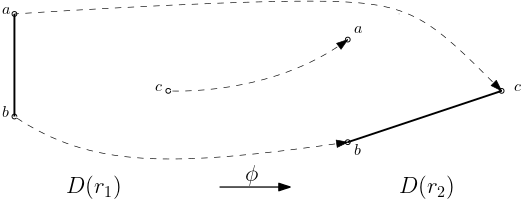}
  \caption{Dowker complexes $D(r_1)$ and $D(r_1)$ with simplicial map for Examples \ref{eg:eg_nomorph_smplclmap} and \ref{eg:eg_differentmaximalsimplices}.}
  \label{fig:eg_nomorph_smplclmap}
\end{figure}

In contrast, the existence of any $\relX$ morphism does imply the existence of a simplicial map.

\begin{proposition}
  \label{proposition:Dowkersimplicialmapexistence}
  If $(Y_1,R_1)$ and $(Y_2,R_2)$ are objects in $\relX$ with a morphism $h\colon(Y_1,R_1)\to(Y_2,R_2)$ then there is at least one simplicial map $\phi$ such that $\phi(D(X,Y_1,R_1))=D(X,Y_2,R_2)$.
\end{proposition}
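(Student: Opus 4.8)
The plan is to exhibit the required simplicial map explicitly, and the natural candidate is $\phi=id_X$, the identity on the shared vertex set $X$ --- equivalently, the simplicial map that the already-cited Dowker functor assigns to the $\rel$ morphism $(id_X,h)$. Recall that a morphism $h\colon(Y_1,R_1)\to(Y_2,R_2)$ of $\relX$ is a function $h\colon Y_1\to Y_2$ with $(x,h(y))\in R_2$ whenever $(x,y)\in R_1$, and that $D(X,Y_1,R_1)$ and $D(X,Y_2,R_2)$ are both abstract simplicial complexes on the \emph{same} vertex set $X$. By Definition \ref{df:simplicialmap} it therefore suffices to check that $\phi=id_X$ carries every simplex of $D(X,Y_1,R_1)$ to a simplex of $D(X,Y_2,R_2)$.

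First I would fix an arbitrary simplex $\sigma=[x_0,\dots,x_k]\in D(X,Y_1,R_1)$. By the definition of the Dowker complex there is a common witness $y\in Y_1$ with $(x_i,y)\in R_1$ for all $i=0,\dots,k$. Applying the morphism condition of $h$ to each such pair gives $(x_i,h(y))\in R_2$ for all $i$, so the single element $h(y)\in Y_2$ witnesses that $[x_0,\dots,x_k]\in D(X,Y_2,R_2)$. Since $id_X$ fixes every vertex we have $id_X(\sigma)=\sigma$, and the computation just performed shows $\sigma\in D(X,Y_2,R_2)$; hence $\phi=id_X$ sends simplices to simplices and is the required simplicial map. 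Run over all simplices, the same reasoning yields the containment $D(X,Y_1,R_1)\subseteq D(X,Y_2,R_2)$, so the image of $\phi$ lands inside the target complex exactly as Definition \ref{df:simplicialmap} demands. This is also immediate from functoriality: $(id_X,h)$ is a $\rel$ morphism $(X,Y_1,R_1)\to(X,Y_2,R_2)$, so the Dowker functor sends it to a simplicial map whose vertex component is $id_X$.

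There is no real computational obstacle here; the content lies entirely in the definitional bookkeeping of the previous paragraph, where the defining inequality of a $\relX$ morphism is precisely what is needed to push a common witness $y$ forward to a common witness $h(y)$. The one point that warrants care is the \emph{direction} of the implication: the witness-pushforward step is one-directional and does not reverse, which is consistent with Examples \ref{eg:eg_nomorph_bidir} and \ref{eg:eg_nomorph_smplclmap}, where a simplicial map between Dowker complexes exists even though no $\relX$ morphism does. Thus the argument must use only the forward morphism condition and claim no converse.
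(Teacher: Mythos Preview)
Your proof is correct and follows essentially the same approach as the paper: both take $\phi=id_X$ as the vertex map and verify it is simplicial by pushing a Dowker witness $y$ for a simplex $\sigma$ forward to the witness $h(y)$ via the morphism condition. Your write-up is in fact more explicit than the paper's, and your remark that the argument only yields $D(X,Y_1,R_1)\subseteq D(X,Y_2,R_2)$ (not equality) is a fair observation about the statement as phrased.
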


\begin{proof}
  Given $r_1=(Y_1,R_1)$ and $r_2=(Y_2,R_2)$ in $\relX$ with a valid morphism $f\colon r_1\to r_2$, let $d_1=D(X,Y_1,R_1)$ and $d_2=D(X,Y_2,R_2)$ and $\phi$ be a map $(X,Y_1,R_1)\to (X,Y_2,R_2)$ with $id_X$ as the vertex map. Then $\phi$ is a simplicial map, because for every simplex $\sigma\in d_1$ we have $(x,y)\in Y_1$ for all vertices $x\in\sigma$ and $(\phi(x)=x, f(y))\in R_2$, i.e., whenever $\sigma=[x_1,\dots,x_k]\in d_1$ we have $\tau=[\phi(x_1),\dots,\phi(x_k)]\in d_2$.
\end{proof}

This result, of course, is consistent with the fact that the Dowker complex is a covariant functor from $\rel$ to the category of abstract simplicial complexes $\cat{Asc}$, which means that morphisms in $\rel$, and hence also subcategory $\relX$, correspond to morphisms in $\cat{Asc}$, which are simplicial maps (see \cite[]{robinson_cosheaf_2020}).

What does the existence of a simplicial map $\phi\colon D(X,Y_1,R_1)\to D(X,Y_2,R_2)$ without a $\relX$ morphism $(X,Y_1,R_1)\to (X,Y_2,R_2)$ tell us? Dowker complexes with a simplicial map having $id_x$ as the vertex map but no $\relX$ morphism differ in their maximal simplices.

\begin{proposition}
  \label{proposition:Dowkeridentitysimplicialmap}
  If $(Y_1,R_1)$ and $(Y_2,R_2)$ are objects in $\relX$ without any morphism $h\colon(Y_1,R_1)\to(Y_2,R_2)$ but $\phi = (id_X, f)$ is a simplicial map $\phi\colon D(X,Y_1,R_1) \to D(X,Y_2,R_2)$, then the sets of maximal simplices of the two Dowker complexes are different.
\end{proposition}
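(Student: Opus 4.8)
The plan is to prove the contrapositive: I will show that if the two Dowker complexes have the same maximal simplices, then a $\relX$ morphism $h\colon(Y_1,R_1)\to(Y_2,R_2)$ must exist. Since a finite abstract simplicial complex equals the union of the power sets of its maximal simplices (every simplex sits inside a maximal one, and the complex is the downward closure of its facets), equality of the maximal-simplex sets of $D(X,Y_1,R_1)$ and $D(X,Y_2,R_2)$ forces $D(X,Y_1,R_1)=D(X,Y_2,R_2)$ as complexes. The whole argument then rests on a single correspondence between the relation level and the complex level, which I would record first.

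The key observation is that a $\relX$ morphism $r_1\to r_2$ exists exactly when $D(X,Y_1,R_1)\subseteq D(X,Y_2,R_2)$. For one direction, a morphism is a function $g\colon Y_1\to Y_2$ with $(x,y)\in R_1\Rightarrow(x,g(y))\in R_2$, so each column set $\sigma_y=\{x\in X:(x,y)\in R_1\}$ satisfies $\sigma_y\subseteq\{x:(x,g(y))\in R_2\}$ and is therefore a face of $D(X,Y_2,R_2)$; since these column sets generate $D(X,Y_1,R_1)$, the inclusion of complexes follows. This direction is essentially Proposition \ref{proposition:Dowkersimplicialmapexistence}. The reverse direction is the part I actually need, and is where the real work lies.

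To build the morphism from the containment, I would argue as follows. Assuming $D(X,Y_1,R_1)\subseteq D(X,Y_2,R_2)$, fix any $y\in Y_1$. Its column set $\sigma_y$ is a simplex of $D(X,Y_1,R_1)$, hence a simplex of $D(X,Y_2,R_2)$, so by the definition of the Dowker complex there is some $y_\ast\in Y_2$ with $(x,y_\ast)\in R_2$ for every $x\in\sigma_y$; choose one such $y_\ast$ and set $g(y)=y_\ast$. (If $\sigma_y=\emptyset$ the morphism condition on $y$ is vacuous and $g(y)$ may be any element of $Y_2$, which exists whenever $R_2\neq\emptyset$; this edge case disappears once one restricts to objects with no empty column, such as those in ${\bf Rel}_{+}$.) The resulting $g$ is a morphism, because $(x,y)\in R_1$ gives $x\in\sigma_y$, whence $(x,g(y))\in R_2$ by construction. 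Feeding $D(X,Y_1,R_1)=D(X,Y_2,R_2)$ into this construction produces a morphism $r_1\to r_2$, contradicting the hypothesis that none exists; therefore the maximal-simplex sets cannot coincide.

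I expect the main obstacle to be precisely this reverse construction, that is, recovering a relation-level function from purely combinatorial containment. The subtlety is that membership $\sigma_y\in D(X,Y_2,R_2)$ only guarantees the existence of \emph{some} covering witness $y_\ast\in Y_2$, and distinct elements of $Y_1$ may be forced to use different witnesses; what rescues the argument is that a morphism requires only a function on $Y_1$, with no injectivity or coherence constraint, so independent choices of witness assemble into a legitimate $g$. It is worth noting that the hypothesized identity-vertex simplicial map $\phi=(id_X,f)$ is itself just the complex-level expression of the very containment $D(X,Y_1,R_1)\subseteq D(X,Y_2,R_2)$ that drives the construction, so the genuine mathematical content of the proposition is this passage from containment of complexes back to an honest $\relX$ morphism.
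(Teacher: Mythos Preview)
Your proof is correct and rests on the same key observation as the paper's: absence of a $\relX$ morphism $r_1\to r_2$ is equivalent to the existence of some column set $\sigma_y=\{x:(x,y)\in R_1\}$ that no $y'\in Y_2$ covers, i.e.\ $\sigma_y\in D(X,Y_1,R_1)\setminus D(X,Y_2,R_2)$. The paper runs this directly (no morphism $\Rightarrow$ such a $\sigma$ exists $\Rightarrow$ the maximal simplex of $D_1$ containing $\sigma$ cannot be a maximal simplex of $D_2$), whereas you run the contrapositive (equal maximal simplices $\Rightarrow$ $D_1=D_2$ $\Rightarrow$ build the morphism by choosing witnesses). These are the same argument in two directions.

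Where you go beyond the paper is your closing remark: an identity-vertex simplicial map $D_1\to D_2$ already forces $D_1\subseteq D_2$ outright, and your witness construction then produces a morphism from that containment alone (at least when $Y_2\neq\emptyset$, or in ${\bf Rel}_+$). That makes the proposition's hypotheses essentially incompatible, so the conclusion holds vacuously. The paper uses the simplicial-map hypothesis only at the level of vertices and does not draw this stronger consequence; your observation is a genuine sharpening. Just be aware of the degenerate case you flagged: if $Y_2=\emptyset$ while $Y_1\neq\emptyset$ and $R_1=\emptyset$, both complexes are empty, $id_X$ is trivially simplicial, no morphism exists, yet the maximal-simplex sets agree---so the vacuousness (and indeed the proposition) implicitly relies on excluding such trivial objects.
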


\begin{proof}
  The simplicial map with $id_X$ as the vertex map means that $(x,y')\in R_2$ for some $y'\in Y_2$ whenever $(x,y)\in R_1$ for some $y\in Y_2$, but the absence of a $\relX$ morphism means no function $f(y)=y'$ can be defined. Consequently, there is some set of vertices $x\in X$ in relation to the same $y\in Y_1$, not all of whose elements are in relation to any one $y'\in Y_2$, i.e., there exists some $\sigma=[x_1,\dots,x_k]\in D(X,Y_1,R_1)$ for which
  \begin{equation*}
    \bigcap_{x\in\sigma}\{y'\in Y_2\colon (x,y')\in R_2\}=\emptyset \tag{1}
  \end{equation*}
  (Note that the simplex $\sigma$ must be of dimension 1 or greater.) Now, every simplex in a finite abstract simplicial complex must be contained within some maximal simplex in the complex. Hence $\sigma$ is contained in some maximal simplex $\tau \supseteq \sigma,\ \tau \in D(X,Y_1,R_1)$. Yet (1) means that no maximal simplex in $D(X,Y_2,R_2)$ can involve all of the vertices in $\sigma$. Hence the respective sets of maximal simplices of the two Dowker complexes must be different.
\end{proof}

\begin{example}
  \label{eg:eg_differentmaximalsimplices}
  Example \ref{eg:eg_nomorph_smplclmap} showed two relations $r_1, r_2$ between which no $\relX$ morphism existed but for which a simplicial map could be defined between their Dowker complexes (see figure \ref{fig:eg_nomorph_smplclmap}). The Dowker complexes of the two relations are
  \begin{equation*}
    D(r_1) = \{[a], [b], [c], [ab]\}\ \text{and}\ D(r_2) = \{[a], [b], [c], [bc]\},
  \end{equation*}
  and, indeed, the maximal simplices are different:
  \begin{equation*}
    [ab] \neq [bc].
  \end{equation*}
\end{example}

Proposition \ref{proposition:Dowkeridentitysimplicialmap}, might not be seem surprising, since Dowker complexes with the same maximal simplices on the same vertex set are obviously identical. The nuance of the proposition is that the absence of a morphism between the underlying $\relX$ objects actually prevents identity of Dowker complexes, despite the lack of faithfulness of the Dowker functor. The presence or absence of a morphism thus seems to point to something significant about the relations, which the presence or absence of simplicial maps between corresponding Dowker complexes does not capture. If we want to understand weight and distance between $\relX$ objects, then, we may want to try a different perspective.

\section{Relation Multiset is a Covariant Functor}
\label{sec:multiset_functor}

An alternative to Dowker complexes is presented by multisets. Note that for any element $y\in Y$ in a $\relX$ object we can collect all $x\in X$ with which $y$ is in relation, i.e., for a given $y$ we can collect

\begin{equation*}
\sigma = \{x\in X\colon (x,y)\in R\ \text{for all}\ x\in\sigma\ \text{and}\ (x,y)\notin R\ \text{for all}\ x\notin\sigma\}.
\end{equation*}

Such a subset of $X$, of course, is the same as the vertices in a
simplex of the Dowker complex $D(X,Y,R)$ for the same $(Y,R)$, but we can take a different perspective: Given $X$ and $\relX$ object $(Y,R)$, it is evident that every $y\in Y$ can be assigned exactly one such $\sigma\subseteq X$ and that collections of $y$ assigned the same $\sigma$ constitute disjoint subsets
\begin{equation*}
  Y^{\sigma}=\{y\in Y\colon (x,y)\in R\ \text{for all}\ x\in\sigma\ \text{and}\ (x,y)\notin R\ \text{for all}\ x\notin\sigma\}
\end{equation*}
whose union equals $Y$, that is, that $Y$ is partitioned by some $\{\sigma_i\}$ into $\{Y^{\sigma_i}\}$.\footnote{The cardinality of
\begin{equation*}
  \#Y^{\sigma}=\#\{y\in Y\colon (x,y)\in R\ \text{for all}\ x\in\sigma\ \text{and}\ (x,y)\notin R\ \text{for all}\ x\notin\sigma\}
\end{equation*}
is equal to the differential weight $d_R(\sigma)$ according to Definition \ref{df:dowker}.} This means $Y$ is a species of multiset.

\begin{definition}
  \label{df:mul}
  Loosely following \cite[]{monro_concept_1987} as extended by \cite[]{singh_mathematics_2016}~(see also, e.g., \cite[]{syropoulos_categorical_2003}), we define the category $\mul$ of \emph{multisets} $A$ whose objects are pairs $<A_{0}, \pi>$ of an ordinary set $A_{0}$ called the \emph{field} and a \emph{classifier} $\pi_{\Omega}\colon A_{0}\to \Omega$ that is injective from the field to an indexing set and identifies equivalence classes / partitions $A^{\sigma}$ by $\sigma\in\Omega$, including $\sigma=\emptyset$. Morphisms $<A_{0}, \pi_{\Omega}>\ \to\ <B_{0}, \pi_{\Psi}>$ are a pair of functions $f\colon A_{0}\to B_{0}$ and $g\colon \Omega\to\Psi$ that \emph{honor partitions}, i.e., if $\pi_{\Omega}(a) = \pi_{\Omega}(a')$ then $\pi_{g(\Omega)}(f(a))=\pi_{g(\Omega)}(f(a'))$. When it is clear what index set we are considering, we will drop it from the classifier's notation.
\end{definition}

\begin{proposition}
\label{proposition:muliswelldefined}
  The category $\mul$ is well defined.
\end{proposition}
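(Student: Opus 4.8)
The plan is to verify the four category axioms directly for $\mul$: that every object carries an identity morphism, that the componentwise composite of two $\mul$-morphisms is again a $\mul$-morphism, and that this composition is associative with the identities acting as two-sided units. Since a morphism $\langle A_0,\pi_\Omega\rangle \to \langle B_0,\pi_\Psi\rangle$ is nothing more than a pair of ordinary functions $(f,g)$, with $f\colon A_0\to B_0$ and $g\colon\Omega\to\Psi$, subject only to the partition-honoring constraint of Definition \ref{df:mul}, the associativity and unit laws will be inherited for free from the corresponding laws in $\cat{Set}$, applied separately in each of the two coordinates. The only real content to check is therefore that the partition-honoring constraint is stable under taking identities and under composition.

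It is useful first to reformulate honoring partitions as a quotient condition: the requirement that $\pi_\Omega(a)=\pi_\Omega(a')$ imply $\pi_\Psi(f(a))=\pi_\Psi(f(a'))$ says precisely that $f$ carries each fiber of $\pi_\Omega$ into a single fiber of $\pi_\Psi$, i.e.\ that $f$ descends to a well-defined map on the partition (quotient) sets, with the index behavior recorded by $g$. For the identity on $\langle A_0,\pi_\Omega\rangle$ I would take the pair $(\mathrm{id}_{A_0},\mathrm{id}_\Omega)$, which honors partitions vacuously: if $\pi_\Omega(a)=\pi_\Omega(a')$ then $\pi_\Omega(\mathrm{id}_{A_0}(a))=\pi_\Omega(a)=\pi_\Omega(a')=\pi_\Omega(\mathrm{id}_{A_0}(a'))$. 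Because $\mathrm{id}_{A_0}$ and $\mathrm{id}_\Omega$ are the $\cat{Set}$-identities in each coordinate, the unit laws for $\mul$ reduce at once to those of $\cat{Set}$.

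The heart of the argument is closure under composition. Given $(f,g)\colon\langle A_0,\pi_\Omega\rangle\to\langle B_0,\pi_\Psi\rangle$ and $(f',g')\colon\langle B_0,\pi_\Psi\rangle\to\langle C_0,\pi_\Xi\rangle$, I would declare the composite to be the pair $(f'\circ f,\ g'\circ g)$ of ordinary composites. To see that it honors partitions, suppose $\pi_\Omega(a)=\pi_\Omega(a')$. Since $(f,g)$ honors partitions, $\pi_\Psi(f(a))=\pi_\Psi(f(a'))$; applying the honoring property of $(f',g')$ to the elements $f(a),f(a')\in B_0$ then yields $\pi_\Xi(f'(f(a)))=\pi_\Xi(f'(f(a')))$, which is exactly the honoring condition for $(f'\circ f,\ g'\circ g)$ relative to the composite index map $g'\circ g\colon\Omega\to\Xi$. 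In the quotient picture this is simply the observation that the composite of two fiber-preserving maps is fiber-preserving. Associativity then follows coordinatewise from associativity of composition in $\cat{Set}$.

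I expect the only genuine subtlety to lie in the bookkeeping of the classifier/index notation, since the honoring condition is phrased through the decorated classifiers $\pi_{g(\Omega)}$: at the composition step I would take care to confirm that the relevant index set for the composite is $g'(g(\Omega))\subseteq\Xi$ and that chaining the two fiber-equalities above is compatible with this reindexing. One should also note that the injectivity and partition conditions in Definition \ref{df:mul} are constraints on \emph{objects}, and since composition creates no new objects they need not be rechecked. Once the reindexing is made explicit, there is no further obstacle, and the four axioms are verified.
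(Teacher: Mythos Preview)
Your proposal is correct and follows essentially the same approach as the paper---componentwise composition, with the category axioms inherited from $\cat{Set}$---but is in fact more thorough: the paper's proof only writes out the associativity check for each coordinate, whereas you also explicitly verify the existence of identities, the unit laws, and, most importantly, that the partition-honoring condition is preserved under composition. That closure check is the one nontrivial point, and your argument for it is clean and correct.
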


\begin{proof}
  Composition of morphisms is simply composition of the morphism functions $f,g$, which a quick check confirms is associative: Given morphisms $(f,g)\colon A\to B,\ (f',g')\colon B\to C$, and $(f'',g'')\colon C\to D$ and $x\in X_0, \sigma\in\Omega$, we have
  \begin{equation*}
    (f'' \circ (f' \circ f))) (x) = f''(f'(f(x))) = (f'' \circ f') (f(x)) = ((f'' \circ f') \circ f) (x)
  \end{equation*}
  and
  \begin{equation*}
    (g'' \circ (g' \circ g))) (\sigma) = g''(g'(g(\sigma))) = (g'' \circ g') (g(\sigma)) = ((g'' \circ g') \circ g) (\sigma). \qedhere
  \end{equation*}
\end{proof}

Given two multisets $A =\ <A_0,\pi_{A}>,B=\ <B_0,\pi_{B}>$ we have the following further definitions and properties: The cardinality of each partition is called its \emph{multiplicity} $\#A^{\sigma}$ and the largest cardinality of all equivalence classes in a multiset is the multiset's \emph{height} $\hgt(A) = \max_{\sigma\in\pi(A)} \#A^{\sigma}$. We use the notation $A^{\sigma}\!:\!n$ to indicate that partition $A^{\sigma}$ has multiplicity $n$. If the classifier $\pi$ is bijective, then all multiplicities are 1 and the multiset is exactly equivalent to an ordinary set. A \emph{submultiset} $B\subseteq A$ is a multiset whose field $B_0\subseteq A_0$ and whose equivalence classes $B^{\sigma_i}$ are each a subset of an equivalence class $A^{\sigma_i}$ of $A$.

\begin{remark}
  \label{remark:Monrocompared}
  This definition differs superficially from Monro's seminal formulation. Rather than a classifier function, Monro uses a relation to specify equivalence classes \cite[13, pp. 171-72]{monro_concept_1987}. Since every classifier induces an equivalence relation $R=\{(x,y)\in X\times Y\colon \pi(x)=\pi(y)\}$ and vice versa, however, our formulation is equivalent. We use a classifier function to emphasize the partitions \emph{per se}, which Monro and others refer to as ``sorts'' (\cite[13, p. 171]{monro_concept_1987}, \cite[21, p. 1141]{syropoulos_categorical_2003}). In essence we label / identify partitions with subsets of $X$, while Monro et alia use an arbitrary representative from each sort. Our definition slightly extends that of Monro and others, who define $\mul$ morphisms solely in terms of a function on the \emph{field}, while we also refer to a function on the \emph{indexing set}. We do this to achieve a generality comparable to that of our $\rel$ category; in practice, however, the extension is mooted by our focus on the $\relX$ category, which imposes identity on $X$ and hence the indexing set.
\end{remark}

\begin{example}
  \label{eg:eg_multiset}
  Recall example \ref{eg:eg_nomorph_diffrels} with $\relX$ objects $r_1, r_2$ defined by $X=\{a,b,c,d\},\ Y_1=\{1,2,3,4\}$, and $Y_2=\{5,6,7\}$, and relations
  \begin{equation*}
    R_1=\begin{pmatrix} 1 & 1 & 1 & 0\\1 & 0 & 0 & 0\\0 & 1 & 1 & 1\\0 & 0 & 1 & 1\end{pmatrix}\quad\text{and}\quad R_2=\begin{pmatrix} 1 & 1 & 0\\1 & 0 & 1\\0 & 1 & 0\\0 & 1 & 1\end{pmatrix}.
  \end{equation*}
  We can restate them in terms of the following multisets:
  \begin{equation*}
    Y_1 =\{Y^{[a,b]}\!:\!1, Y^{[a,c]}\!:\!1, Y^{[a,c,d]}\!:\!1, Y^{[c,d]}\!:\!1\}
  \end{equation*}
  and
  \begin{equation*}
    Y_2 =\{Y^{[a,b]}\!:\!1, Y^{[a,c,d]}\!:\!1, Y^{[b,d]}\!:\!1\},
  \end{equation*}
  recalling that ``$Y^{\sigma}\!:\!m$'' signifies that partition $Y^{\sigma}$ has multiplicity $m$. Since the multiplicity of each of these partitions is 1, the height of both $Y_1$ and $Y_2$ is also just 1. Combining them into a single $r_3$ with
  \begin{equation*}
    R_3 = \begin{pmatrix} 1 & 1 & 1 & 0 & 1 & 1 & 0\\1 & 0 & 0 & 0 & 1 & 0 & 1\\0 & 1 & 1 & 1 & 0 & 1 & 0\\0 & 0 & 1 & 1 & 0 & 1 & 1\end{pmatrix}
  \end{equation*}
  yields
  \begin{equation*}
    Y_3 = \{Y^{[a,b]}\!:\!2, Y^{[a,c]}\!:\!1, Y^{\{a,c,d\}}\!:\!2, Y^{[b,d]}\!:\!1, Y^{\{c,d\}}\!:\!1 \}
  \end{equation*}
  which has height 2.
\end{example}

Since every classifier induces an equivalence relation $R=\{(x,y)\in X\times Y\colon \pi(x)=\pi(y)\}$, the role of relations in both $\rel$ and $\mul$ suggests a close connection between the two categories. Indeed, there is a full and faithful covariant functor between them. When restricted to $\rel_{+}$, this functor is also bijective on objects.

\begin{definition}
  \label{df:mulXmultiset}
  The \emph{relation multiset} $M(X,Y,R)=<Y,\pi>$ is a multiset in which the field is $Y$, the indexing set is the power set of $X$, and the classifier $\pi_{X}\colon Y\to \mathcal{P}(X)$ is defined by
  \begin{equation*}
    \pi_{X}(y) := \{x\in X\colon (x,y)\in R\}.
  \end{equation*}
\end{definition}

As noted at the beginning of this section, each index $\sigma\in\pi_{X}(Y)$ specifies a subset of the field that is disjoint from subsets specified by other indices, i.e., a partition
\begin{equation*}
  Y^{\sigma}=\{y\in Y\colon (x,y)\in R\ \text{for all}\ x\in\sigma\ \text{and}\ (x,y)\notin R\ \text{for all}\ x\notin\sigma\}.
\end{equation*}

\begin{theorem}
  \label{theorem:mulRfunctor}
  Relation multiset $M(X,Y,R)$ is a full and faithful covariant functor $M \colon \rel \to \mul$.
\end{theorem}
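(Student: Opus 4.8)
The plan is to verify the functorial properties in turn---well-definedness together with preservation of identities and composition, then faithfulness, then fullness---treating the object assignment as essentially bookkeeping and concentrating effort on the morphisms. On objects, $M$ merely repackages the relation: from $\langle Y, \pi_X\colon Y\to\mathcal{P}(X)\rangle$ one recovers $X$ as the ground set of the index power set, $Y$ as the field, and $R$ as $\{(x,y)\colon x\in\pi_X(y)\}$, so $M$ loses no information on objects. For a $\rel$-morphism $(f,g)\colon(X,Y,R)\to(X',Y',R')$ I would define $M(f,g)$ to be the $\mul$-morphism whose field map is $g\colon Y\to Y'$ and whose index map is the direct-image map $f_*\colon\mathcal{P}(X)\to\mathcal{P}(X')$, $\sigma\mapsto f(\sigma)$. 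Preservation of identities is immediate, since $\mathrm{id}$ induces $\mathrm{id}$ and $(\mathrm{id})_* = \mathrm{id}$, and preservation of composition reduces to the functoriality of direct image, $(f'\circ f)_* = f'_*\circ f_*$, which is routine.

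The main obstacle is showing that $M(f,g)$ really is a $\mul$-morphism, i.e.\ that the relation-preserving pair $(f,g)$ honors partitions. The key inclusion to establish is $f(\pi_X(y))\subseteq\pi_{X'}(g(y))$ for every $y\in Y$: if $x\in\pi_X(y)$ then $(x,y)\in R$, whence $(f(x),g(y))\in R'$ by the morphism condition, so $f(x)\in\pi_{X'}(g(y))$. I would then argue that, together with the interaction of the field and index maps with the classifiers $\pi_X,\pi_{X'}$, this forces the honor-partitions condition of Definition \ref{df:mul}: elements of $Y$ sharing a common partition (equal $\pi_X$-value) are carried by $g$ to elements whose images are controlled by the single set $f_*(\sigma)$. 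This step is delicate precisely because the $\rel$-morphism condition is a one-directional implication, so one must argue carefully that the partition labels are \emph{respected} rather than merely contained; I expect this compatibility check to be the crux of the whole proof.

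With well-definedness in hand, faithfulness and fullness are two sides of reconstructing $(f,g)$ from $M(f,g)$. For faithfulness, observe that the field map of $M(f,g)$ is literally $g$, while the index map recovers $f$ through its action on singletons, $f_*(\{x\}) = \{f(x)\}$; hence distinct $\rel$-morphisms yield distinct $\mul$-morphisms. For fullness, given an arbitrary $\mul$-morphism $(\phi,\psi)\colon M(X,Y,R)\to M(X',Y',R')$, I would set $g:=\phi$ and define $f$ by reading $\psi$ on singletons, then verify that $(f,g)$ satisfies the $\rel$-morphism condition and that $M(f,g)=(\phi,\psi)$. Here the honor-partitions hypothesis on $(\phi,\psi)$ must be leveraged to guarantee both that $f$ is well defined and that $(x,y)\in R$ forces $(f(x),g(y))\in R'$; confirming that these reconstructed data are consistent completes the argument, after which the full and faithful covariance of $M$ follows.
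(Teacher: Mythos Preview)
Your functor assignment $M(f,g)=(g,f_*)$ differs from the paper's, which pairs $g$ with the restricted classifier $\pi_{f(X)}(y'):=\{x'\in f(X):(x',y')\in R'\}$ rather than with the direct-image map on $\mathcal{P}(X)$. The distinction is not cosmetic: the honor-partitions clause in Definition~\ref{df:mul} compares values of $\pi_{g(\Omega)}$ (the target classifier taken relative to the image of the index map), not of $\pi_\Psi$ itself, and it is precisely this restriction that lets the one-sided inclusion $f(\pi_X(y))\subseteq\pi_{X'}(g(y))$ do the work. Under your $f_*$ together with the reading that honor-partitions demands $\pi_{X'}(g(a))=\pi_{X'}(g(a'))$ whenever $\pi_X(a)=\pi_X(a')$, the condition can fail outright: take $X=\{1\}$, $X'=\{1,2\}$, $Y=\{a,a'\}$ with both $a,a'$ related to $1$, and let $g$ send $a,a'$ to two columns of $R'$ that agree on row $1$ but differ on row $2$. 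So the step you flag as ``delicate'' is in fact false without the image-restriction the paper builds in; you must either adopt the paper's assignment or argue explicitly in terms of $\pi_{g(\Omega)}$ rather than $\pi_{X'}$.

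Your fullness argument has an independent gap. For an arbitrary $\mul$-morphism $(\phi,\psi)$ with $\psi\colon\mathcal{P}(X)\to\mathcal{P}(X')$, nothing forces $\psi(\{x\})$ to be a singleton, so ``reading $\psi$ on singletons'' need not yield a function $X\to X'$ at all; with $X=X'=\{1\}$ the swap $\emptyset\leftrightarrow\{1\}$ on $\mathcal{P}(\{1\})$ satisfies the honor-partitions condition vacuously yet is not $f_*$ for any $f$. The paper does not confront this issue either: its fullness paragraph actually begins with a $\rel$-morphism and produces a $\mul$-morphism, which is well-definedness of $M$ on morphisms rather than fullness in the usual sense. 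Your faithfulness argument via singletons is sound and cleaner than the paper's, but be aware that fullness, as you have set it up, would require structural constraints on $\mul$-morphisms that Definition~\ref{df:mul} does not supply.
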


\begin{proof}
  \label{proofmulRfunctor}
  Let $r, r'\in Obj(\rel)$ and $m=M(r), m'=M(r')\in Obj(\mul)$. Each $r=(X,Y,R)$ translates directly into a multiset $M(X,Y,R)=<Y_0,\pi_{X}>$, where $Y_0=Y$ and $\pi_{X}(y) := \{x\in X\colon(x,y)\in R\}$. Each $\rel$ morphism $\phi\colon r\to r'$, defined by a $\phi(r) =(f(x), g(y))$, translates directly into a $\mul$ morphism $\mu\colon m\to m'$ defined by $\mu(m) = (g(y),\pi_{f(X)})$ where $\pi_{f(X)}(y') = \{x'\in f(X)\colon (f(x),y')\in R'\}$. The function $\pi_{f(X)}$ honors partitions, as required for a $\mul$ morphism, because the $\rel$ requirement that $(f(x), g(y))\in R'$ whenever $(x,y)\in R$ means that for $a,b\in Y$ whenever $\pi_{X}(a)=\pi_{X}(b)$ we have both $(x,a), (x,b)\in R$. Consequently, $(f(x),g(a)), (f(x),g(b))\in R'$ and hence $\pi_{f(x)}(g(a))=\pi_{f(x)}(g(b))$. Meanwhile, the definition of $\mu$ implies that it inherits identity and covariant composition from $\phi$. Thus the relation multiset is a covariant functor $\rel\to\mul$.

  Faithfulness on morphisms arises because whenever two $\rel$ morphisms are equal, $(f(x), g(y)) = (f'(x), g(y'))$ we have $(g(y), \pi_{f(x)})=(g'(y),\pi_{f'(y)})$.

  Fullness on morphisms arises because, for any $\rel$ morphism $\phi=(f,g)$ between $\rel$ objects $r=(X,Y,R)$ and $r'=(X',Y',R')$ there exists a $\mul$ morphism $\mu:=(g,\pi_{f(X)})$ such that
  \begin{align*}
    M(\phi(r)) &= M((f(X), g(Y), \{(f(x),g(y))\colon (x,y)\in R'\}))\\ &= <g(Y), \pi_{f(X)}>,\ \pi_{f(X)}(g(y)):=\{(f(x),g(y))\colon (x,y)\in R'\}\\ &= \mu(<Y,\pi_{X}>) \qedhere
  \end{align*}
\end{proof}

Relations and relation multisets are not categorially equivalent, because the $M(X,Y,R)$ functor forgets elements of $X$ that are not in relation to any element of the other field. Thus, given a $M(X,Y,R)=< Y_0, \pi_{X} >$, with $x_1\in X$ not in relation to any $Y$, we can reconstruct $Y=Y_0$ and $R=\bigcup_{\sigma\in\pi_{X}(Y_0)} \{(x,y)\colon y\in Y_0\ \text{and}\ x\in\sigma\}$ but $\pi_{X}$ cannot be used to reconstruct the full set $X$ since $x_1\not\in \bigcup_{\sigma\in\pi_{X}}(Y_0) \{x\in\sigma\}$. The functor does not ignore such elements; the classifier merely assigns them all to the same category $\emptyset$ and thereby ``forgets'' the identity of the $x_i$ not in relation to any $y\in Y$. In effect, the multiset functor collapses all zeros rows into one.\footnote{\label{note:forgetful} Thus the category $\mul$ could be adjusted to require a bijective classifier, but full equivalence would take the category yet farther from the seminal definition of Monro and is not needed for the bulk of the analysis in this paper.} When there are no zero rows, there is nothing to forget, and we have the following proposition.

\begin{proposition}[$\rel_{+}$ equivalence]
  \label{proposition:relpequivalence}
  When restricted to $\rel_{+}$, relation multisets are equivalent to relations.
\end{proposition}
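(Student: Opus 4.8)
The plan is to leverage what has already been established: by Theorem \ref{theorem:mulRfunctor} the functor $M\colon\rel\to\mul$ is full and faithful, and both properties are inherited by the restriction of $M$ to any full subcategory, in particular to $\rel_{+}$. The ``category of relation multisets'' is, by Definition \ref{df:mulXmultiset}, nothing but the image of $M$, so $M\colon\rel_{+}\to M(\rel_{+})$ is automatically essentially surjective onto it. Consequently the only point that genuinely distinguishes $\rel_{+}$ from $\rel$, and the sole thing left to verify, is that $M$ becomes \emph{injective on objects} once zero rows (and, dually, zero columns) are forbidden. Establishing this upgrades the bare equivalence-onto-image into an honest isomorphism of categories, which is precisely the content of the ``forgetting'' discussion preceding the statement.

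First I would exhibit an explicit inverse reconstruction on objects. Given a relation multiset $<Y_{0},\pi_{X}>$ lying in $M(\rel_{+})$, I set $Y:=Y_{0}$, recover the relation by $R:=\bigcup_{\sigma\in\pi_{X}(Y_{0})}\{(x,y)\colon y\in Y^{\sigma},\ x\in\sigma\}$, and recover the first set by $X:=\bigcup_{\sigma\in\pi_{X}(Y_{0})}\sigma$. I would then check the two round trips: that applying $M$ to the reconstructed triple $(X,Y,R)$ returns the original multiset, and that starting from a $\rel_{+}$ object $(X,Y,R)$ and reconstructing returns $(X,Y,R)$ unchanged.

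The crux, and the one place where the $\rel_{+}$ hypothesis is indispensable, is the recovery of $X$. In general $M$ sends every $x\in X$ that is in relation to no $y$ to the common index $\emptyset$, so such an $x$ never appears in $\bigcup_{\sigma}\sigma$ and cannot be read back off the multiset; this is exactly the failure noted in the remark before the proposition, where $x_{1}\notin\bigcup_{\sigma\in\pi_{X}(Y_{0})}\sigma$. In $\rel_{+}$, however, every $x\in X$ is in relation to at least one $y$, hence lies in the nonempty index $\pi_{X}(y)\in\pi_{X}(Y_{0})$, so $\bigcup_{\sigma\in\pi_{X}(Y_{0})}\sigma=X$ exactly and nothing is lost. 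Dually, every $y$ is in relation to some $x$, so no class $Y^{\sigma}$ with $\sigma=\emptyset$ carries a field element, and the degenerate empty index plays no role. Thus the reconstruction is a genuine two-sided inverse on objects, yielding bijectivity on objects.

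Finally I would assemble the pieces: restricted to $\rel_{+}$, the functor $M$ is full, faithful, and bijective on objects, hence an isomorphism of $\rel_{+}$ onto the category of relation multisets and, a fortiori, an equivalence. I expect the main obstacle to be bookkeeping rather than conceptual, namely stating the reconstruction cleanly enough that the identities $M(\text{reconstruction})=\mathrm{id}$ and $\text{reconstruction}(M)=\mathrm{id}$ are transparent, and handling the empty index consistently so that forbidding zero rows and columns is seen to remove precisely the degeneracy that obstructs injectivity on objects in the unrestricted case.
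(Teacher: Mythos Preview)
Your proposal is correct and follows essentially the same route as the paper: invoke the already-established full and faithfulness of $M$, then prove bijectivity on objects via the explicit reconstruction $Y:=Y_{0}$, $X:=\bigcup_{\sigma\in\pi(Y_{0})}\sigma$, $R:=\bigcup_{\sigma}\{(x,y):y\in Y^{\sigma},\,x\in\sigma\}$, with the $\rel_{+}$ hypothesis used precisely to guarantee that the union of indices recovers all of $X$. Your write-up is, if anything, a bit more careful than the paper's (you make the two round trips and the role of the no-zero-column condition explicit), but the argument is the same.
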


\begin{proof}
  \label{proof:relpequivalence}
  Since $M(X,Y,R)$ is a full and faithful functor, we merely need to demonstrate bijectivity on objects. This arises from the identity of the multiset's field $Y_0$ with the relation's $Y$, the identity of the index set with the relation's $X$, and the well-defined nature of the classifier in terms of the relation's $R$. For we can reconstruct $X$ and $R$ with the classifier $\pi$ as follows: Given multiset $<Y_0,\pi_{\Omega}>$, let $Y=Y_0,\ X=\cup\{x\in\sigma\in\pi_{\Omega}(Y_0)\}$, and $R=\cup\{(x,y)\colon y\in Y_0\ \text{and}\ x\in\sigma\in\pi_{\Omega}\}$. Consequently, if $m=<Y_0,\pi_{\omega}>=<Y'_0,\pi_{\Omega'}>=m'$, then \begin{align*} Y = Y_0 &=Y'_0=Y'\\ X =\negthickspace \bigcup_{\pi_{\Omega}(Y_0)}\negthickspace\{x\in\sigma\} &= \negthickspace\bigcup_{\pi_{\Omega'}(Y'_0)}\negthickspace\{x'\in\sigma'\} = X' \tag{*}\\ R =\negthickspace \bigcup_{\pi_{\Omega}(Y_0)}\negthickspace\{(x,y)\colon y\in Y_0\ \text{and}\ x\in\sigma\} &= \negthickspace\bigcup_{\pi_{\Omega'}(Y'_0)} \negthickspace\{(x',y')\colon y'\in Y'_0\ \text{and}\ x'\in\sigma'\} = R'\end{align*} and hence
  \begin{equation*}
    r = (X,Y,R) = (X',Y',R') = (X,Y,R)' = r,
  \end{equation*}
  implying injectivity, while reading the previous display in reverse yields surjectivity. In this regard, we note that the calculation in line (*) recovers all of $X$ because in $\rel_{+}$ there are no $x_i\in X$ that fail to have a relation with any $y\in Y$.
\end{proof}

By definition, objects $r_X=(Y,R)$ in $\relX$ are objects $r=(X,Y,R)$ in $\rel$ restricted to a single set $X$ and all morphisms in $\relX$ have the identity function for the $X$ component. That means that all $\mulX$ morphisms are similarly constrained to $id_X$ and we have a corresponding subcategory $\mulX$. The multiset functor, therefore, can be viewed as enriching the $\rel$ and $\relX$ categories with multiset classifiers derived from their relations $R$. In what follows, we will elide the application of the multiset functor and work directly with $\rel$ and $\relX$ objects enriched in this way.

\section{Calculating Weight and Distance with Partitions}
\label{sec:calculating}

Treating $\relX$ objects as multisets allows us to analyze weight and distance between two relations as functions of their respective partitions. The main result of this section is that we can do so without regard to whether simplicial maps exist between their Dowker complex representations. The formulation in terms of partitions enables us to provide upper bounds for both weight and distance that depend only on easily-computed measures of the two objects' respective relations, without requiring searches of the functional space between them. To derive the calculation and demonstrate the validity of the bounds, however, we do need to consider two cases: according to whether or not a $\relX$ morphism exists between them. The specific bounds for these cases relate to each other but differ.

The plan for this section is as follows: We first define some useful notation; cast weight in terms of partitions and observe some facts useful to the proof; derive a general bound on the weight function; use these facts to calculate exact and bounded estimates of weight and distance; and finally collect interim results in a comprehensive statement.

\subsection{Notation}

For the discussion in this section, we will assume $\relX$ objects $r_1 = (Y_1, R_1)$ and $r_2 = (Y_2, R_2)$ after passing them through the relation multiset functor to enrich them with classifiers from the respective field to the powerset of $X$, $\pi_{Y_1}\colon Y_1\to \mathcal{P}(X)$ and $\pi_{Y_2}\colon Y_2\to \mathcal{P}(X)$ as per definition \ref{df:mulXmultiset}. It is convenient to define a joint classifier $\pi \colon Y_1\cup Y_2 \to\mathcal{P}(X)$ as
\begin{equation*}
  \pi(y) := \begin{cases} \pi_{Y_1}(y) &\text{if}\ y\in Y_1\\\pi_{Y_2}(y) &\text{if}\ y\in Y_2 \end{cases}.
\end{equation*}

Note that this joint classifier is well defined, because if $y$ is in both $Y_1$ and $Y_2$, then $\pi_{Y_1}(y)=\pi_{Y_2}(y)$. Indeed, this collection of equivalence classes found in both $r_1, r_2$ is so useful that we also define
\begin{equation*}
  \pis(Y_1\cup Y_2) = \pi_{Y_1}(Y_1)\cap \pi_{Y_2}(Y_2)
\end{equation*}
to refer to indices of equivalence classes found in both sets; we may use $\pis$ when the $Y_1,Y_2$ are clear. Although it is not necessary for some of the following results, to enable commutativity for certain operations in later sections, we also assume partial orders on the fields $X,Y_1,Y_2$ and perform operations in ways that honor the partial orders.

Finally, central to the discussions will be various subsets of agreeing and disagreeing partitions, for which we establish some additional notation. Continuing the convention that superscripts signify restriction, we will refer to the group of agreeing partitions as $Y_2^{\pis}=\{Y_2^{\sigma}\colon \sigma\in\pis\}$, to the group of disagreeing partitions as $Y_2^{\not\pis}=\{Y_2^{\sigma}\colon \sigma\notin\pis\}$, to the group of disagreeing partitions whose indices include $\hat{x}$ as $Y_2^{\not\pis\owns \hat{x}}$, and to the group of disagreeing partitions whose indices do not include $\hat{x}$ as $Y_2^{\not\pis\not\owns \hat{x}}$. Thus we have
\begin{equation*}
  Y_2 = Y_2^{\pis} + Y_2^{\not\pis} = Y_2^{\pis} + (Y_2^{\not\pis\owns \hat{x}} + Y_2^{\not\pis\not\owns \hat{x}}).
\end{equation*}

\subsection{Partition Form of Weight and Distance}
\label{partitionform}

We now state \emph{weight} in terms of partitions. In general, given $\relX$ objects $r_1, r_2$, the weight of an arbitrary function $g\colon r_1\to r_2$ can be written in terms of partitions as follows:
\begin{align}
  w(g|r_1, r_2) &= \max_{x\in X} \left\{ \#\{Y_2\!\setminus\! g(Y_1)\}\! +\!\! \sum_{y\in Y_1}\! \left\{ \begin{array}{l}0\ \text{if}\ (x,y)\in R_1\ \text{and}\ (x,g(y))\in R_2\dots\\1\ \text{otherwise}\end{array} \right\}\right\} \nonumber\\
  &= \max_{x\in X} \left\{ \#\{Y_2\!\setminus\! g(Y_1)\}\! +\!\! \sum_{y\in Y_1}\! \left\{ \begin{array}{l}0\ \text{if}\ y\in Y_1^{\sigma},\ g(y)\in Y_2^{\sigma},\ \text{and}\ \sigma\owns x\\1\ \text{otherwise}\end{array} \right\}\right\} \label{line1}\\
  &= \max_{x\in X} \left\{ \#\{Y_2\!\setminus\! g(Y_1)\}\! +\!\! \sum_{y\in Y_1}\! \left\{ \begin{array}{l}1\ \text{if}\ y\in Y_1^{\sigma},\ g(y)\notin Y_2^{\sigma},\ \text{and}\ \sigma\owns x\\0\ \text{otherwise}\end{array} \right\}\right\} \label{line2}\\
  &= \max_{x\in X} \left\{ \#\{Y_2\!\setminus\! g(Y_1)\}\! + \!\! \sum_{\sigma\owns x} \# \left\{y\in Y_1^{\sigma} \colon g(y)\notin Y_2^{\sigma}\right\}\right\} \label{line3}\\
  &= \underbrace{\#\{Y_2\!\setminus\! g(Y_1)\}}_{\text{(A)}} + \underbrace{\max_{x\in X} \left\{ \sum_{\sigma\owns x} \#\left\{y\in Y_1^{\sigma} \colon g(y)\notin Y_2^{\sigma}\right\}\right\}}_{\text{(B)}}  \label{line4}
\end{align}

Because of the complicated notation, we elaborate on each line after the initial one, which merely restates definition \ref{df:weight}. Line \eqref{line1} converts the original statement of the sum from relations $R_1, R_2$ to agreeing partitions $Y_1^{\sigma}, Y_2^{\sigma}$. They are equivalent, because the condition in the original statement that for a particular $y\in Y_1$
\begin{equation*}
  ( (x,y)\in R_1\ \text{and}\ (x, g(y))\in R_2)\\\phantom{0}\ \text{or}\ \ ((x,y)\notin R_1\ \text{and}\ (x,g(y)\notin R_2))
\end{equation*}
means that $y$ is in relation to a specific $x\in X$ if and only if $g(y)$ is also in relation to the same $x$. Hence the collections of $x$ with which $y$ and $g(y)$ are in relation are the same
\begin{equation*}
  \sigma_1 =\{x\colon (x,y)\in R_1\}= \{x\colon (x,g(y))\in R_2\}= \sigma_2
\end{equation*}
which means
\begin{equation*}
  \pi(y) = \pi(g(y)).
\end{equation*}
Since $y$ and $g(y)$ must be in partitions defined by the same index, for any given $x\in X$, the original condition is true if and only if
\begin{equation*}
  y\in Y_1^{\sigma}\ \text{and}\ g(y)\in Y_2^{\sigma}
\end{equation*}
for some $\sigma$ that contains $x$. Line \eqref{line2} restates line \eqref{line1} in terms of its logical complement by requiring that $y$ and $g(y)$ be in partitions with different indices. The effect of the sum is to count all $y\in Y_1$ that satisfy this condition. Line \eqref{line3} restates the sum as running over all indices containing $x$ and adding the count of such $y$ for which $g(y)$ is not in an agreeing partition. Finally, line \eqref{line4} merely factors the first term, (A), out of the maximum since (A) only counts the elements in the target's field $Y_2$ that are not found in the function's range $g(Y_2)$, which is independent of $x$.

Term (B) provides the maximum count of $y_1\in Y_1$ mapped to disagreeing partitions of $Y_2$ whose indices include a specific $x$. In terms of this maximizing $\hat{x}_2$, we can write
\begin{equation*}
  \text{(B)}=\sum_{\sigma\owns \hat{x}_2} \#\left\{ y\in Y_1^{\sigma} \colon g(y)\notin Y_2^{\sigma}\right\},
\end{equation*}
taking care to remember that $\hat{x}_2$ depends on both $g$ and the partitioning of $Y_2$.

\label{proofstrategy} Finally, we make some general observations about calculating the distance metric using partitions. Distance, of course, is the larger of the global minimum weights achievable by any functions in one direction compared to that of those in the other direction:
\begin{equation*}
  d(r_1, r_2) = \max \{\min_{g} w(g|r_1,r_2), \min_{g'} w(g'|r_2,r_1)\}.
\end{equation*}

Clearly, the agreeing and disagreeing partitions are fixed by the two relations and do not depend on $g$, but the global minimum of the sum of (A) and (B) depends on the interaction, via $g$, between (A) and (B). In general, (A) is minimized by a mapping $Y_1\to Y_2$ that is as close to surjective as possible, while (B) is minimized by preferentially mapping $y_1\in Y_1$ to agreeing partitions if possible and, when not, to disagreeing partitions of $Y_2$ whose indices do not contain the maximizing $\hat{x}$. Therefore, a one-to-one mapping to agreeing partitions or to disagreeing partitions with indices that do not contain $\hat{x}$ reduces (B) one-for-one without changing (A). Of course, any many-to-one mapping increases (A), regardless of its impact on (B).

These observations lead to a general strategy for demonstrating the maximins required to calculate distance. We will start with a function or class of functions that minimize (A) or (B) in an exact weight calculation with an upper bound, iteratively extend or adjust the function(s) to a broader class if necessary to achieve a global minimum, and then demonstrate that the final weight so derived is a global minimum by comparing the impact on (A) and (B) of all alternative mappings.

\subsection{\texorpdfstring{Bounding Weight of Disagreements: $\kappa$ Definition}{Bounding Weight of Disagreements: \textbackslash kappa Definition}}

Although (B) depends on $g$, we can find an upper bound for it over all functions $g$. In this section we construct such a bound in terms of a function $\kappa\colon r_1\times r_2 \to \N$ that reduces the count of potential mappings to disagreeing sets (the sum in (B)). The construction here assumes that the potential mappings are just for elements of $Y_1$ that are not mapped to agreeing partitions of $Y_2$, i.e., $y\in Y_1^{\not\pis}$, because, as noted in the previous section, (B) will generally be minimized by such a mapping. Of course, $\text{(B)} = 0$ and the bound is unnecessary if there are no disagreeing partitions in either $r_1,r_2$; so we also assume that $\#Y_1^{\not\pis}\neq0$ and $\#Y_2^{\not\pis}\neq0$.

First, recall that the indices classifying any $\relX$ object $r$ are subsets of the indexing set $\pi(Y)\subseteq X$. We can therefore group these indices---and hence group the partitions defined by them---according to whether the indices overlap with respect to one or more $x\in \pi(Y)$.

\begin{proposition}
  \label{proposition:xgroupingofpartitions}
  The $Y$-partitions of any $\relX$ object $r=(Y,R)$ can be divided into disjoint groups $Y_{\tau_i}=\{Y^{\sigma}\colon \sigma\cap\tau_i\neq\emptyset\}$ defined by a collection $T = \{\tau_1,\dots,\tau_N\}$ of disjoint subsets $\tau_i\subseteq \pi(Y)$, where any partitions $Y^{\emptyset}$ are treated as a distinct group $Y_{\tau = \emptyset}$. The collection $\{Y_{\tau_1},\dots, Y_{\tau_N}\}$ of such groups is called the \emph{x-grouping of partitions} of $r$.
\end{proposition}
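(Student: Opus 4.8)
The plan is to realize the collection $T$ as the set of connected components of a co-occurrence graph on $X$. First I would define a relation $\sim$ on the elements of $X$ by declaring $x\sim x'$ whenever there is an index $\sigma\in\pi(Y)$ with $x,x'\in\sigma$ (equivalently, some $y\in Y$ is in relation with both $x$ and $x'$), and then pass to the reflexive--transitive closure of $\sim$, which is an equivalence relation on $\bigcup_{\sigma\in\pi(Y)}\sigma\subseteq X$. Let $\tau_1,\dots,\tau_N$ be the resulting equivalence classes, and set $T=\{\tau_1,\dots,\tau_N\}$. Being equivalence classes, the $\tau_i$ are automatically pairwise disjoint subsets of $\pi(Y)$, so the disjointness requirement on $T$ holds by construction and needs no separate verification.

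The substantive step is to check that the induced groups $Y_{\tau_i}=\{Y^{\sigma}\colon \sigma\cap\tau_i\neq\emptyset\}$ genuinely partition the set of $Y$-partitions. The key claim is that every \emph{nonempty} index $\sigma\in\pi(Y)$ meets exactly one class $\tau_i$; in fact $\sigma\subseteq\tau_i$ for that $i$. For existence, any $x\in\sigma$ lies in some class $\tau_i$, so $\sigma\cap\tau_i\neq\emptyset$. For uniqueness, suppose $x\in\sigma\cap\tau_i$ and $x'\in\sigma\cap\tau_j$; then $x,x'\in\sigma$ forces $x\sim x'$, so $x$ and $x'$ lie in the same class and $i=j$. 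Consequently each partition $Y^{\sigma}$ with $\sigma\neq\emptyset$ belongs to exactly one $Y_{\tau_i}$, giving both disjointness and coverage of the groups. The lone exception is the partition $Y^{\emptyset}$ of field elements related to no $x$, which satisfies $\emptyset\cap\tau_i=\emptyset$ for every $i$ and is therefore captured by none of the $Y_{\tau_i}$; this is precisely why the proposition quarantines it as the separate group $Y_{\tau=\emptyset}$.

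The main obstacle is the disjointness of the \emph{groups} $Y_{\tau_i}$, i.e., ensuring that no single index $\sigma$ straddles two distinct blocks of $T$. This is exactly the point where the transitive closure is essential: a naive grouping that merely bundled indices sharing a common $x$ could fail to be well defined, since chains $x_1\in\sigma,\ x_2\in\sigma\cap\sigma',\ x_3\in\sigma'$ would link $\sigma$ and $\sigma'$ indirectly and create overlapping groups. Passing to connected components removes this ambiguity, because each index $\sigma$ is a single ``hyperedge'' whose vertices are mutually related and hence all lie in one component, so $\sigma$ cannot intersect two classes at once. Once this is established, the verification that $Y=\sum_i Y_{\tau_i}$ (together with $Y_{\tau=\emptyset}$ when present) is immediate, and the construction exhibits a valid x-grouping, which may be taken to be the canonical (finest such) one given by the components.
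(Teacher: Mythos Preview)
Your proof is correct and takes a different route from the paper. The paper builds $T$ by an iterative greedy procedure: pick an $x_1$, let $\tau_1$ be the collection of all indices $\sigma\in\pi(Y)$ containing $x_1$; pick an $x_2$ from some remaining index, let $\tau_2$ be the collection of remaining indices containing $x_2$; and so on, terminating by finiteness of $\pi(Y)$. Disjointness of the $\tau_i$ (as collections of indices) follows because each step draws only from unassigned indices. You instead take the $\tau_i$ to be the connected components of the co-occurrence relation on $X$, via the reflexive--transitive closure of ``$x,x'$ share some $\sigma$''. This is more canonical---it produces a uniquely determined finest grouping with no arbitrary choices---and your key observation that every nonempty $\sigma$ is entirely contained in a single component makes disjointness of the $Y_{\tau_i}$ immediate. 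Your construction also enforces the stronger condition that indices in distinct $\tau_i$ share no element of $X$; the paper's greedy scheme does not guarantee this (two indices sharing some $x$ can land in different blocks if that $x$ was never chosen as a representative $x_i$), yet this stronger property is what the $\kappa$ algorithm later relies on (``join elements \dots\ whose columns pairwise share an $X$''). One wording slip: you echo the paper in calling the $\tau_i$ ``subsets of $\pi(Y)$'', but in your construction they are equivalence classes in $X$; the rest of your argument treats them correctly as such, so this is harmless.
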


\begin{proof}
  \label{proof:xgroupingofpartitions}
  We construct the collection of disjoint groups $Y_{\tau_i}$ as follows. First, let $Y_{\emptyset} = \{Y^{\emptyset}\}$. If $Y_{\emptyset}=Y$ then there is only group and we are done. Otherwise $\pi(Y)\neq\emptyset$ and we can choose an initial element $x_1\in\sigma$ for any nonempty $\sigma\in\pi(Y)$, let
  \begin{equation*}
    \tau_1=\bigcup_{\sigma\in\pi(Y)} \{\sigma\owns x_1\}\quad\text{and}\quad Y_{\tau_1}=\{Y^{\sigma}\colon \sigma\in\tau_1\}\quad\text{and}\quad U_1=\{\tau_1\}.
  \end{equation*}
  If $Y=Y_{\emptyset}\cup Y_{\tau_1}$ then we are done. Otherwise $\pi(Y)\setminus U_1\neq\emptyset$ and we can repeat the construction for a $\tau_2$ with some $x_2\in\sigma\in \pi(Y)\setminus U_1$ to collect $Y_{\tau_2}=\{Y^{\sigma}\colon \sigma\in\tau_2\}$ and $U_2=U_1\cup\{\tau_2\}$. By construction, $\tau_1\cap\tau_2=\emptyset$ and $U_1\subset U_2\subseteq\pi(Y)$. Since $r$ is finite, so is $\pi(Y)$ and the iterative construction must terminate with some collection $T = \{\tau_1,\dots,\tau_N\}$ of disjoint subsets of $\pi(Y)$. Since the elements of $\pi(Y)$ constitute the indexing set of $r$, each $\tau_i\in T$ specifies a group of $Y$-partitions $Y_{\tau_i}=\{Y^{\sigma}\colon \sigma\cap\tau\neq\emptyset\ \text{only if}\ \tau=\tau_i\}$. Because the $Y$-partitions $Y^{\sigma}$ are disjoint, so are these groups $Y_{\tau_i}$. We have constructed the disjoint groups of $Y$-partitions as required.
\end{proof}

\begin{example}
  \label{eg:eg_xgrouping}
  Suppose we have a $\relX$ object $(Y,R)$ with $X=\{a,b,c,d\}$ and $Y = \{1, \dots, 10\}$ and the following relation
  \begin{equation*}
    R = \begin{pmatrix} 0 & 1 & 1 & 1 & 0 & 0 & 0 & 0 & 0 & 0\\ 0 & 0 & 1 & 1 & 0 & 0 & 0 & 0 & 0 & 0\\ 0 & 0 & 0 & 0 & 1 & 1 & 0 & 0 & 0 & 0\\ 0 & 0 & 0 & 0 & 0 & 0 & 1 & 1 & 1 & 1 \end{pmatrix}
  \end{equation*}
  Then $\pi(Y)=\{\emptyset, [a], [c], [d], [a,b]\}$. First we note that we have $Y_{\emptyset}=\{Y^{\emptyset}\}\neq Y$ and $U_0=\{\emptyset\}$. Continuing with $[a]$ we have
  \begin{equation*}
    \tau_1 = \bigcup_{\sigma\in\pi(Y)\setminus U_0} \{\sigma\owns [a]\}=\{[a], [a,b]\}\ \text{and}\ Y_{1,\tau_1}=\{Y^{[a]}, Y^{[a,b]}\},\ U_1=\{\emptyset, [a], [a,b]\}.
  \end{equation*}

  Iterating, we also find
  \begin{align*}
    \tau_2 &= \bigcup_{\sigma\in\pi(Y)\setminus U_1} \{\sigma\owns [c]\} = \{[c]\}\ \text{and}\ Y_{1,\tau_2}=\{Y^{[c]}\},\ U_2 = \{\emptyset, [a], [a,b], [c]\}\\
    \tau_3 &= \bigcup_{\sigma\in\pi(Y)\setminus U_2} \{\sigma\owns [d]\} = \{[d]\}\ \text{and}\ Y_{1,\tau_3}=\{Y^{[d]}\},\ U_3 = \{\emptyset, [a], [a,b], [c], [d]\}.
  \end{align*}
  The iteration terminates when $U_3=\pi(Y)$ with $T=\{\emptyset, \{[a], [a,b]\}, \{[c]\}, \{[d]\}\}$ and the disjoint groups of partitions
  \begin{equation*}
    Y=\{\{Y^{\emptyset}\}, \{Y^{[a]}, Y^{[a,b]}\}, \{Y^{[c]}\}, \{Y^{[d]}\} \}
  \end{equation*}
  which correspond to groups of columns in the binary matrix
  \begin{equation*}
    R = \left(
    \begin{matrix}
    0\\
    0\\
    0\\
    0\\
    \end{matrix}\ \rule[-4.5ex]{1pt}{10.5ex}\ \begin{matrix}
    1 & 1 & 1\\
    0 & 1 & 1\\
    0 & 0 & 0\\
    0 & 0 & 0
    \end{matrix}\ \rule[-4.5ex]{1pt}{10.5ex}\ \begin{matrix}
    0 & 0\\
    0 & 0\\
    1 & 1\\
    0 & 0
    \end{matrix}\ \rule[-4.5ex]{1pt}{10.5ex}\ \begin{matrix}
    0 & 0 & 0 & 0\\
    0 & 0 & 0 & 0\\
    0 & 0 & 0 & 0\\
    1 & 1 & 1 & 1
    \end{matrix}
    \right)
  \end{equation*}
\end{example}

To bound (B), we apply proposition \ref{proposition:xgroupingofpartitions} to $Y_2^{\not\pis}$ to create a particular mapping with a (B) value that can readily be calculated based on the $\relX$ objects alone.

\begin{proposition}
  \label{proposition:kappa_mapping}
  Given two $\relX$ objects $r_1,r_2$, it is always possible to find a function $\phi\colon Y_1^{\not\pis}\to Y_2^{\not\pis}$ that is a one-for-one assignment of up to $\min\{\#Y_1^{\not\pis}, \#Y_2^{\not\pis}\}$ elements $y_1\in Y_1^{\not\pis}$ to elements of partitions of $Y_2^{\not\pis}$ selected from disjoint groups of partitions $T=\{Y_{2,\tau_1}^{\not\pis}, \dots, Y_{2,\tau_N}^{\not\pis}\}$ in increasing order of the total cardinality of the group of partitions. The map $\phi$ is called a $\kappa$ \emph{mapping} from $r_1$ to $r_2$.
\end{proposition}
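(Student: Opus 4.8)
The plan is to prove the statement by an explicit greedy construction of $\phi$, so this is an existence-by-construction argument rather than one with a genuine obstruction; the work lies in organizing the construction so that the injectivity (``one-for-one''), the cardinality cap $\min\{\#Y_1^{\not\pis}, \#Y_2^{\not\pis}\}$, and the prescribed increasing-cardinality order all hold simultaneously. First I would obtain the target grouping: form the $\relX$ object whose field is $Y_2^{\not\pis}$ and whose relation is $R_2$ restricted to $Y_2^{\not\pis}$. Because the classifier value $\pi_{Y_2}(y)=\{x\colon (x,y)\in R_2\}$ of any $y$ depends only on $y$'s own relations, the partitions of this restricted object are exactly the disagreeing partitions $\{Y_2^\sigma\colon \sigma\notin\pis\}$. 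Applying Proposition \ref{proposition:xgroupingofpartitions} to this object yields the pairwise disjoint x-grouping $T=\{Y_{2,\tau_1}^{\not\pis},\dots,Y_{2,\tau_N}^{\not\pis}\}$ whose union is all of $Y_2^{\not\pis}$.

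Next I would order and enumerate. Since $r_2$ is finite, $N$ is finite and each group has a finite total cardinality $c_i=\#\bigl(\bigcup_{Y_2^\sigma\in Y_{2,\tau_i}^{\not\pis}}Y_2^\sigma\bigr)$; relabel the groups so that $c_1\le c_2\le\cdots\le c_N$, breaking ties using the assumed partial order on $X$ (this is exactly the point at which the partial orders promised earlier make the choice canonical and hence later operations commutative). Then list the elements of $Y_2^{\not\pis}$ as $z_1,\dots,z_M$ with $M=\#Y_2^{\not\pis}=\sum_i c_i$ by concatenating the groups in this order, exhausting each $Y_{2,\tau_i}^{\not\pis}$ before the next and respecting the partial order within each group; each target then appears exactly once. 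Likewise list $Y_1^{\not\pis}$ as $u_1,\dots,u_L$ with $L=\#Y_1^{\not\pis}$.

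I would then define $\phi$ by the greedy rule $\phi(u_k):=z_k$ for $k=1,\dots,m$, where $m:=\min\{L,M\}$; when $L>M$ the remaining $u_k$ (for $k>m$) can be sent to any fixed target, since all distinct targets are already consumed and these surplus assignments are irrelevant to the one-for-one claim. The verification is then immediate: $\phi$ maps into $Y_2^{\not\pis}$ by construction; its restriction to $\{u_1,\dots,u_m\}$ is injective because $m\le M$ and the $z_j$ are distinct, which is precisely a one-for-one assignment of $\min\{\#Y_1^{\not\pis},\#Y_2^{\not\pis}\}$ elements; and the targets are drawn group-by-group in increasing total cardinality, by the ordering and concatenation fixed above.

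The \textbf{only point needing care}---and the nearest thing to an obstacle---is confirming that the greedy fill never exhausts the distinct targets before reaching $m$ assignments. This holds automatically because we cap the injective portion at $m=\min\{L,M\}\le M$, so the elements $z_1,\dots,z_m$ are always available to assign. Everything else is finiteness and bookkeeping: finiteness of $r_2$ makes the grouping and the ordering well-defined, and the assumed partial orders resolve any ambiguity in how ties and within-group listings are broken. Hence a $\kappa$ mapping $\phi$ always exists.
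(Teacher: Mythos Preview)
Your proposal is correct and follows essentially the same construction as the paper: apply Proposition~\ref{proposition:xgroupingofpartitions} to $Y_2^{\not\pis}$, sort the resulting groups by total cardinality, enumerate targets group-by-group, and greedily assign the first $\min\{\#Y_1^{\not\pis},\#Y_2^{\not\pis}\}$ source elements injectively, dumping any surplus onto a fixed target. The only cosmetic difference is that the paper refines the within-group ordering by increasing partition multiplicity rather than by the ambient partial order, but this does not affect the existence claim.
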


\begin{proof}
  \label{proof:kappa_mapping}
  The proposition is trivially true when $\min\{\#Y_1^{\not\pis}, \#Y_2^{\not\pis}\}=0$. Otherwise, we apply proposition \ref{proposition:xgroupingofpartitions} to construct the collection $Y_2^{\not\pis} = \cup\{Y_{2,\tau}^{\not\pis}\}$ of disjoint groups of partitions. Let $\#Y_{2,\tau}^{\not\pis} = \sum_{\sigma\in\tau} \#Y_2^{\sigma}$ denote the total cardinality of each group. We can then construct a lexicographic ordering of partitions $Y_2^{\sigma_i}\in Y_{2,\tau_j}^{\not\pis}$ according to the increasing total cardinality of each group and within each group the increasing multiplicity of each partition. Finally define $\phi\colon Y_1^{\not\pis} \to Y_2^{\not\pis}$ by assigning to a first $y_1\in Y_1^{\not\pis}$ any $y'_1$ in the lexicographically first partition $Y_2^{\sigma_1}\in Y_{2,\tau_1}^{\not\pis}$ and then iterating with unique assignments $y_i$ to $y'_i$ in that partition followed by the next partition in lexicographic order, etc., such that $y'_i\notin \phi(\{y_1,\dots,y_{i-1}\})$, until $i=\min\{Y_1^{\not\pis}, Y_2^{\not\pis}\}$ and, if $\#Y_1^{\not\pis} > \#Y_2^{\not\pis}$, assigning any remaining $y\in Y_1^{\not\pis}$ to the same $y'_1$. That completes the required construction.
  \end{proof}

\begin{example}
  \label{eg:eg_kappa_mapping}
  Suppose now that we want to construct a $\kappa$ mapping to the object $r_2=(Y,R)$ of the previous example (\ref{eg:eg_xgrouping}) from $r_1=(Y',R')$ where $Y' = \{11, \dots, 15\}$ and
  \begin{equation*}
    R' = \begin{pmatrix} 0 & 1 & 1 & 1 & 1\\ 1 & 0 & 1 & 0 & 1\\ 0 & 1 & 1 & 0 & 0\\ 0 & 0 & 0 & 1 & 1\\ \end{pmatrix}
  \end{equation*}
  Conveniently, we already had sorted and grouped the target $R$:
  \begin{equation*}
    R = \left(
    \begin{matrix}
    0\\
    0\\
    0\\
    0\\
    \end{matrix}\ \rule[-4.5ex]{1pt}{10.5ex}\ \begin{matrix}
    1 & 1 & 1\\
    0 & 1 & 1\\
    0 & 0 & 0\\
    0 & 0 & 0
    \end{matrix}\ \rule[-4.5ex]{1pt}{10.5ex}\ \begin{matrix}
    0 & 0\\
    0 & 0\\
    1 & 1\\
    0 & 0
    \end{matrix}\ \rule[-4.5ex]{1pt}{10.5ex}\ \begin{matrix}
    0 & 0 & 0 & 0\\
    0 & 0 & 0 & 0\\
    0 & 0 & 0 & 0\\
    1 & 1 & 1 & 1
    \end{matrix}
    \right)
  \end{equation*}
  Comparing columns, we readily observe that no columns in $R'$ are found in $R$, i.e., all partitions of $Y$ disagree with all partitions of $Y'$. Using multiset notation showing multiplicities, the lexicographic ordering of partitions is
  \begin{equation*}
    Y^{\emptyset}\!:\!1,\ Y^{[c]}\!:\!2,\ Y^{[a]}\!:\!1,\ Y^{[a,b]}\!:\!2,\ Y^{[d]}\!:\!4.
  \end{equation*}

  One corresponding order of elements of $Y$ would be
  \begin{equation*}
    1,5,6,2,3,4,7,8,9,10
  \end{equation*}
  Then a $\kappa$ mapping $\phi\colon Y'\to Y$ would be
  \begin{equation*}
    \phi(11)=1,\ \phi(12)=5,\ \phi(13)=6,\ \phi(14)=2,\ \phi(15)=3
  \end{equation*}
  which is illustrated in figure \ref{eg_kappa_mapping}.
\end{example}

\begin{figure}[ht]
  \includegraphics[height=1.5in]{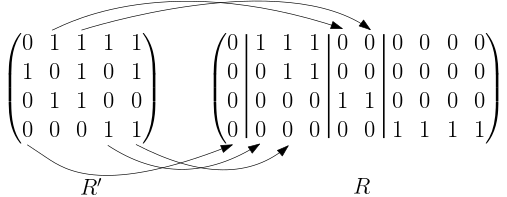}
  \caption{Relations $R'$ and $R$ with $\kappa$ mapping for Example \ref{eg:eg_kappa_mapping}.}
  \label{eg_kappa_mapping}
\end{figure}

Since a $\kappa$-mapping is always possible, a weight-minimizing $g$ can have no (B) larger than the value achieved by it. This value will be our upper bound. To calculate it we now define the following function.

\begin{definition}
  \label{df:kappa_function}
  Given two $\relX$ objects $r_1,r_2$, let $T_n=\{Y_{2,\tau_1}^{\not\pis}, \dots, Y_{2,\tau_N}^{\not\pis}\}$ be the sequence of $x$-grouped partitions of $Y_2^{\not\pis}$ in increasing order of their total cardinality, $S_n=\{s_n\}_{i=1}^{N}$ the sequence of partial sums $s_n=\sum_{i=1}^{n} \#Y_{2,\tau_i}^{\not\pis}$, and $m\le N$ the largest index such that $s_m \le \#Y_{1}^{\not\pis}$. Define the function $\kappa\colon Obj(\relX)\times Obj(\relX) \to \N$ by
  \begin{align*}
    \kappa(r_1, r_2) &:= \begin{cases}
    0 &\text{if}\ N = 1\\
    s_{m-1} &\text{if}\ \#Y_1^{\not\pis} \ge \#Y_2^{\not\pis}\\
    s_{m-1} &\text{if}\ \#Y_1^{\not\pis} < \#Y_2^{\not\pis}\ \text{and}\ \#Y_{2,\tau_m}^{\not\pis} > \#Y_{1}^{\not\pis} - s_{m}\\
    s_m &\text{otherwise}
    \end{cases}
  \end{align*}
\end{definition}

\begin{example}
  \label{eg:eg_kappa_function}
  Continuing the previous examples \ref{eg:eg_xgrouping} and \ref{eg:eg_kappa_mapping},
  \begin{equation*}
    T_n = \{Y^{\emptyset}, Y^{[c]}, Y^{[a], [a,b]}, Y^{[d]}\}
  \end{equation*}
 and
  \begin{equation*}
    S_n = \{1, 3, 6, 10\}.
  \end{equation*}
  Since $\#Y'^{\not\pis}=5$, we have $Y_{\tau_m}=Y^{[c]}$, so $m=2$, $\#Y'^{\not\pis} = 5 < 10 = \#Y^{\not\pis}$, but $\#Y_{\tau_m}^{\not\pis}=3 = 5-2 = \#Y'^{\not\pis}-s_m$. Hence $\kappa(r_1, r_2) = s_{m} = 3$.
\end{example}

We are now able to state and prove the $\kappa$ bound.

\begin{lemma}
  \label{lemma:kappabound}
  Given $\relX$ objects $r_1, r_2$, the count of $y\in Y_1^{\not\pis}$ mapped to disagreeing partitions in $Y_2^{\not\pis}$ by any function $g\colon Y_1\to Y_2$ minimizing the weight $w(g|r_1,r_2)$ cannot exceed the total multiplicity of the disagreeing partitions $Y_1^{\not\pis}$ reduced by the quantity $\kappa(r_1, r_2)$, i.e.,
  \begin{equation*}
    (B) = \max_{x\in X} \left\{ \sum_{\sigma\owns x} \#\left\{y\in Y_1^{\sigma} \colon g(y)\notin Y_2^{\sigma}\right\}\right\} \le \#Y_1^{\not\pis} - \kappa(r_1, r_2).
  \end{equation*}
\end{lemma}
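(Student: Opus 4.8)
The plan is to prove the bound by exhibiting an explicit witness and then showing that no weight-minimizer can do worse on term (B). First I would invoke Proposition \ref{proposition:kappa_mapping} to fix a $\kappa$-mapping $\phi\colon Y_1^{\not\pis}\to Y_2^{\not\pis}$, which assigns the disagreeing elements of $Y_1$ one-for-one into the disjoint $x$-groups $Y_{2,\tau_1}^{\not\pis},\dots,Y_{2,\tau_N}^{\not\pis}$ of $Y_2^{\not\pis}$, filling groups of smaller total cardinality first. The reason for routing through these groups is that, by Proposition \ref{proposition:xgroupingofpartitions}, the supports $\tau_i$ are pairwise disjoint, so any fixed $\hat{x}\in X$ lies in at most one $\tau_i$; hence the disagreements that a given $\hat{x}$ can ``see'' come only from the single group whose support contains $\hat{x}$, together with the source-side contributions of those same elements. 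I would then extend $\phi$ to all of $Y_1$ by sending each agreeing $y\in Y_1^{\sigma}$ with $\sigma\in\pis$ into its matching partition $Y_2^{\sigma}$, which contributes $0$, and evaluate term (B) directly for this $g$.

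The heart of the argument is the computation that this $g$ achieves $\text{(B)}=\#Y_1^{\not\pis}-\kappa(r_1,r_2)$, which requires tracking how the greedy fill distributes charges and matching the four cases of Definition \ref{df:kappa_function}. Writing $c_i=\#Y_{2,\tau_i}^{\not\pis}$ for the capacities in increasing order and $s_n=\sum_{i\le n}c_i$ for the partial sums, the elements assigned into groups $\tau_1,\dots,\tau_m$ exhaust those groups and are absorbed without ever piling more onto a common $\hat{x}$ than the construction forces, so the count $s_m$ (or $s_{m-1}$, according to whether the final group overflows and whether $\#Y_1^{\not\pis}$ exceeds the total capacity $\#Y_2^{\not\pis}$) is exactly the quantity $\kappa$ that is subtracted off, leaving the residual $\#Y_1^{\not\pis}-\kappa$ as the largest load any single $\hat{x}$ must bear. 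Care is needed with the empty-index group $Y_2^{\emptyset}$, whose members charge disagreements back to their source indices, and with the overflow clause of Proposition \ref{proposition:kappa_mapping} when $\#Y_1^{\not\pis}>\#Y_2^{\not\pis}$. I expect verifying that the piecewise definition of $\kappa$ exactly reproduces this maximal per-$x$ load, and that the smallest-group-first assignment is genuinely optimal for the max-over-$x$ load, to be the most delicate bookkeeping and the main obstacle of the whole proof; a clean way to close it is a pigeonhole argument showing that the capacity constraints force at least $\#Y_1^{\not\pis}-\kappa$ disagreeing elements onto some common index no matter how $g$ is chosen.

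To finish, I would upgrade ``there exists a good $g$'' to ``every weight-minimizing $g$ meets the bound.'' Following the strategy on page \pageref{proofstrategy}, I would argue that the $\kappa$-mapping is injective on $Y_1^{\not\pis}$ and hence makes $g(Y_1)$ at least as large as any competitor, so it simultaneously minimizes term (A); consequently the global minimum weight splits as $\min(A)+\min(B)$, and since $(A)\ge\min(A)$ and $(B)\ge\min(B)$ for every $g$, any minimizer of the sum must attain $\min(B)=\#Y_1^{\not\pis}-\kappa(r_1,r_2)$ in its (B) component. Equivalently, a local-swap argument works: if a weight-minimizer had $\text{(B)}>\#Y_1^{\not\pis}-\kappa$, then replacing its action on the disagreeing elements by the $\kappa$-mapping would strictly lower (B) without raising (A), contradicting minimality and establishing the stated inequality.
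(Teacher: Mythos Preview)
Your approach is essentially the paper's: invoke the $\kappa$-mapping of Proposition \ref{proposition:kappa_mapping}, verify through the four cases of Definition \ref{df:kappa_function} that it attains $\text{(B)}=\#Y_1^{\not\pis}-\kappa(r_1,r_2)$, and then infer the bound for weight-minimizers from the existence of this witness. The paper's proof is in fact terser than yours on the final inference---it simply asserts that since the $\kappa$-mapping exists, a weight-minimizing $g$ ``can have no (B) larger''---whereas you try to justify this via the splitting $\min(A)+\min(B)$ or a local swap.

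One caution on that last step: your claim that the extended $\kappa$-mapping ``simultaneously minimizes term (A)'' is not automatic. When agreeing partitions have mismatched cardinalities (say $\#Y_1^{\sigma}>\#Y_2^{\sigma}$ for some $\sigma\in\pis$), a competitor could in principle enlarge $\#g(Y_1)$ by cross-mapping overflow elements between agreeing and disagreeing regions, so ``injective on $Y_1^{\not\pis}$'' does not by itself force $\#g(Y_1)$ to be maximal. The paper does not address this either, and in fact the downstream uses of the lemma (Propositions \ref{proposition:morphismweight}--\ref{proposition:nonmorphismweight} and Lemma \ref{lemma:minimumweight}) only require that \emph{some} function achieves $\text{(B)}\le\#Y_1^{\not\pis}-\kappa$ while being near-surjective, which your construction provides. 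Your pigeonhole suggestion---that capacity constraints force at least $\#Y_1^{\not\pis}-\kappa$ onto a common index for \emph{every} $g$---would prove more than the paper claims or needs, and is not pursued there.
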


\begin{proof}
  \label{proof:kappabound}

  As mentioned in the previous section, since (B) counts only those $y\in Y_1^{\not\pis}$ mapped to the group of disagreeing partitions of $Y_2^{\not\pis}$ collectively receiving the most mappings of $y\in Y_1^{\not\pis}$ and all including the same maximizing $\hat{x}_2$, (B) can be reduced by the count of any mappings to partitions in other $x$-groups of disagreeing partitions. By construction, a $\kappa$ mapping maps one-to-one and surjectively to disagreeing partitions in lexicographic order such that the last $x$-grouping of partitions to which mappings are made will have the potential to receive the largest count of such mappings. There are four cases:
  \begin{enumerate}
    \def\labelenumi{(\arabic{enumi})}
    \item
      If there is only one $x$-grouping of partitions, i.e., when $N=1$,
      the weight-maximizing $\hat{x}_2$ must be in that group, and no
      reductions are possible; hence $\kappa(r_1, r_2) = 0$.
    \item
      If $N>1$ and the count of $y$-elements in the source's disagreeing
      partitions equals or exceeds that of the target's disagreeing
      partitions, i.e., $\#Y_1^{\not\pis} \ge \#Y_2^{\not\pis}$, then the
      weight-maximizing $\hat{x}_2$ must be in the last $x$-grouping,
      $Y_{2,\tau_m}$, and hence $\kappa(r_1,r_2) = s_{m-1}$.
    \item
      If $N>1$ and $\#Y_1^{\not\pis} < \#Y_2^{\not\pis}$, but the count
      to the last surjectively mapped $x$-grouping of partitions exceeds
      the count mapped to the next $x$-group, i.e.,
      $\#Y_{2,\tau_m} > \#Y_{2,\tau_{m+1}}$, then the weight-maximizing
      $\hat{x}_2$ is in $Y_{2,\tau_m}^{\not\pis}$ and again
      $\kappa(r_1, r_2) = s_{m-1}$.
    \item
      Otherwise $N>1,\ \#Y_1^{\not\pis} < \#Y_2^{\not\pis}$, and
      $\#Y_{2,\tau_m} \le \#Y_{2,\tau_{m+1}}$, i.e., the count of mappings
      to the $x$-group $Y_{2,\tau_{m+1}}^{\not\pis}$ equals or exceeds
      the count of mappings to each of the other $x$-groups
      $\{Y_{2,\tau_{1}}^{\not\pis}, \dots, Y_{2,\tau_{m}}^{\not\pis}\}$
      and $\kappa(r_1,r_2)=s_m$.
  \end{enumerate}

  By proposition \ref{proposition:kappa_mapping}, a $\kappa$ mapping $\phi\colon r_1\to r_2$ always exists. Consequently, a weight-minimizing $g$ can have no (B) larger than the value achieved by it. Thus a weight-minimizing $g$ must have
    \begin{equation*}
      \text{(B)} \le \#Y_1^{\not\pis} - \kappa(r_1, r_2). \qedhere
    \end{equation*}
\end{proof}

We can illustrate the four cases in the proof of the $\kappa$ bound by considering variations on examples \ref{eg:eg_xgrouping}, \ref{eg:eg_kappa_mapping}, and \ref{eg:eg_kappa_function}.

\begin{example}
  \label{eg:eg_kappa_thrm_case1}
  Suppose we have $r_1=(Y', R')$ from example \ref{eg:eg_kappa_mapping}, for which
  \begin{equation*}
    R' = \begin{pmatrix} 0 & 1 & 1 & 1 & 1\\ 1 & 0 & 1 & 0 & 1\\ 0 & 1 & 1 & 0 & 0\\ 0 & 0 & 0 & 1 & 1\\ \end{pmatrix}
  \end{equation*}
  and we wish to calculate the $\kappa$ bound on mappings to $r_3=(Y_3,R_3)$ where $Y_3 = \{2,3,4\}$ and
  \begin{equation*}
    R_3 = \left(
      \begin{matrix} 1 & 1 & 1\\
      0 & 1 & 1\\
      0 & 0 & 0\\
      0 & 0 & 0
      \end{matrix} \right)
  \end{equation*}
  The analysis in example \ref{eg:eg_xgrouping} showed that $R_3$ has only one $x$-grouping of partitions, namely $\{Y_3^{[a], [a,b]}\}$. Accordingly, when calculating the $\kappa$ bound we find $N=1$. The $\kappa$ mapping must map all $y\in Y'$ to this group of partitions. No reductions are possible, and $\kappa(r_1,r_3) = 0$.
\end{example}

\begin{example}
  \label{eg:eg_kappa_thrm_case2}
  Now suppose we still have $r_1=(Y', R')$ from example \ref{eg:eg_kappa_mapping} and we wish to calculate the $\kappa$ bound on mappings to $r_4=(Y_4, R_4)$ with $Y_4=\{1,2,3,4\}$ and
  \begin{equation*}
    R_4 = \left( \begin{matrix}
    0\\
    0\\
    0\\
    0\\
    \end{matrix}\ \rule[-4.5ex]{1pt}{10.5ex}\ \begin{matrix}
    1 & 1 & 1\\
    0 & 1 & 1\\
    0 & 0 & 0\\
    0 & 0 & 0
    \end{matrix} \right)
  \end{equation*}
  where the vertical bar reflects the $x$-grouping of partitions as in example \ref{eg:eg_xgrouping}. Now $N=2$ and the count of disagreeing columns in source $r_1$ exceeds the count of disagreeing columns in target $r_4$. A $\kappa$ mapping $\phi\colon Y'\to Y_4$ would be
  \begin{equation*}
    \phi(11)=1,\ \phi(12)=2,\ \phi(13)=3,\ \phi(14)=4,\ \phi(15)=4.
  \end{equation*}
  So in calculating the $\kappa$ function we find $m=2$ and $\kappa(r_1, r_4) = 1$, reflecting the reduction by the mapping $\phi(11)=1$.
\end{example}

\begin{example}
  \label{eg:eg_kappa_thrm_case3}
  For an example of the third case, suppose we have want to calculate the $\kappa$ bound on mappings from $r_5=(Y_5, R_5)$ with just $Y_5=\{11, 12, 13, 14\}$ and
  \begin{equation*}
    R_5 = \begin{pmatrix}
    0 & 1 & 1 & 1\\
    1 & 0 & 1 & 0\\
    0 & 1 & 1 & 0\\
    0 & 0 & 0 & 1
    \end{pmatrix}
  \end{equation*}
  to the full target $r=(Y,R)$ from example \ref{eg:eg_xgrouping} for which
  \begin{equation*}
    R = \left( \begin{matrix}
    0\\
    0\\
    0\\
    0\\
    \end{matrix}\ \rule[-4.5ex]{1pt}{10.5ex}\ \begin{matrix}
    1 & 1 & 1\\
    0 & 1 & 1\\
    0 & 0 & 0\\
    0 & 0 & 0
    \end{matrix}\ \rule[-4.5ex]{1pt}{10.5ex}\ \begin{matrix}
    0 & 0\\
    0 & 0\\
    1 & 1\\
    0 & 0
    \end{matrix}\ \rule[-4.5ex]{1pt}{10.5ex}\ \begin{matrix}
    0 & 0 & 0 & 0\\
    0 & 0 & 0 & 0\\
    0 & 0 & 0 & 0\\
    1 & 1 & 1 & 1
    \end{matrix} \right)
  \end{equation*}
  with vertical bars again reflecting $x$-groupings in that example. Now $N=4$ but the count of disagreeing columns in the source is less than the count of disagreeing columns in the target. A $\kappa$ mapping $\phi\colon Y_5\to Y$ would be
  \begin{equation*}
    \phi(11)=1,\ \phi(12)=5,\ \phi(13)=6,\ \phi(14)=2.
  \end{equation*}

  Calculating the $\kappa$ function we find $m=2$ and the count mapped to the last surjectively-mapped $x$-group $\#Y_{\{[c]\}}=2$, which exceeds the count of 1 remaining mapping to $Y_{\{[a], [a,b]\}}$. So $\kappa(r_5, r) = 1$, reflecting the reduction by the mapping $\phi(11)=1$.
\end{example}

\begin{example}
  \label{eg:eg_kappa_thrm_case4}
  Finally, example \ref{eg:eg_kappa_mapping} illustrates the fourth case. With the $\kappa$ mapping given in that example, we have $N=4$ and, as calculated in example \ref{eg:eg_kappa_function}, $m=2$ but the count of 2 mapped to the last surjectively-mapped $x$-group $Y_{\{[c]\}}$ equals the count of 2 mapped to the next $x$-group $Y_{\{[a], [a,b]\}}$ and hence $\kappa(r_1,r_4) = 3$, reflecting reduction by the mappings
  \begin{equation*}
    \phi(11)=1,\ \phi(12)=5,\ \phi(13)=6.
  \end{equation*}
\end{example}

Thus $\kappa(r_1,r_2)$ is a minimum number of disagreeing mappings by \emph{any} function from $Y_1$ into partitions of $Y_2$ that do not have any relation to the maximizing $\hat{x}$ and so avoid being counted in term (B) of the weight function. In effect, $\kappa(r_1,r_2)$ caps the penalty of failing to have morphisms between $r_1$ and $r_2$. Note that this quantity may only rarely be the \emph{minimum} upper bound. For instance, some function $g$ might map less than surjectively to partitions in additional $x$-groups without affecting the maximizing $\hat{x}$; those additional mappings would be ignored by $\kappa$ and further reduce the upper bound.

\subsection{\texorpdfstring{$\kappa$ Algorithm}{\textbackslash kappa Algorithm}}
\label{section:algorithm}

Before continuing the discussion of weight and distance, we demonstrate the practicality of $\kappa$. When assuming all partitions disagree with those in the source relation,\footnote{The algorithm outlined here assumes a ``precalculation'' or ``removal'' of disagreeing partitions in $R1,R2$. Incorporating this initial step would obscure the core algorithm being highlighted here. An efficient combination of both steps folds them together, further obscuring the core $\kappa$ algorithm.} the $\kappa$ calculation can be described abstractly as follows:

\renewcommand{\algorithmicrequire}{\textbf{IN:}\quad\ }
\renewcommand{\algorithmicensure}{\textbf{OUT:}}
\begin{algorithm}
  \caption{$\kappa$ Algorithm}
  \label{kappa_alg}
  \begin{algorithmic}[1]
    \REQUIRE $R$ $<$binary matrix of a relation$>$\\
      $\quad\ \ MAX$ $<$integer count of source's disagreeing partitions$>$
    \ENSURE $KAPPA$ $<$integer of target's $\kappa$ value$>$
    \STATE $PARTITIONS \leftarrow$ subsets of $R$ columns with same rows
    \STATE $XGROUPINGS \leftarrow$ join elements of $PARTITIONS$ whose columns pairwise share an $X$
    \STATE $BLOCKCOUNTS \leftarrow$ count columns each element of $XGROUPINGS$
    \STATE Sort $BLOCKCOUNTS$ in increasing order
    \STATE $BLOCKSUMS \leftarrow$ cumulative sums of $BLOCKCOUNTS$
    \STATE Sort $BLOCKSUMS$ in increasing order
    \STATE $M \leftarrow$ index of largest element $\le MAX$ in $BLOCKSUMS$
    \IF{$length(BLOCKSUMS) = 1$}
      \STATE $KAPPA = 0$
    \ELSIF{$MAX \ge$ number of columns in $R$}
      \STATE $KAPPA = BLOCKSUMS[M-1]$
    \ELSIF{$BLOCKSUMS[M] > BLOCKSUMS[M+1]$}
      \STATE $KAPPA = BLOCKSUMS[M-1]$
    \ELSE
      \STATE $KAPPA = BLOCKSUMS[M]$
    \ENDIF
    \RETURN $KAPPA$.
  \end{algorithmic}
\end{algorithm}

This algorithm takes as input a binary matrix representing a target relation and an integer representing the maximum number of mappings from the source relation to $y$-partitions of the target relation (i.e., to columns of the input matrix) and then outputs the $\kappa$ value.  Source code can be obtained from \url{https://github.com/kpewing/relations}.  This Python 3 library includes unit tests based on examples \ref{eg:eg_kappa_thrm_case1} through \ref{eg:eg_kappa_thrm_case4} (as well as additional code to complete the calculation of distance between two relations).

For a relation of dimension $m \times n$, the time-limiting step is the first---constructing a list of $x$-groupings---during which the algorithm iterates over all columns of the relation to check each row against the list of $x$'s grouped so far. This step terminates because the rows and columns are finite and has duration at worst proportional to $\mathcal{O}(m\times n)$, and memory proportional to the number of rows: $\mathcal{O}(m)$. The second step---sorting the cumulative sums---can be achieved in time and memory at worst proportional to $\mathcal{O}(m + n)$ using an integer sorting algorithm like \texttt{bucket} or \texttt{counting} (see \cite[]{wikipedia_sorting_nodate} (comparing sorting algorithms)). The next two steps each require a single traversal of the list of $x$-groupings, which requires no additional memory and time at worst proportional to the number of rows: $\mathcal{O}(m)$. The remaining steps are trivial integer calculations and comparisons.

\subsection{Weight Calculation and Bounds}

We now turn to the weight of functions $g\colon r_1\to r_2$ between two $\relX$ objects.

Recall that if $r_1=(Y_1,R_1)$ and $r_2=(Y_2,R_2)$, a morphism $r_1 \to r_2$ in ${\bf Rel}_X$ simply consists of a function $g: Y_1 \to Y_2$ such that $(x,g(y))\in R_2$ whenever $(x,y) \in R_1$.  Therefore, the weight of $g$
\begin{equation*}
  w(g | r_1, r_2) = \max_{x \in X}\left\{ \#\{Y_2 \backslash g(Y_1)\} + \sum_{y\in Y_1} \left\{\begin{aligned}0 & \text{ if }((x,y)\in R_1 \text{ and } (x,g(y)) \in R_2)\\& \text{ or } ((x,y) \notin R_1 \text{ and } (x,g(y)) \notin R_2)\\ 1 &\text{ otherwise}\end{aligned}\right\}\right\}
\end{equation*}
is well-defined.  Because $g$ and $g'$ range over all functions in the definition of the pseudometric $d$, these ranges include all morphisms as well.  Therefore, the following Corollary is immediate.

\begin{corollary}
  \label{cor:weight1}
  If $r_1=(Y_1,R_1)$ and $r_2 = (Y_2,R_2)$ are objects in ${\bf Rel}_X$ then at least one of the following are true:
  \begin{enumerate}
  \item $d((Y_1,R_1),(Y_2,R_2)) \le w(g | r_1, r_2)$ for every morphism $g: (Y_1,R_1) \to (Y_2,R_2)$, or
  \item $d((Y_1,R_1),(Y_2,R_2)) \le w(h | r_2, r_1)$ for every morphism $h: (Y_2,R_2) \to (Y_1,R_1)$.
  \end{enumerate}
\end{corollary}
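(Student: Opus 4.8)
The plan is to read the result directly off the structure of Definition \ref{df:distance}, exploiting that the two minima defining $d$ range over \emph{all} functions, of which the $\relX$ morphisms form merely a subset. Introduce the shorthand $A := \min_g w(g \mid r_1, r_2)$ and $B := \min_h w(h \mid r_2, r_1)$, where $g$ ranges over all functions $Y_1 \to Y_2$ and $h$ over all functions $Y_2 \to Y_1$. Then Definition \ref{df:distance} reads simply as $d(r_1, r_2) = \max\{A, B\}$.

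First I would record the one-line inequality that powers the whole argument: because every morphism $g \colon (Y_1,R_1) \to (Y_2,R_2)$ is in particular one of the functions $Y_1 \to Y_2$ over which the minimum defining $A$ is taken, we have $w(g \mid r_1, r_2) \ge A$ for every such morphism, and symmetrically $w(h \mid r_2, r_1) \ge B$ for every morphism $h \colon (Y_2,R_2) \to (Y_1,R_1)$. The remainder is a case split on which term realizes the maximum. If $d(r_1,r_2) = A$, then $d(r_1,r_2) = A \le w(g \mid r_1, r_2)$ for every morphism $g$ in the first direction, which is conclusion (1); if $d(r_1,r_2) = B$, the mirror-image inequality gives conclusion (2). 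Since $\max\{A,B\}$ equals at least one of $A$ and $B$, at least one conclusion must hold.

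I expect no genuine obstacle here: the corollary is immediate once one observes that restricting the weight to morphisms can only increase it (never decrease it) relative to the all-functions minimum, so that whichever minimum attains the distance is automatically a lower bound for the morphism weights in that direction. The single point deserving a remark—and the reason the statement offers two alternative bounds rather than one uniform bound—is that morphisms may be wholly absent in one or even both directions, as Examples \ref{eg:eg_nomorph_emptyrel} and \ref{eg:eg_nomorph_bidir} show. This is harmless: a claim universally quantified over an empty collection of morphisms is vacuously true, so whichever conclusion corresponds to a direction lacking morphisms holds trivially, and the case analysis above still delivers at least one valid conclusion.
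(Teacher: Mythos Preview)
Your proposal is correct and takes essentially the same approach as the paper, which simply declares the corollary ``immediate'' from the observation that the minima defining $d$ range over all functions and hence include all morphisms. You have merely spelled out that immediate argument explicitly, including the case split on which minimum attains the maximum and the vacuous-truth remark for directions lacking morphisms.
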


The order of the quantifiers is significant, since the order is a result of the particular structure of the minimum and maximum operators in the definition of the pseudometric $d$.

\begin{example}
  \label{eg:eg_morphism_weight}
  Recall the relations $r_1$ and $r_2$ given in Example \ref{eg:eg_total_weight}, with matrices given by
  \begin{equation*}
    R_1 = \begin{pmatrix}
      1&1&1&1&0\\
      1&1&0&0&0\\
      0&0&1&1&1\\
      0&0&1&1&1\\
    \end{pmatrix}, \;
    R_2 =\begin{pmatrix}
    1&1&1&1&1\\
    1&1&0&0&0\\
    0&0&1&1&1\\
    0&0&0&1&1\\
    \end{pmatrix}.
  \end{equation*}
  In Example \ref{eg:eg_total_weight}, we determined the distance between these two relations was $1$.  If we label the rows of each relation as $Y=\{1,2,3,4,5\}$, then it is easy to verify that the function $f: (Y,R_1) \to (Y,R_2)$ given by
  \begin{equation*}
    f(1) = 1,\;     f(2) = 1,\;     f(3) = 4,\;     f(4) = 4,\;     f(5) = 4,
  \end{equation*}
  is a morphism.  Corollary \ref{cor:weight1} asserts that the weight of this morphism is at least the distance between $R_1$ and $R_2$.  This is easily verified by direct calculation
  \begin{eqnarray*}
    w(f|r_1, r_2) &=& \max_{x\in X} \left\{ \#Y_2\!\setminus\! f(Y) +\! \sum_{y\in Y}\! \left\{\begin{array}{l}0\ \text{if}\ ( (x,y)\in R_1\ \text{and}\ (x, f(y))\in R_2))\\\phantom{0}\ \text{or}\ ((x,y)\notin R_1\ \text{and}\ (x,f(y)\notin R_2))\\1\ \text{otherwise}\end{array} \!\!\right\} \!\right\}\\
    &=& 3 + 1 = 4 \ge 1 = d(r_1,r_2).
  \end{eqnarray*}
\end{example}

The weight of a composition of two functions is subadditive in an interesting way.

\begin{corollary}
  \label{cor:weight2}
  Suppose that $r_1=(Y_1,R_1)$, $r_2=(Y_2,R_2)$, and $r_3=(Y_3,R_3)$ are objects in ${\bf Rel}_X$.
  Recounting the proof of the triangle inequality in Proposition \ref{prop:disapseudometric} establishes that
  \begin{equation*}
    w(g \circ f | r_1, r_3) \le w(f| r_1, r_2) + w(g | r_2, r_3)
  \end{equation*}
  whenever $f:(Y_1,R_1) \to (Y_2,R_2)$ and $g:(Y_2,R_2) \to (Y_3,R_3)$ are morphisms in ${\bf Rel}_X$.
\end{corollary}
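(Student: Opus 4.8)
The plan is to recognize this corollary as precisely the \emph{per-morphism} weight inequality that already sits inside the triangle-inequality argument of Proposition \ref{prop:disapseudometric}, extracted \emph{before} the minimization and maximization over functions are applied. So I would specialize that argument verbatim: take the three fields to be $Y_1,Y_2,Y_3$, the three matrices to be $R_1,R_2,R_3$, and the two intermediate functions $g_1,g_2$ to be the given morphisms $f$ and $g$ (noting that $g\circ f$ is itself a morphism $r_1\to r_3$, since $\relX$ is a category, so all three weights are well-defined). I would then fix an arbitrary $x\in X$, split the bracketed quantity defining $w(g\circ f\mid r_1,r_3)$ into its image-complement count (term (A)) and its row-disagreement sum (term (B)), bound each separately, and reassemble with the maximum over $x$ at the end.

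For term (A) I would invoke the cardinality inequality established as step (3) of Proposition \ref{prop:disapseudometric}, which reads (with $(Y,Y',Y'')=(Y_1,Y_2,Y_3)$ and $(g_1,g_2)=(f,g)$)
\[
  \#(Y_3\setminus (g\circ f)(Y_1)) \le \#(Y_3\setminus g(Y_2)) + \#(Y_2\setminus f(Y_1)).
\]
The two right-hand counts are exactly the image-complement terms appearing in $w(g\mid r_2,r_3)$ and $w(f\mid r_1,r_2)$, and since none of them depends on $x$, this term presents no difficulty. For term (B), fixing $x$ and adding and subtracting the intermediate row value $r'_{x,f(y)}$ inside each summand, the $1$-norm triangle inequality (step (1) of Proposition \ref{prop:disapseudometric}) gives
\[
  \sum_{y\in Y_1}\bigl|r''_{x,(g\circ f)(y)} - r_{x,y}\bigr| \le \sum_{y\in Y_1}\bigl|r''_{x,g(f(y))} - r'_{x,f(y)}\bigr| + \sum_{y\in Y_1}\bigl|r'_{x,f(y)} - r_{x,y}\bigr|,
\]
and the second right-hand sum is already the (B)-contribution of $w(f\mid r_1,r_2)$ at row $x$.

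The remaining work---and the step I expect to be the \textbf{main obstacle}---is to reindex the first right-hand sum from the source field $Y_1$ to the intermediate field $Y_2$, i.e.\ to pass from $\sum_{y\in Y_1}\bigl|r''_{x,g(f(y))}-r'_{x,f(y)}\bigr|$ to the (B)-contribution $\sum_{y'\in Y_2}\bigl|r''_{x,g(y')}-r'_{x,y'}\bigr|$ of $w(g\mid r_2,r_3)$. Rewriting the left sum as $\sum_{y'\in Y_2}\#f^{-1}(y')\,\bigl|r''_{x,g(y')}-r'_{x,y'}\bigr|$ makes the crux visible: the reindexing is immediate when $f$ is injective, but a many-to-one $f$ counts each disagreement recorded by $g$ once per preimage, so the inequality can only be read off in this direction under control of the multiplicities $\#f^{-1}(y')$. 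I would therefore carry out this step under the hypothesis that the relevant $f$ may be taken injective (or otherwise identify the structural condition that bounds $\#f^{-1}$), since without it the bound on (B) is not apparent. Granting the reindexing, I would then take the maximum over $x\in X$ of the sum of the (A)- and (B)-bounds, using that the maximum of a sum is at most the sum of the maxima, to recover $w(g\circ f\mid r_1,r_3)\le w(f\mid r_1,r_2)+w(g\mid r_2,r_3)$. I would flag the non-injective case as the point deserving the most scrutiny in the final write-up.
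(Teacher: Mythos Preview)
Your reconstruction tracks the paper exactly: the corollary has no separate proof in the paper beyond the sentence ``Recounting the proof of the triangle inequality in Proposition \ref{prop:disapseudometric} establishes that\ldots'', so the intended argument is precisely the one you extract---split into the image-complement term handled by step~(3) and the row-sum term handled by step~(1), then take the max over $x$.

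The step you flag is not a cosmetic worry but a genuine gap, and it cannot be closed: the corollary is false as stated. Your rewriting
\[
  \sum_{y\in Y_1}\bigl|r''_{x,g(f(y))}-r'_{x,f(y)}\bigr|
  \;=\;\sum_{y'\in Y_2}\#f^{-1}(y')\,\bigl|r''_{x,g(y')}-r'_{x,y'}\bigr|
\]
makes the failure mode explicit: when $f$ is many-to-one, each disagreement recorded by $g$ is amplified by the fibre size, so there is no reason for the left-hand side to be dominated by $\sum_{y'\in Y_2}|r''_{x,g(y')}-r'_{x,y'}|$. A concrete counterexample: take $X=\{x\}$, $Y_1=\{a,b\}$, $Y_2=\{c\}$, $Y_3=\{d\}$, with $R_1=R_2=\emptyset$ and $R_3=\{(x,d)\}$. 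The constant map $f\colon a,b\mapsto c$ and the unique map $g\colon c\mapsto d$ are both $\relX$ morphisms (vacuously), and one computes $w(f\mid r_1,r_2)=0$, $w(g\mid r_2,r_3)=1$, but $w(g\circ f\mid r_1,r_3)=2$. Thus $w(g\circ f)>w(f)+w(g)$.

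So your instinct to restrict to injective $f$ is the right diagnosis, not merely a convenience; without an injectivity (or fibre-size) hypothesis the inequality simply does not hold. The paper's line~(1) in the proof of Proposition~\ref{prop:disapseudometric} glosses over the same reindexing, and the corollary inherits the problem verbatim.
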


\begin{example}
  Let us continue Example \ref{eg:eg_morphism_weight}, and recall its definition of the morphism $f: (Y,R_1) \to (Y,R_2)$.  Consider the relation $r_3$ given by the matrix
  \begin{equation*}
    R_3 = \begin{pmatrix}
      1&1\\
      1&0\\
      1&1\\
      0&1\\
      \end{pmatrix},
  \end{equation*}
  where we will choose to label the columns as $Y_3 = \{1,2\}$.  If we define another function $g:(Y,R_2) \to (Y_3,R_3)$ by
  \begin{equation*}
    g(1) = 1,\;     g(2) = 1,\;     g(3) = 2,\;     g(4) = 2,\;     g(5) = 2,
  \end{equation*}
  it is easy to see that $g$ is a morphism.

  Since composition of morphisms in $\relX$ yields morphisms, it is the case that
  \begin{equation*}
    (g \circ f)(1) = 1,\;     (g \circ f)(2) = 1,\;    (g \circ f)(3) = 2,\;    (g \circ f)(4) = 2,\;    (g \circ f)(5) = 2\;
  \end{equation*}
  is a morphism $(g \circ f) : (Y,R_1) \to (Y_3,R_3)$.

  With these facts in hand we can directly verify Corollary \ref{cor:weight2}.
  The weight of the composition is
  \begin{eqnarray*}
    w(g\circ f| r_1,r_3) &=& \max_{x\in X} \left\{ \#Y_3\!\setminus\! (g\circ f)(Y) +\! \sum_{y\in Y}\! \left\{\begin{array}{l}0\ \text{if}\ ( (x,y)\in R_1\ \text{and}\ (x, (g\circ f)(y))\in R_3))\\\phantom{0}\ \text{or}\ ((x,y)\notin R_1\ \text{and}\ (x,(g\circ f)(y)\notin R_3))\\1\ \text{otherwise}\end{array} \!\!\right\} \!\right\}\\
    &=& 0 + 2 = 2.
  \end{eqnarray*}
  We already computed
  \begin{equation*}
    w(f| r_1,r_2) = 4
  \end{equation*}
  in Example \ref{eg:eg_morphism_weight}.  Finally,
  \begin{eqnarray*}
    w(g| r_2,r_3) &=& \max_{x\in X} \left\{ \#Y_3\!\setminus\! g(Y) +\! \sum_{y\in Y}\! \left\{\begin{array}{l}0\ \text{if}\ ( (x,y)\in R_2\ \text{and}\ (x, g(y))\in R_3))\\\phantom{0}\ \text{or}\ ((x,y)\notin R_2\ \text{and}\ (x,g(y)\notin R_3))\\1\ \text{otherwise}\end{array} \!\!\right\} \!\right\}\\
    &=& 0 + 2 = 2.
  \end{eqnarray*}
  This is indeed in agreement with Corollary \ref{cor:weight2} since $2 \le 4 + 2 = 6$.
\end{example}

As the following two propositions demonstrate, the exact weights and bounds thereon depend on whether $g$ is a $\relX$ morphism.

\begin{proposition}
  \label{proposition:morphismweight}
  The minimum weight of any morphism $g:r_1\to r_2$ between two $\relX$ objects is
  \begin{equation*}
    \min_{g} w(g|r_1, r_2) = \max \{\#Y_1, \#Y_2\} - \#Y_1.
  \end{equation*}
\end{proposition}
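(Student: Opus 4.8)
The plan is to work entirely in the partition form of the weight, namely $w(g\mid r_1,r_2) = \text{(A)} + \text{(B)}$ as in line~\eqref{line4}, where $\text{(A)} = \#\{Y_2\setminus g(Y_1)\}$ and $\text{(B)} = \max_{x\in X}\sum_{\sigma\owns x}\#\{y\in Y_1^{\sigma}\colon g(y)\notin Y_2^{\sigma}\}$. I would prove the two inequalities separately. For the lower bound, I note that for any function $g\colon Y_1\to Y_2$ the image satisfies $\#g(Y_1)\le\min\{\#Y_1,\#Y_2\}$, so that $\text{(A)} = \#Y_2 - \#g(Y_1)\ge \#Y_2 - \min\{\#Y_1,\#Y_2\} = \max\{\#Y_1,\#Y_2\} - \#Y_1$. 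Since $\text{(B)}\ge 0$ always, every function, and in particular every morphism, has $w(g\mid r_1,r_2)\ge \max\{\#Y_1,\#Y_2\} - \#Y_1$. This direction requires nothing special about morphisms.

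For the reverse inequality I would exhibit a morphism attaining the bound, which means producing a $g$ that makes $\text{(A)}$ as small as possible while simultaneously forcing $\text{(B)}=0$. The idea is to choose $g$ so that it never creates a genuinely new relation, i.e.\ so that $\pi(g(y)) = \pi(y)$ for every $y\in Y_1$: mapping each source partition $Y_1^{\sigma}$ into the target partition $Y_2^{\sigma}$ carrying the \emph{same} index makes the summand in $\text{(B)}$ vanish for every $x$, and such a partition-preserving map is automatically a $\relX$ morphism, since $(x,y)\in R_1$ then implies $x\in\sigma=\pi(g(y))$, hence $(x,g(y))\in R_2$. Within this constraint I would then make $g$ as close to a bijection as the cardinalities permit: injective when $\#Y_1\le\#Y_2$, so that $\text{(A)}=\#Y_2-\#Y_1$, and surjective when $\#Y_1\ge\#Y_2$, so that $\text{(A)}=0$, matching $\max\{\#Y_1,\#Y_2\}-\#Y_1$ in both regimes.

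The main obstacle is precisely the existence of such a partition-preserving morphism. Forcing $\text{(B)}=0$ requires that for every index $\sigma$ with $Y_1^{\sigma}\neq\emptyset$ the equally-indexed target partition $Y_2^{\sigma}$ is also nonempty (and, in the surjective regime, large enough); yet the mere existence of a morphism guarantees only that each source index $\sigma$ is \emph{contained in} some target index, not that a target partition with the identical index is available. Reconciling $\text{(A)}$-minimality with $\text{(B)}=0$ when some source partitions are forced onto strictly larger target partitions is the delicate step, and I expect it to require a careful choice of the worst-case $\hat{x}$ together with the $x$-grouping structure of Proposition~\ref{proposition:xgroupingofpartitions}. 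If equal-index target partitions always existed the construction would be routine, so the real content of the argument lies in controlling $\text{(B)}$ exactly in the general case.
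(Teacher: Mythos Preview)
Your overall strategy matches the paper's: split $w(g\mid r_1,r_2)$ into (A) and (B), argue that (B) vanishes for morphisms, and then minimize (A) over near-surjective choices of $g$. The paper's proof asserts outright that every morphism satisfies $\pi(y)=\pi(g(y))$ for all $y$, so that (B)$=0$ automatically; the remainder of its argument is exactly your lower-bound computation together with the observation that (A) is minimized by making $g$ as surjective as possible.

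The obstacle you flag in your last paragraph is real, and it is precisely the step the paper does not justify. The morphism condition ``$(x,y)\in R_1\Rightarrow(x,g(y))\in R_2$'' yields only the inclusion $\pi(y)\subseteq\pi(g(y))$, not equality, so a morphism can send $y\in Y_1^{\sigma}$ to $g(y)\in Y_2^{\tau}$ with $\sigma\subsetneq\tau$; for any $x\in\tau\setminus\sigma$ this contributes a $1$ to the weight sum, and (B) need not vanish. Concretely, take $X=\{a,b\}$, $Y_1=\{1\}$, $Y_2=\{2\}$ with
\[
R_1=\begin{pmatrix}1\\0\end{pmatrix},\qquad R_2=\begin{pmatrix}1\\1\end{pmatrix}.
\]
The unique function $g(1)=2$ is a morphism, yet $w(g\mid r_1,r_2)=1$ (coming from $x=b$), while $\max\{\#Y_1,\#Y_2\}-\#Y_1=0$. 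Thus the equality in the proposition fails in general, and no amount of ``careful choice of $\hat{x}$'' or use of the $x$-grouping structure will close the gap you identified: the claimed value is a genuine lower bound (your first paragraph is correct and complete for that direction), but it need not be attained by any morphism. Your instinct that something is missing is right; the paper's proof simply asserts the problematic equality without proof.
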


\begin{proof}
  \label{proof:morphismweight}
  If $g$ is a morphism, then it satisfies the ``whenever'' requirement that $(x,g(y))\in R_2$ whenever $(x,y)\in Y_1$. This means that, given any $y$,
  \begin{equation*}
    \pi(y)=\{x\colon (x,y)\in R_1\}=\{x\colon (x,g(y))\in R_2\}=\pi(g(y)),
  \end{equation*}
  i.e., $g$ honors partitions: $y\in Y_1^{\sigma}$ implies $g(y)\in Y_2^{\sigma}$. Consequently, (B) = 0 for all morphisms. Hence
  \begin{equation*}
    w(g|r_1,r_2) = \#Y_2 - \#g(Y_1)
  \end{equation*}
  This quantity is minimized the more nearly surjective $g$ is, yielding the minimum weight for a morphism
  \begin{equation*}
    \min_{g} w(g|r_1, r_2) = \max \{0, \#Y_2 - \#Y_1\} = \max \{\#Y_1, \#Y_2\} - \#Y_1. \qedhere
  \end{equation*}
\end{proof}

\begin{proposition}
  \label{proposition:nonmorphismweight}
  The minimum weight of any non-morphism $g\colon r_1\to r_2$ between two $\relX$ objects is
  \begin{align*}
    \min_{g} w(g|r_1,r_2) &= \max \{\#Y_1, \#Y_2\} - \#Y_1 + \sum_{\sigma\owns \hat{x}_2} \#\{y\in Y_1^{\sigma}\colon g(y)\notin Y_2^{\sigma}\}\\ &\le \max \{\#Y_1, \#Y_2\} - (\#Y_1^{\pis} + \kappa(r_1,r_2)).
  \end{align*}
\end{proposition}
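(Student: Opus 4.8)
The plan is to reuse the partition decomposition $w(g\mid r_1,r_2) = (\mathrm A) + (\mathrm B)$ recorded in line \eqref{line4}, and to read off the two displayed lines from two facts already in hand: Proposition \ref{proposition:morphismweight} (which pins down the minimal value of the $(\mathrm A)$ term) and Lemma \ref{lemma:kappabound} (which bounds the $(\mathrm B)$ term at a weight-minimizer). On this reading the first line is an identification of the minimizing form and the second line is a substitution.

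First I would dispose of the $(\mathrm A)$ term. For every function $g\colon Y_1\to Y_2$ we have $(\mathrm A) = \#\{Y_2\setminus g(Y_1)\} = \#Y_2 - \#g(Y_1)$, and since $\#g(Y_1)\le \min\{\#Y_1,\#Y_2\}$ this gives the pointwise lower bound $(\mathrm A)\ge \max\{0,\#Y_2-\#Y_1\} = \max\{\#Y_1,\#Y_2\} - \#Y_1$, attained exactly when $g$ is as close to surjective as the cardinalities permit (injective if $\#Y_1\le\#Y_2$, surjective if $\#Y_1\ge\#Y_2$). This is precisely the minimal value of $(\mathrm A)$ found in Proposition \ref{proposition:morphismweight}; the only difference for a non-morphism is that $(\mathrm B)$ need no longer vanish.

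The crux is the first displayed line: I must show that a weight-\emph{minimizing} non-morphism can be taken to attain this minimal $(\mathrm A)$ simultaneously with its value of $(\mathrm B)$, so that $\min_g w = \big(\max\{\#Y_1,\#Y_2\} - \#Y_1\big) + (\mathrm B)$. I would argue this with the trade-off analysis laid out in the proof-strategy paragraph (page \pageref{proofstrategy}): term $(\mathrm A)$ is lowered only by making $g$ more nearly surjective, whereas term $(\mathrm B)$ is lowered by sending $y\in Y_1$ either into agreeing partitions or into disagreeing partitions of $Y_2$ whose index omits the maximizing $\hat x_2$; because such reassignments can always be carried out one-for-one, they reduce $(\mathrm B)$ without disturbing the near-surjectivity that keeps $(\mathrm A)$ minimal. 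Conversely, any many-to-one collapse that might further depress $(\mathrm B)$ destroys a unit of surjectivity and so raises $(\mathrm A)$ by at least as much, hence cannot lower the total weight. Thus the global minimum is realized at a maximally surjective $g$, which establishes the equality, with $(\mathrm B) = \sum_{\sigma\owns \hat x_2}\#\{y\in Y_1^{\sigma}\colon g(y)\notin Y_2^{\sigma}\}$ at that minimizer.

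The inequality then follows by a clean substitution. Since the minimizer is weight-minimizing, Lemma \ref{lemma:kappabound} applies and gives $(\mathrm B)\le \#Y_1^{\not\pis} - \kappa(r_1,r_2)$. Inserting this into the equality and using the partition split $\#Y_1 = \#Y_1^{\pis} + \#Y_1^{\not\pis}$, so that $-\#Y_1 + \#Y_1^{\not\pis} = -\#Y_1^{\pis}$, yields
\[
\min_g w(g\mid r_1,r_2) \le \max\{\#Y_1,\#Y_2\} - \#Y_1^{\pis} - \kappa(r_1,r_2) = \max\{\#Y_1,\#Y_2\} - \big(\#Y_1^{\pis} + \kappa(r_1,r_2)\big),
\]
as required. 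I expect the main obstacle to be the compatibility claim of the third paragraph — verifying rigorously that minimizing $(\mathrm A)$ and minimizing $(\mathrm B)$ never conflict, i.e., that trading away surjectivity for a smaller $(\mathrm B)$ can never produce a net decrease in weight. Everything else is cardinality bookkeeping together with appeals to Proposition \ref{proposition:morphismweight} and Lemma \ref{lemma:kappabound}.
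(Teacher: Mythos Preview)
Your proposal is correct and follows essentially the same approach as the paper: both arguments rest on the $(\mathrm A)+(\mathrm B)$ decomposition of line \eqref{line4}, the one-for-one trade-off between surjectivity and disagreement counts, and Lemma \ref{lemma:kappabound} for the final inequality. The only organizational difference is that the paper builds up through cases (a single disagreeing partition, then multiple disagreeing partitions, then the general case with agreeing partitions present), verifying at each stage that remappings cannot lower the total, whereas you argue the compatibility of minimizing $(\mathrm A)$ and $(\mathrm B)$ directly in one pass; the substance is the same.
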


\begin{proof}
  \label{proof:nonmorphismweight} If $g$ is not a morphism, then it fails to honor partitions for some $y$, i.e., $\pi(y)\neq\pi(g(y))$, which implies that (B) \textgreater{} 0 and like (A) depends on $g$. Consider the simplest case that there are no agreeing partitions and only one disagreeing partition $Y_2^{\sigma\notin\pis}=Y_2$. Then any constant mapping $g \equiv y_2$ has weight
  \begin{equation*}
    w(g|r_1,r_2) = \#Y_2 - 1 + \#Y_1,
  \end{equation*}
  and a non-constant minimizer maps as close to surjectively as possible over the sole partition, yielding as minimum weight:
  \begin{equation*}
    \min_{g} w(g|r_1,r_2) = \#Y_2 - \min\{\#Y_1, \#Y_2\} + \#Y_1 = \max\{\#Y_1, \#Y_2\}. \tag{1}
  \end{equation*}

  If $Y_2$ consists of only multiple disagreeing partitions $Y_2^{\not\pis}$, then (B) depends on the allocation of mappings to them and is maximized by a one-to-one mapping to some collection $Y_2^{\not\pis\owns \hat{x}}$ of disagreeing partitions whose indices all contain the maximizing $\hat{x}$. As discussed in proving lemma \ref{lemma:kappabound}, (B) can be reduced; this, however, must increase (A) one-for-one to the extent that $\#Y_1 > \#Y_2^{\not\pis\not\owns \hat{x}}$.

  Hence, by lemma \ref{lemma:kappabound}, when $Y_2$ consists solely of multiple disagreeing partitions for such a mapping $g$, a minimum weight is
  \begin{align*}
    \min_{g} w(g|r_1,r_2) &= \#Y_2 - \min \{\#Y_1, \#Y_2\} + \sum_{\sigma\owns \hat{x}_2} \#\{y\in Y_1^{\sigma}\colon g(y)\notin Y_2^{\sigma}\} \tag{2}\\
    &\le \#Y_2 - \min \{\#Y_1, \#Y_2\} +  \#Y_1^{\not\pis} - \kappa(r_1,r_2)\\
    &= \#Y_2 - \min \{\#Y_1, \#Y_2\} +  \#Y_1 - (\#Y_1^{\pis} + \kappa(r_1,r_2))\\
    &= \max \{\#Y_1, \#Y_2\} - (\#Y_1^{\pis} + \kappa(r_1,r_2)).
  \end{align*}
  The exact form of line (2) is a global minimum when there are no agreeing partitions, because any other mapping in these circumstances must increase (A) and can at best reduce (B) one-for-one. The inequality form of (2) provides an upper bound on the weight for any global minimizer $g$ with no agreeing partitions. Note that when there is only one, disagreeing partition, both the exact and inexact forms of (2) recover (1).

  Finally, consider the most comprehensive case, with some agreeing partitions along with the disagreeing partitions. If we start with the minimizing mapping for (2) and merely add a mapping as nearly surjective as possible from agreeing to agreeing partition, then the weight will still be calculated by (2).

  It turns out, perhaps surprisingly, that this is a global minimum for non-morphisms. For remapping from agreeing to disagreeing partitions must increase (A) while at best leaving (B) unchanged, and any remapping from a disagreeing to an agreeing partition must also increase (A) while at best reducing (B) one-for-one. This completes the proof.
\end{proof}

Since the summation (B) in the exact form for non-morphisms equals 0 for a morphism, we may combine the two propositions into a general form.

\begin{lemma}
  \label{lemma:minimumweight}
  The minimum weight of any mapping $g\colon r_1\to r_2$ between two $\relX$ objects is
  \begin{align*}
    \min_{g} w(g|r_1,r_2) &= \max \{\#Y_1, \#Y_2\} - \#Y_1 + \sum_{\sigma\owns \hat{x}_2} \#\{y\in Y_1^{\sigma}\colon g(y)\notin Y_2^{\sigma}\}\\ &\le \max \{\#Y_1, \#Y_2\} - (\#Y_1^{\pis} + \kappa(r_1,r_2)).
  \end{align*}
\end{lemma}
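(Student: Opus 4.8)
The plan is to read this lemma as the amalgamation of Propositions \ref{proposition:morphismweight} and \ref{proposition:nonmorphismweight}, organized around the dichotomy that every function $g\colon Y_1\to Y_2$ either is a $\relX$ morphism or is not. The first thing I would record is that the term (A) of any weight, $\#\{Y_2\setminus g(Y_1)\}=\#Y_2-\#g(Y_1)$, satisfies $\#Y_2-\#g(Y_1)\ge \#Y_2-\#Y_1$ and is nonnegative, so that for every $g$ we have (A) $\ge \max\{0,\#Y_2-\#Y_1\}=\max\{\#Y_1,\#Y_2\}-\#Y_1$, with equality exactly when $g$ is as nearly surjective as its cardinalities allow. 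This common floor on (A) is what both propositions attain; the two classes of functions differ only in the term (B), the summation $\sum_{\sigma\owns\hat x_2}\#\{y\in Y_1^{\sigma}\colon g(y)\notin Y_2^{\sigma}\}$.

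Next I would split the global minimum over all functions as $\min_g w = \min\{\min_{g\text{ morphism}} w,\ \min_{g\text{ non-morphism}} w\}$. Proposition \ref{proposition:morphismweight} supplies the first inner minimum: every morphism has (B) $=0$, and the most nearly surjective morphism realizes the value $\max\{\#Y_1,\#Y_2\}-\#Y_1$. Proposition \ref{proposition:nonmorphismweight} supplies the second: every non-morphism has (B) $>0$, and its minimum equals $\max\{\#Y_1,\#Y_2\}-\#Y_1+\text{(B)}$ for the minimizing non-morphism. Since both inner minima sit on the same (A)-floor and are separated only by a nonnegative (B), the minimum is attained by a morphism whenever one exists, and for such a minimizer the summation vanishes; when no morphism exists, the global minimizer is a non-morphism and the summation retains the value from Proposition \ref{proposition:nonmorphismweight}. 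Either way the single expression on the first displayed line is correct once the summation is read at the chosen global minimizer. This is exactly the observation flagged immediately before the statement, that the non-morphism exact form collapses to the morphism form precisely when (B) $=0$.

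It then remains to check that the bound on the second line covers both branches of the dichotomy. For a non-morphism minimizer the inequality is literally the one established in Proposition \ref{proposition:nonmorphismweight}, so nothing new is needed. For a morphism minimizer the exact value is $\max\{\#Y_1,\#Y_2\}-\#Y_1$, and I would verify the bound by showing $\#Y_1^{\pis}+\kappa(r_1,r_2)\le\#Y_1$. Writing $\#Y_1=\#Y_1^{\pis}+\#Y_1^{\not\pis}$, this reduces to $\kappa(r_1,r_2)\le\#Y_1^{\not\pis}$, which holds because, by Definition \ref{df:kappa_function}, $\kappa$ is either $0$ or a partial sum $s_{m-1}$ or $s_m$ with $s_m\le\#Y_1^{\not\pis}$ by the defining choice of $m$; alternatively it follows directly from Lemma \ref{lemma:kappabound}, since (B) $\le \#Y_1^{\not\pis}-\kappa(r_1,r_2)$ together with (B) $\ge 0$ forces $\kappa(r_1,r_2)\le \#Y_1^{\not\pis}$.

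I expect the only genuinely delicate point to be the bookkeeping of the quantifier ``for the minimizing $g$'' on the first line: the summation there is not a free parameter but is evaluated at whichever global minimizer the dichotomy selects, so the write-up must state cleanly that the morphism branch forces this term to zero while the non-morphism branch inherits its value from Proposition \ref{proposition:nonmorphismweight}. Once that interpretation is pinned down, the remaining ingredients---the uniform (A)-floor and the elementary inequality $\kappa(r_1,r_2)\le\#Y_1^{\not\pis}$---are routine, and the lemma follows by taking the smaller of the two inner minima.
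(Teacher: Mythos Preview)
Your proposal is correct and follows essentially the same approach as the paper: the paper's entire argument for this lemma is the one-line remark ``Since the summation (B) in the exact form for non-morphisms equals $0$ for a morphism, we may combine the two propositions into a general form,'' after which the lemma is stated without further proof. Your write-up is simply a more careful unpacking of that remark---in particular, your explicit verification that the upper bound also holds in the morphism branch (via $\kappa(r_1,r_2)\le\#Y_1^{\not\pis}$) fills in a detail the paper leaves implicit.
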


The weight of mappings $g'\colon r_2\to r_1$ is derived symmetrically.

\subsection{Distance Calculation and Bounds}

Lemma \ref{lemma:minimumweight} readily allows us to derive a general form for the distance.

\begin{lemma}
  \label{lemma:distance}
  The distance between two $\relX$ objects $r_1, r_2$ is
  \begin{align*}
    d(r_1, r_2) &= \max \{\#Y_1, \#Y_2 \} - \min \left \{\begin{array}{l}
      \#Y_1 - \sum_{\sigma\owns \hat{x}_2} \#\{y\in Y_1^{\sigma}\colon g(y)\notin Y_2^{\sigma} \} \\
      \#Y_2 - \sum_{\sigma\owns \hat{x}_1} \#\{y\in Y_2^{\sigma}\colon g'(y)\notin Y_1^{\sigma} \}
    \end{array} \right\}\\
    &\le \max \{\#Y_1,\#Y_2\} - \min \left\{ \#Y_1^{\pis} + \kappa(r_1,r_2), \#Y_2^{\pis} + \kappa(r_2,r_1) \right\}
  \end{align*}
\end{lemma}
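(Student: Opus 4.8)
The plan is to derive both the exact formula and the upper bound directly from Lemma \ref{lemma:minimumweight}, invoking only the definition of distance (Definition \ref{df:distance}) together with elementary algebra of the $\max$ and $\min$ operators. By definition $d(r_1,r_2) = \max\{\min_g w(g|r_1,r_2),\ \min_{g'} w(g'|r_2,r_1)\}$, so the whole task reduces to substituting the closed form from Lemma \ref{lemma:minimumweight} for each of the two inner minima and simplifying.

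First I would apply Lemma \ref{lemma:minimumweight} to the minimum weight in the forward direction, regrouping its exact expression as $\min_g w(g|r_1,r_2) = \max\{\#Y_1,\#Y_2\} - (\#Y_1 - \sum_{\sigma\owns\hat{x}_2}\#\{y\in Y_1^\sigma : g(y)\notin Y_2^\sigma\})$, so that the common term $\max\{\#Y_1,\#Y_2\}$ stands alone. I would then apply the lemma symmetrically in the reverse direction to obtain $\min_{g'} w(g'|r_2,r_1) = \max\{\#Y_1,\#Y_2\} - (\#Y_2 - \sum_{\sigma\owns\hat{x}_1}\#\{y\in Y_2^\sigma : g'(y)\notin Y_1^\sigma\})$. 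The observation that makes this factoring legitimate is that $\max\{\#Y_1,\#Y_2\}$ is symmetric in its arguments, so the leading constant is identical in both directions and can be pulled outside the outer maximum.

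The central step is then the elementary identity $\max\{C - A_1,\ C - A_2\} = C - \min\{A_1,A_2\}$, valid for any real $C, A_1, A_2$. Applying it with $C = \max\{\#Y_1,\#Y_2\}$ and with $A_1, A_2$ equal to the two parenthesized quantities produces the exact formula in the first displayed line. For the inequality I would instead invoke the upper-bound half of Lemma \ref{lemma:minimumweight}, namely $\min_g w(g|r_1,r_2) \le \max\{\#Y_1,\#Y_2\} - (\#Y_1^{\pis} + \kappa(r_1,r_2))$ and its symmetric counterpart. Because $\max\{m_1,m_2\} \le \max\{B_1,B_2\}$ whenever $m_1\le B_1$ and $m_2\le B_2$, and because each bound $B_i$ shares the same leading term $\max\{\#Y_1,\#Y_2\}$, the same max--min identity rewrites $\max\{B_1,B_2\}$ as $\max\{\#Y_1,\#Y_2\} - \min\{\#Y_1^{\pis}+\kappa(r_1,r_2),\ \#Y_2^{\pis}+\kappa(r_2,r_1)\}$, exactly the claimed bound.

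I expect no serious obstacle here: once Lemma \ref{lemma:minimumweight} is granted, the argument is purely formal. The only point demanding care is the bookkeeping across the two directions---confirming that the symmetric application of Lemma \ref{lemma:minimumweight} genuinely interchanges the roles of $(Y_1,R_1)$ and $(Y_2,R_2)$, that the minimizers $g, g'$ and their maximizing elements $\hat{x}_2, \hat{x}_1$ remain properly attached to their own directions, and above all that the shared leading term $\max\{\#Y_1,\#Y_2\}$ really does factor out of the outer maximum. It is this last fact that forces both lines into their clean ``$\max$ minus $\min$'' shape rather than a less tidy maximum of two unrelated weights.
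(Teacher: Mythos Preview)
Your proposal is correct and follows essentially the same route as the paper: substitute Lemma \ref{lemma:minimumweight} into the two inner minima of Definition \ref{df:distance}, factor out the common $\max\{\#Y_1,\#Y_2\}$, and convert the outer maximum into $C-\min\{A_1,A_2\}$; the inequality is then obtained by replacing each exact summation with the $\kappa$-bound from Lemma \ref{lemma:minimumweight} and simplifying $\#Y_i-\#Y_i^{\not\pis}=\#Y_i^{\pis}$. The paper's proof is simply this calculation written out line by line.
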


\begin{proof}
  We confirm by calculating:
  \begin{align*}
    d(r_1, r_2) &= \max \{\min_{g} w(g|r_1, r_2), \min_{g'} w(g'|r_2,r_1) \}\\
    &= \max \left \{\begin{array}{l}
      \max \{\#Y_1, \#Y_2 \} -\#Y_1 + \sum_{\sigma\owns \hat{x}_2} \#\{y\in Y_1^{\sigma}\colon g(y)\notin Y_2^{\sigma} \} \\
      \max \{\#Y_1, \#Y_2 \} -\#Y_2 + \sum_{\sigma\owns \hat{x}_1} \#\{y\in Y_2^{\sigma}\colon g'(y)\notin Y_1^{\sigma} \}
    \end{array} \right\}\\
    &= \max \{\#Y_1, \#Y_2 \} + \max \left \{\begin{array}{l}
      (-\#Y_1) + \sum_{\sigma\owns \hat{x}_2} \#\{y\in Y_1^{\sigma}\colon g(y)\notin Y_2^{\sigma} \} \\
      (-\#Y_2) + \sum_{\sigma\owns \hat{x}_1} \#\{y\in Y_2^{\sigma}\colon g'(y)\notin Y_1^{\sigma} \}
    \end{array} \right\}\\
    &= \max \{\#Y_1, \#Y_2 \} - \min \left \{\begin{array}{l}
      \#Y_1 - \sum_{\sigma\owns \hat{x}_2} \#\{y\in Y_1^{\sigma}\colon g(y)\notin Y_2^{\sigma} \} \\
      \#Y_2 - \sum_{\sigma\owns \hat{x}_1} \#\{y\in Y_2^{\sigma}\colon g'(y)\notin Y_1^{\sigma} \}
    \end{array} \right\}\\
    &\le \max \{\#Y_1, \#Y_2 \} -  \min \left \{\begin{array}{l}
      \#Y_1 - \#Y_1^{\not\pis} + \kappa(r_1,r_2) \\
      \#Y_2 - \#Y_2^{\not\pis} + \kappa(r_2,r_1)
    \end{array} \right\}\\
    &= \max \{\#Y_1,\#Y_2\} - \min \left\{ \#Y_1^{\pis} + \kappa(r_1,r_2), \#Y_2^{\pis} + \kappa(r_2,r_1) \right\}. \qedhere
  \end{align*}
\end{proof}

\subsection{General Statement; Interpretation}
\label{sec:general_statement}

Combining Lemmas \ref{lemma:minimumweight} and \ref{lemma:distance}, we can state the general result:

\begin{theorem}
  \label{theorem:bounds}
  The minimum function weight and the distance between two $\relX$ objects $r_1, r_2$ are given by
  \begin{align*}
    \min_{g} w(g|r_1,r_2) &= \max \{\#Y_1, \#Y_2\} - \#Y_1 + \sum_{\sigma\owns \hat{x}} \#\{y\in Y_1^{\sigma}\colon g(y)\notin Y_2^{\sigma}\}\\ &\le \max \{\#Y_1, \#Y_2\} - (\#Y_1^{\pis} + \kappa(r_1,r_2))
  \end{align*}
  and
  \begin{align*} d(r_1, r_2) &= \max \{\#Y_1, \#Y_2 \} - \min \left
    \{\begin{array}{l}
      \#Y_1 - \sum_{\sigma\owns \hat{x}_2} \#\{y\in Y_1^{\sigma}\colon g(y)\notin Y_2^{\sigma} \} \\
      \#_2 - \sum_{\sigma\owns \hat{x}_1} \#\{y\in Y_2^{\sigma}\colon g'(y)\notin Y_1^{\sigma} \}
    \end{array} \right\}\\
    &\le \max \{\#Y_1,\#Y_2\} - \min \left\{ \#Y_1^{\pis} + \kappa(r_1,r_2), \#Y_2^{\pis} + \kappa(r_2,r_1) \right\}
  \end{align*}
\end{theorem}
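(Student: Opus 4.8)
The plan is to recognize that this theorem requires no fresh argument: each of its two displayed claims is a verbatim restatement of a lemma already proved above, so the proof reduces to citing those lemmas and reconciling minor notation. The first display---the exact value of $\min_g w(g|r_1,r_2)$ together with its $\kappa$-based upper bound---is precisely Lemma \ref{lemma:minimumweight}, the only cosmetic difference being that the theorem writes $\hat{x}$ where the lemma writes $\hat{x}_2$ for the element of $X$ maximizing term (B) in the direction $r_1\to r_2$. I would therefore invoke Lemma \ref{lemma:minimumweight} directly, recalling that it itself unifies the morphism case of Proposition \ref{proposition:morphismweight} (where term (B) vanishes because morphisms honor partitions) with the non-morphism case of Proposition \ref{proposition:nonmorphismweight}, and that the inequality is supplied by the $\kappa$ bound of Lemma \ref{lemma:kappabound}.

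For the second display I would cite Lemma \ref{lemma:distance}, whose exact expression for $d(r_1,r_2)$ and whose $\kappa$-based upper bound are reproduced here line for line. The only point to flag is typographical: the ``$\#_2$'' appearing in the theorem's distance display should read $\#Y_2$, so that the two arguments of the inner $\min$ form the symmetric pair $\#Y_1 - \sum_{\sigma\owns\hat{x}_2}\#\{y\in Y_1^{\sigma}\colon g(y)\notin Y_2^{\sigma}\}$ and $\#Y_2 - \sum_{\sigma\owns\hat{x}_1}\#\{y\in Y_2^{\sigma}\colon g'(y)\notin Y_1^{\sigma}\}$ exactly as in Lemma \ref{lemma:distance}. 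That lemma in turn is obtained by substituting the directional minimum weights of Lemma \ref{lemma:minimumweight} (applied to $r_1\to r_2$ and to $r_2\to r_1$) into the outer maximum of Definition \ref{df:distance}, factoring out the common term $\max\{\#Y_1,\#Y_2\}$, and using the elementary identity $\max\{a-b,a-c\} = a-\min\{b,c\}$.

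Because both halves are transcriptions of established results, I expect no genuine obstacle; the ``work'' is expository rather than mathematical, and the proof can be as short as one sentence per display. Were I to make the statement self-contained instead of citing the lemmas, the sole nontrivial ingredient to reproduce would be the passage from the exact summation form of term (B) to its $\kappa$ bound, which is exactly the content of Lemma \ref{lemma:kappabound} and is already absorbed into both Lemma \ref{lemma:minimumweight} and Lemma \ref{lemma:distance}.
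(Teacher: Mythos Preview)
Your proposal is correct and matches the paper's approach exactly: the paper presents Theorem \ref{theorem:bounds} with no proof at all, prefacing it only with the sentence ``Combining Lemmas \ref{lemma:minimumweight} and \ref{lemma:distance}, we can state the general result,'' so your recognition that the theorem is a verbatim collation of those two lemmas (modulo the $\hat{x}$ versus $\hat{x}_2$ cosmetic difference and the $\#_2$ typo for $\#Y_2$) is precisely what the authors intended.
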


When there are morphisms in both directions, the summations in the exact forms of weight and distance vanish, and the distance formula resolves to
\begin{equation*}
  d(r_1,r_2) = \max \{\#Y_1, \#Y_2\} - \min \{\#Y_1, \#Y_2\} = |\#Y_1 - \#Y_2|,
\end{equation*}
consistent with the intuition that the only difference between the $\relX$ objects is the number of elements in the $Y$ sets. Similarly, when a morphism is available only in the direction $r_1\to r_2$, distance resolves to
\begin{align*}
  d(r_1, r_2) &= \max \{\#Y_1, \#Y_2\} - \min \left \{ \#Y_1, \#Y_2 -  \sum_{\sigma\owns \hat{x}_1} \#\{y\in Y_2^{\sigma}\colon g(y)\notin Y_1^{\sigma} \} \right\}\\
  &\le \max \{\#Y_1, \#Y_2\} - \min \left \{ \#Y_1, \#Y_2^{\pis} + \kappa(r_2,r_1) \right\}.
\end{align*}

Comparing the distances when there are morphisms in both directions and when there is one in only one direction, we are led to interpret the quantity $\kappa(r_1, r_2)$ as capping the weight ``penalty'' that non-morphism mappings in a particular direction impose compared to morphisms. Overall, the weight and distance between two arbitrary $\relX$ objects depend on whether and in which direction(s) morphisms exist between them and in each case are bounded above by quantities that depend solely on the relations embedded in the $\relX$ objects and that act to bound the impact of the absence of morphisms.

\begin{example}
  \label{eg:eg_kappa_nomorphs}
  Consider two $\relX$ objects $r_1, r_1$ defined over the same $X$
  and $Y$ but with different relations \begin{align*}
  R_1 & = \begin{pmatrix}
    1 & 0 & 1 & 1 & 0 & 0 & 1 & 1 & 0 & 1\\
    0 & 1 & 0 & 1 & 0 & 0 & 0 & 1 & 1 & 1\\
    1 & 1 & 1 & 0 & 1 & 0 & 0 & 0 & 0 & 1\\
    1 & 1 & 1 & 0 & 1 & 0 & 0 & 0 & 0 & 0\\
    1 & 1 & 1 & 1 & 1 & 0 & 1 & 1 & 1 & 1
  \end{pmatrix}\\
  R_2 & = \begin{pmatrix}
    0 & 0 & 1 & 1 & 0 & 1 & 1 & 1 & 0 & 1\\
    0 & 0 & 1 & 1 & 1 & 1 & 0 & 1 & 1 & 0\\
    1 & 0 & 1 & 1 & 0 & 1 & 1 & 0 & 1 & 1\\
    0 & 1 & 0 & 1 & 0 & 0 & 0 & 1 & 1 & 0\\
    0 & 0 & 0 & 0 & 0 & 1 & 0 & 0 & 1 & 1
  \end{pmatrix}
  \end{align*}
  Sorting their columns
  \begin{align*}
    R_1' &= \left( \begin{matrix}
      0\\
      0\\
      0\\
      0\\
      0
    \end{matrix}\medspace\medspace \rule[-6ex]{1pt}{13.5ex}\medspace\medspace
    \begin{matrix}
      1 & 0 & 1 & 1 & 1 & 0 & 1 & 1 & 0\\
      0 & 1 & 1 & 1 & 1 & 0 & 0 & 0 & 1\\
      0 & 0 & 0 & 0 & 1 & 1 & 1 & 1 & 1\\
      0 & 0 & 0 & 0 & 0 & 1 & 1 & 1 & 1\\
      1 & 1 & 1 & 1 & 1 & 1 & 1 & 1 & 1
    \end{matrix} \right) \\
    &\phantom{=\quad\
    \begin{matrix}
      0 & 1 & 1 & 1 & 1 & 0 & 0 & 0 & 1
    \end{matrix}}\ \thinspace \Updownarrow\\
    R_2' &= \begin{pmatrix}
      0 & 0 & 1 & 1 & 0 & 1 & 1 & 1 & 1 & 0\\
      1 & 0 & 0 & 1 & 0 & 1 & 1 & 0 & 1 & 1\\
      0 & 1 & 1 & 1 & 0 & 0 & 1 & 1 & 1 & 1\\
      0 & 0 & 0 & 0 & 1 & 1 & 1 & 0 & 0 & 1\\
      0 & 0 & 0 & 0 & 0 & 0 & 0 & 1 & 1 & 1
    \end{pmatrix}
  \end{align*}
  we find agreement in only one column (marked by $\Updownarrow$). Exact calculation of distance between the two requires checking all $f\colon Y\to Y$ excluding the one agreeing $y$ in light of both $R_1$ and $R_2$, of which there are $2\times 9^{9} \approx 775$ million. Applying the algorithm, however, inspection of $R_1'$ and $R_2'$ readily demonstrates that we can sort $R_1'$ into two disjoint partition blocks (separated above by a vertical bar) but $R_2'$ cannot be sorted into disjoint partition blocks. Consequently $\kappa(r_1,r_2)=1$ and $\kappa(r_1,r_2)=0$, giving the distance between the two relations
  \begin{equation*}
    d(r_1, r_2) \le \max\{10,10\}-\min\{1 + 0,1 + 1\} = 10 - 0 = 9.
  \end{equation*}
\end{example}

\begin{example}
  \label{eg:eg_kappamorph-nomorph}
  Now consider two $\relX$ objects $r_1, r_2$ defined over the same $X$ and $Y$ but with the following relations that differ only slightly (one change in each of columns 4, 7, 8, and 10)
  \begin{align*}
    R_1 & = \begin{pmatrix}
      0 & 0 & 1 & 0 & 0 & 1 & 0 & 0 & 0 & 1\\
      0 & 0 & 1 & 0 & 1 & 0 & 0 & 0 & 0 & 1\\
      0 & 1 & 0 & 1 & 0 & 0 & 1 & 0 & 1 & 0\\
      0 & 0 & 0 & 0 & 0 & 0 & 0 & 0 & 0 & 0\\
      0 & 1 & 0 & 0 & 0 & 0 & 1 & 0 & 0 & 0
    \end{pmatrix}\\
    R_2 & = \begin{pmatrix}
      0 & 0 & 1 & 0 & 0 & 1 & 0 & 0 & 0 & 1\\
      0 & 0 & 1 & 1 & 1 & 0 & 0 & 0 & 0 & 0\\
      0 & 1 & 0 & 1 & 0 & 0 & 0 & 0 & 1 & 0\\
      0 & 0 & 0 & 0 & 0 & 0 & 0 & 0 & 0 & 1\\
      0 & 1 & 0 & 0 & 0 & 0 & 1 & 1 & 0 & 0\\
    \end{pmatrix}
  \end{align*}
  Sorting them into
  \begin{align*}
    R_1' &= \left( \begin{matrix}
      0 & 0\\
      0 & 0\\
      0 & 0\\
      0 & 0\\
      0 & 0
    \end{matrix} \medspace\medspace \rule[-6ex]{1pt}{13.5ex}\medspace\medspace
    \begin{matrix}
      1 & 0 & 1 & 1\\
      0 & 1 & 1 & 1\\
      0 & 0 & 0 & 0\\
      0 & 0 & 0 & 0\\
      0 & 0 & 0 & 0
    \end{matrix} \medspace\medspace \rule[-6ex]{1pt}{13.5ex}\medspace\medspace
    \begin{matrix}
      0 & 0 & 0 & 0\\
      0 & 0 & 0 & 0\\
      1 & 1 & 1 & 1\\
      0 & 0 & 0 & 0\\
      0 & 0 & 1 & 1
    \end{matrix} \right) \\
    R_2' &= \left( \begin{matrix}
      0\\
      0\\
      0\\
      0\\
      0
    \end{matrix} \medspace\medspace \rule[-6ex]{1pt}{13.5ex}\medspace\medspace
    \begin{matrix}
      1 & 0 & 1 & 0 & 0 & 1 & 0 & 0 & 0\\
      0 & 1 & 1 & 0 & 1 & 0 & 0 & 0 & 0\\
      0 & 0 & 0 & 1 & 1 & 0 & 0 & 0 & 1\\
      0 & 0 & 0 & 0 & 0 & 1 & 0 & 0 & 0\\
      0 & 0 & 0 & 0 & 0 & 0 & 1 & 1 & 1
    \end{matrix} \right)
  \end{align*}
  and removing one-for-one matching columns we find the following column differences with partitions
  \begin{align*}
    R_1' - R_2' &= \left( \begin{matrix}
      0\\
      0\\
      0\\
      0\\
      0
    \end{matrix} \medspace\medspace \rule[-6ex]{1pt}{13.5ex}\medspace\medspace
    \begin{matrix}
      1\\
      1\\
      0\\
      0\\
      0
    \end{matrix} \medspace\medspace \rule[-6ex]{1pt}{13.5ex}\medspace\medspace
    \begin{matrix}
      0 & 0\\
      0 & 0\\
      1 & 1\\
      0 & 0\\
      0 & 1
    \end{matrix} \right)\\
    R_2' - R_1' &= \left( \begin{matrix}
      0\\
      1\\
      1\\
      0\\
      0
    \end{matrix} \medspace\medspace \rule[-6ex]{1pt}{13.5ex}\medspace\medspace
    \begin{matrix}
      1\\
      0\\
      0\\
      1\\
      0
    \end{matrix} \medspace\medspace \rule[-6ex]{1pt}{13.5ex}\medspace\medspace
    \begin{matrix}
      0 & 0\\
      0 & 0\\
      0 & 0\\
      0 & 0\\
      1 & 1
    \end{matrix} \right).
  \end{align*}
  Exact calculation of distance requires considering $2\times 4^{4} \approx 512$ possible mappings. Applying the $\kappa$ algorithm, however, we find the disagreeing columns of both relations can each be partitioned into 2 blocks of 1 column and one block of 2 columns. Since we map 4 disagreeing columns in each direction, we find $\kappa(r_1,r_2) = \kappa(r_2,r_1) = 2$ and an upper bound on the distance between the two relations is
  \begin{equation*}
    d(r_1, r_2) \le \max \{10, 10\} - \min \{6 + 2, 6 + 2 \} = 2.
  \end{equation*}
\end{example}

\section{Conclusion}
\label{sec:conclusion}

Our exploration has demonstrated that the relation multiset $M(X,Y,R)$ is a full and faithful functor from the $\rel$ category of relations to the $\mul$ category of multisets. Given our novel definition of a pseudometric for relations, applying the functor to the subcategory $\relX$ of relations sharing a common feature set $X$ facilitated establishing the $\kappa$ bound on the pseudometric for $\relX$ objects. We also specified an algorithm for calculating that bound that avoids a potentially expensive search of the combinatorial space of potential mappings between relations. The $\kappa$ bound and algorithm can be used to bound differences between binary data sets about common features ($X$), for instance in the search for consensus specifications.

This paper has focused exclusively on simple relations $R\colon X\times Y \to \{0,1\}$, corresponding to binary data about the presence or absence of features. Future work could extend the results to data with partially ordered values about features, $R\colon X\times Y \to P$, where $P$ has a partial order, which is isomorphic to the simple case (\cite[]{robinson_cosheaf_2020}, pp.~12-14).  Another possible extension is to apply the algebra of multisets to calculate distances and bounds on subsets or filtrations of relations, for instance, to distinguish locally similar or different subsets or to reduce computation and identify ``new'' information as data sets accrete or expand over time due to repeated sampling, changed samplers, expanded feature sets, etc.

\section*{Acknowledgments}
This material is based upon work supported by the Defense Advanced Research Projects Agency (DARPA) SafeDocs program under contract HR001119C0072.  Any opinions, findings and conclusions or recommendations expressed in this material are those of the authors and do not necessarily reflect the views of DARPA.  The authors would like to thank the SafeDocs test and evaluation team, including NASA (National Aeronautics and Space Administration) Jet Propulsion Laboratory, California Institute of Technology and the PDF Association, Inc., for providing the test data.

\section*{Conflict of interest}
The authors state that there is no conflict of interest.

\bibliographystyle{plainnat}
\bibliography{relations2_bib}
\end{document}